\newtcolorbox{mybox}{colback=red!5!white,colframe=red!75!black, sharp corners = all}
\def\ucsign{\bcpanchant}
\DeclareMathOperator{\rk}{rk}
\DeclareMathOperator{\ins}{ins}
\DeclareMathOperator{\codim}{codim}
\DeclareMathOperator{\Span}{span}
\DeclareMathOperator{\diag}{diag}
\DeclareMathOperator{\colsp}{\mathsf C}
\DeclareMathOperator{\rowsp}{\mathsf R}
\newcommand{\m}{\mathbf M}
\def\F{{\mathbb F}}
\def\K{{\mathsf K}}
\def\M{{\mathcal M}}
\def\N{{\mathbb N}}
\def\x{{\mathsf x}}
\def\SCS{{\mathbf S}}
\let\det\undefined
\DeclareMathOperator{\det}{det}
\DeclareMathOperator{\sgn}{sgn}
\DeclareMathOperator{\AS}{\mathcal{S}}
\newenvironment{psmallmatrix}
  {\left(\begin{smallmatrix}}
  {\end{smallmatrix}\right)}
\theoremstyle{plain}
\newtheorem{theorem}{Theorem}[section]
\newtheorem*{theorem*}{Theorem}
\newtheorem{lemma}[theorem]{Lemma}
\newtheorem*{lemma*}{Lemma}
\newtheorem*{proposition*}{Proposition}
\newtheorem{corollary}[theorem]{Corollary}
\newtheorem*{corollary*}{Corollary}
\theoremstyle{definition}
\newtheorem{definition}[theorem]{Definition}
\newtheorem*{counterexample*}{Counterexample}
\newtheorem{remark}[theorem]{Remark}
\newtheorem{note}[theorem]{Note}
\numberwithin{equation}{section}
\begin{document}

\title{Linear varieties and matroids with applications to the Cullis' determinant}

\author[1]{Alexander Guterman}
\author[2,*]{Andrey Yurkov}
\affil[1,2]{Department of Mathematics, Bar Ilan University, Ramat-Gan, 5290002, Israel}
\affil[*]{Corresponding author: Andrey Yurkov, andrey.yurkov@biu.ac.il}

\maketitle

\begin{abstract}
Let $V$ be a vector space of rectangular $n\times k$ matrices annihilating the Cullis' determinant. We show that $\dim(V) \le (n-1)k$, extending the Dieudonn{\'{e}}'s result on the dimension of vector spaces of square matrices annihilating the ordinary determinant.

Furthermore, for certain values of $n$ and $k$, we explicitly describe such vector spaces of maximal dimension. Namely, we establish that if $k$ is odd, $n \ge k + 2$  and $\dim(V) = (n-1)k$, then $V$ is equal to the space of all $n\times k$ matrices whose alternating row sum is equal to zero.

To obtain these results, we provide and use a notion of matroid corresponding to a given linear variety which properties may have an independent interest. In particular, we establish that if the linear variety is transformed by projections and restrictions, then the behaviour of the corresponding matroid is expressed in the terms of matroid contraction and restriction. 

In addition, we show that if $M$ is a matroid, $I^*$ its coindependent set $M|S$ and its restriction  on a set $S$, then the union of $I^*\setminus S$ with every cobase of $M|S$ is coindependent set of $M$. We employ this fact in our proofs and anticipate its utility in other studies involving matroid theory.
\end{abstract}

\textbf{Keywords:} linear varieties, matroids, rectangular matrices, Cullis' determinant

\section{Introduction}\label{sec:intro}

The determinant of a matrix is used in many fields of mathematics and its applications. Its study has a long and rich history, with contributions from mathematicians across the world since ancient times.

One of the subjects in the investigations of the determinant is describing the structure of spaces of matrices annihilating it,
i.e., vector spaces $V \subseteq \M_{n}(\F)$ such that $\det (X) = 0$ for all $X \in V$. Dieudonn{\'{e}} in~\cite{Dieudonne1948} obtained the sharp bound for the dimension of matrix space having this property and established that every such space of maximal possible dimension should be a left or right maximal ideal in the ring of all  matrices. 

\begin{theorem}[{\cite[Th\'{e}or\`{e}me~1]{Dieudonne1948}}]\label{thm:Dieudonne}Assume that $n \in \N$ and $\F$ is a field such that $(|\F|, n) \neq (2,2)$. Let $V \subseteq \M_n(\F)$ be a vector space, $A \in \M_{n\,k}(\mathbb F)$ and $\K = \{A\} + V$. Then the following statements hold:
\begin{enumerate}
\item If $\det(X) = 0$ for all $X \in \K$, then $\dim (V) \le n^2 - n$.
\item If  $\det(X) = 0$ for all $X \in \K$ and $\dim (V) = n^2 - n$,  then either the nullspaces of the matrices from $\K$ have a common nonzero vector, or the same is true for $\K^t$.
\end{enumerate}
\end{theorem}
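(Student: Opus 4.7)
My plan is to proceed by induction on $n$. The base case $n = 1$ is immediate, since the only singular $1 \times 1$ matrix is $0$, so $V = \{0\}$ and $\dim V = 0 = 1^2 - 1$.

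For the inductive step I would exploit the action $\K \mapsto P \K Q$ with $P, Q$ invertible, which preserves both $\dim V$ and the property that every element is singular. Since $A \in \K$ is singular, $r := \rk(A) \le n - 1$, and I would choose $A$ of maximal rank among elements of $\K$ and normalize so that $A = \diag(I_r, 0)$. For any $Y \in V$, the entire line $A + tY$ lies in $\K$, so the polynomial $p_Y(t) := \det(A + tY) \in \F[t]$ satisfies $p_Y(t) = 0$ for every $t \in \F$. Since $\deg p_Y \le n$, the hypothesis $(|\F|, n) \ne (2, 2)$ ensures---either because $|\F| > n$, or by a careful analysis of low-order coefficients when $|\F| \le n$---that every coefficient of $p_Y$ must vanish as $Y$ varies over $V$.

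The central computation is via Schur complement: writing $Y = \begin{psmallmatrix} B & C \\ D & E \end{psmallmatrix}$ matching the block form of $A$, one has for generic $t$
\[
\det(A + tY) = t^{n-r} \det(I_r + tB) \det\bigl(E - t D (I_r + tB)^{-1} C\bigr),
\]
so the coefficient of the lowest power $t^{n-r}$ of $p_Y$ is $\det(E)$. When $r = n - 1$, $E$ is a scalar, and this becomes the linear constraint $Y_{nn} = 0$ on every $Y \in V$. Extracting higher-order coefficients of $p_Y$ (involving $B$, $C$, $D$) and repackaging the resulting block relations as a singular-matrix problem on $(n - 1) \times (n - 1)$ matrices via a Schur-complement quotient lets me invoke the inductive hypothesis and obtain $\dim V \le n^2 - n$.

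For Part~2, I would track equality cases through the induction. When $\dim V = n^2 - n$ each inductive reduction must be tight, which forces the linear constraints derived from $p_Y \equiv 0$ to take the sharp form ``an entire row of $Y$ vanishes'' or ``an entire column of $Y$ vanishes''. Unwinding the transformations $P$ and $Q$, the column-vanishing case produces a common right null vector for every $X \in \K$, while the row-vanishing case produces a common right null vector for every $X^t$---exactly the stated dichotomy. The main obstacle will be ensuring that at equality one can always arrange $r = n - 1$: for $r < n - 1$ the coefficient $\det(E)$ is a nonlinear (degree-$(n-r)$) constraint rather than a linear one, and one has to rule out such configurations from saturating $\dim V = n^2 - n$, which requires a rank-selection argument within $\K$ coupled to the inductive count. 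Separately, the exclusion $(|\F|, n) = (2, 2)$ is needed because over $\F_2$ with $n = 2$ a polynomial of degree $\le 2$ can vanish at both field elements without being the zero polynomial, so the polynomial-identity argument above degenerates at this corner and must be treated by hand.
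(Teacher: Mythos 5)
This statement is quoted from Dieudonn\'e's 1948 paper and is not proved anywhere in the present article, so there is no in-paper argument to compare yours against; I can only assess your sketch on its own terms. The skeleton (induction on $n$, normalizing a maximal-rank element of $\K$ to $\diag(I_r,0)$, and reading off constraints from the coefficients of $\det(A+tY)$) is a reasonable and classical line of attack, and your Schur-complement identity and the identification of the coefficient of $t^{n-r}$ with $\det(E)$ are correct.

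The genuine gap is the step ``every coefficient of $p_Y$ must vanish.'' Restricting to the line $A+tY$ only tells you that a polynomial $p_Y$ of degree at most $n$ vanishes at every point of $\F$; when $|\F|\le n$ this does not force $p_Y=0$ (over $\F_2$ with $n=3$, for instance, $t^2+t$ vanishes identically on the field without being the zero polynomial). Since the theorem excludes only the single pair $(|\F|,n)=(2,2)$, your argument must work for $\F_2$ with $n\ge 3$, for $\F_3$ with $n\ge 3$, and so on, where the line-restriction argument yields essentially nothing; this is precisely where the substance of Dieudonn\'e's proof lies, and deferring it to ``a careful analysis of low-order coefficients'' leaves the hard case unproved. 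Any correct treatment of small fields has to exploit the vanishing of $\det$ on the whole affine space $\K$ (evaluating on well-chosen finite families of elements of $\K$, not merely on lines through $A$). A second, smaller gap: already in Part~1 you must deal with $r<n-1$, where the extracted constraint $\det(E)=0$ is a polynomial condition of degree $n-r\ge 2$ on $V$ rather than a linear one, so it does not feed into a codimension count; you flag this difficulty only for Part~2, but as written it also blocks the dimension bound in Part~1.
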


This allowed Dieudonn{\'{e}} to provide a description of linear maps preserving the matrix determinant without any restrictions on ground field~\cite[Th\'{e}or\`{e}me~3]{Dieudonne1948}.

Since the ordinary determinant is defined only for the square matrices, there were made several attempts to extend this notion to the rectangular matrices. One of such extension is due to Cullis who introduced the concept of determinant (he called it \emph{determinoid}) of a rectangular matrix in his monograph~\cite{cullis1913} which can be expressed as an alternating sum of its maximal minors (see Definition~\ref{def:CullisDet}) and is denoted by $\det_{n\,k}$. Several properties known for the classical determinant were studied and shown to be valid for the Cullis' determinant in~\cite[\textsection 5, \textsection 27, \textsection 32]{cullis1913}. 

In 1966 Radi\'{c}~\cite{radic1966} independently proposed an equivalent definition of the Cullis determinant. He also provided in~ \cite{radic2005, susanj1994} its geometrical applications to the calculating areas of polygons on the plane and volumes of certain polyhedrons in space. According to that, it is also sometimes called \emph{Radi\'{c}'s determinant}~\cite{amiri2010} or \emph{Cullis-Radi\'{c} determinant}~\cite{makarewicz2014,makarewicz2020} in some papers. 

Nakagami and Yanai in~\cite{NAKAGAMI2007422} formulate the definition of the Cullis' determinant in the terms of vectors in a Grassmann algebra and provide a definition of the Cullis' determinant through several characteristic properties. For this reason they consider $n\times k$ matrices only with $n \ge k$, whereas the original definition covers rectangular matrices of any size. But according to it, the Cullis determinant of $n \times k$ matrix with $n < k$ is equal to the Cullis determinant of its transpose and consequently it is sufficient to consider only the case $n \ge k$. 

Since the Cullis' determinant is a generalization of the ordinary discriminant, it is natural to ask whether Theorem~\ref{thm:Dieudonne} admits a suitable extension. In this paper the authors make the first steps in that direction of research. In particular, we obtain a generalization of the Dieudonn{\'{e}}'s theorem for the Cullis' determinant to the case where $n \ge k + 2$ and $k$ is odd. The main result of this paper is formulated as follows.

\begin{theorem}[See~Theorem~\ref{thm:MaxDimKEvenAltSumZero} of this text]\label{thm:mainthm1}Let $n \ge k + 2$ and $V \subseteq \M_{n\,k}(\mathbb F)$ be a vector space $A \in \M_{n\,k}(\mathbb F)$ and $\K = \{A\} + V$. Then $\det_{n\,k} (X) = 0$ for all $X \in \K$ and $\codim(\K) = k$ if and only if $k$ is odd and alternating row sum of every $X \in \K$ is equal to zero.
\end{theorem}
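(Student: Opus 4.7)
I would prove the biconditional by addressing the two implications separately; the reverse (if) direction is essentially a direct computation, while the forward (only if) direction relies on the matroid framework developed earlier in the paper.

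\textbf{The (if) direction.} Assume $k$ is odd and that the alternating row sum vanishes on every $X \in \K$. The map $X \mapsto \sum_{i=1}^n (-1)^{i-1} X_{i,\bullet}$ is a surjective linear map $\M_{n,k}(\F) \to \F^k$, so its kernel has codimension exactly $k$; combined with the hypothesis on $\K$ this forces $V$ to equal this kernel and $\codim \K = k$. To verify $\det_{n,k}(X) = 0$, I would expand via Corollary~\ref{cor:CullisAsSumDet} as $\det_{n,k}(X) = \sum_{|I|=k,\, I \subseteq [n]} \epsilon_I \det(X_I)$, substitute the relation $X_{n,\bullet} = \sum_{i<n}(-1)^{n-i} X_{i,\bullet}$ into each minor that involves row $n$, and apply multilinearity and antisymmetry of the ordinary $k \times k$ determinant. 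After regrouping, the coefficient of each ordinary minor $\det(X_J)$ with $J \subseteq [n-1]$ of size $k$ is a signed sum which vanishes precisely because $k$ is odd. The pattern is already visible for $k=3, n=4$, and a parallel $k=2$ computation confirms that the parity hypothesis is essential.

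\textbf{The (only if) direction.} Assume $\codim \K = k$ and $\det_{n,k} \equiv 0$ on $\K$. I would pass to the matroid $M$ on the entry ground set $[n] \times [k]$ attached to the linear variety $\K$ in the manner promised by the abstract. The Cullis expansion of Corollary~\ref{cor:CullisAsSumDet} encodes the annihilation of the determinant as a prescribed system of dependencies among the $k \times k$ minors, while $\codim \K = k$ forces $M$ to be extremal with respect to the dual-rank invariant responsible for the bound $\dim V \le (n-1)k$. Applying the projection/restriction correspondence together with the coindependent-set lemma from the abstract, I would analyse the maximal coindependent sets of $M$ column by column, propagating the constraints via the cobase-union construction. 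The hypothesis $n \ge k + 2$ supplies two ``spare'' rows beyond the $k$ needed for a minor, which is what allows the projection arguments to isolate a single extremal configuration---the alternating row sum annihilator---and to rule out any row-space/column-space analogue of the Dieudonn\'{e} ideals.

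\textbf{Main obstacle.} Extracting the parity constraint ``$k$ odd'' from the matroid analysis is the most delicate point. The natural route is to observe that for even $k$ the residue computed in the (if) direction is a nonvanishing alternating combination of $k \times k$ minors of $X_1, \ldots, X_{n-1}$, which obstructs any $(n-1)k$-dimensional annihilator; marrying this analytic obstruction to the purely combinatorial matroid argument, without losing the uniqueness of the extremal $V$, is where the technical bulk of the proof is likely to reside. A secondary difficulty is handling the affine (translation) part $A$: the coindependent-set analysis naturally applies to the underlying vector space $V$, so one must argue separately that $A$ itself lies in the alternating row sum annihilator, which should follow by substituting $X = A$ into the extremal matroid description.
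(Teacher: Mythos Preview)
Your (if) direction is correct and matches the paper's approach (encapsulated there as Lemma~\ref{thm:RowVarDetNKZeroCond}): expand via Corollary~\ref{cor:CullisAsSumDet}, eliminate one row using the alternating relation, and check that the coefficient of each surviving $k\times k$ minor is zero precisely when $k$ is odd.

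Your (only if) direction points at the right toolbox but misses the actual architecture of the argument, and your diagnosis of the ``main obstacle'' is inverted. The matroid analysis does \emph{not} directly produce the alternating-sum annihilator or the parity constraint. What it produces---and this is the real heart of the proof, Lemma~\ref{lem:ALSDetNKZeroDimKThenExistVec}---is the bare existence of \emph{some} nonzero $\mathbf z\in\F^n$ with $\mathbf z^t X=0$ for all $X\in\K$ (and that $\K$ is in fact a vector space, not just an affine variety). The structure is an induction on $k$: one shows (P1) every cobasis of $\m(\K)$ meets every column, and (P2) the row relation exists; the inductive step feeds (P2) at level $k-1$ into (P1) at level $k$ via the Laplace-expansion ``striking out'' of Lemma~\ref{lem:StrikingOutLifting}, and (P1)$\Rightarrow$(P2) at each level is a separate argument about the shape of a reduced equational representation. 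Your ``column by column'' phrasing and the emphasis on ``two spare rows'' do not capture this induction; in particular you never state the intermediate target ``there exists $\mathbf z$''.

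Once $\mathbf z$ is in hand, determining that $\mathbf z$ must be alternating and that $k$ must be odd is not the delicate part---it is a direct computation (Lemma~\ref{thm:RowVarDetNKZeroCond} again), comparing the identity $1+\sum_\alpha(-1)^{\alpha-c(\alpha)}\mathbf z_{c(\alpha)}=0$ across two $c$'s that differ in a single element (this is where $n\ge k+2$ is actually used, to guarantee such pairs exist). So the parity constraint falls out of the same calculation you did for the (if) direction, not from the matroid combinatorics. Your secondary difficulty about the affine shift $A$ is real but minor; the paper dispatches it at the very end of Lemma~\ref{lem:ALSDetNKZeroDimKThenExistVec} by exhibiting an explicit $X\in\K$ with $\det_{n,k}(X)\ne0$ if the constant term were nonzero.
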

Thus, this theorem implies that if $n \ge k + 2$ and $k$ is odd, there is a \textbf{unique} vector subspace of $\M_{n\,k}(\F)$ of maximal dimension annihilating $\det_{n\,k}$. 

The argumentation presented in our paper is similar to that used by Dieudonn{\'{e}} to prove Theorem~\ref{thm:Dieudonne}. The main difference is that the authors rely on the following considerations from the matroid theory. First, we provide a notion of matroid corresponding to a given linear variety (Definition~\ref{def:matLinVar}). Second, we prove that if the linear variety is transformed by projections and restrictions, then the behaviour of the corresponding matroid is expressed in the terms of matroid contraction and restriction (Lemma~\ref{lem:ALSLifting} and Lemma~\ref{lem:LinVarSecProj}). Third, we establish that if $M$ is a matroid, $I^*$ its coindependent set $M|S$ and its restriction  on a set $S$, then the union of $I^*\setminus S$ with every cobase of $M|S$ is coindependent set of $M$ (Lemma~\ref{lem:MatroidCobaseRestriction}).

Note that these observations may be useful for solving other similar problems. 
It is also worth to mention that the considerations from the matroid theory were already used to solve the problems of such kind. For example, Meshulam in~\cite{Meshulam2002} applies the Lov\'asz Matroid Parity Theorem to find the maximal  dimension of vector space $V$ consisting of skew-symmetric matrices such that every element of $V$ is a singular matrix. 

Similarly to Theorem~\ref{thm:Dieudonne}, Theorem~\ref{thm:mainthm1} finds its applications in studying linear maps that preserve the Cullis' determinant. Using this theorem, it is possible to obtain a description of linear maps preserving the Cullis' determinant of $n\times k$ rectangular matrices for the case where $k = 3$ and $n \ge 5$, see~\cite{Guterman2025c}. 

In addition to finding a precise description of vector spaces of matrices of size $n\times k$ annihilating the Cullis' determinant for $k$ odd we also provide the following upper bound of the dimension of such spaces for arbitrary $k$.

\begin{theorem}[Cf.~Theorem~\ref{thm:ALSDetNKZeroCodimGEK}]\label{thm:mainthm2} Let $V \subseteq \M_{n\,k}(\F)$ be a vector space, $A \in \M_{n\,k}(\mathbb F)$ and $\K = \{A\} + V$ be such that $\det_{n\,k} (X) = 0$ for every $X \in \K.$ Then $\codim (V) \ge k.$
\end{theorem}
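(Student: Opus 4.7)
The plan is induction on $k$, combined with the Laplace-type expansion of $\det_{n,k}$ and the matroid framework developed earlier in the paper.

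In the base case $k = 1$, the Cullis determinant $\det_{n,1}$ is (up to sign) the alternating row sum, a nonzero linear functional on $\M_{n,1}(\F)$, so any affine variety in its zero locus has codimension at least $1$. For the inductive step I assume the statement for $\det_{n', k-1}$ with every $n'$, and prove it for $\det_{n,k}$. Given $\K = A + V$ annihilating $\det_{n,k}$, the polynomial identity $\det_{n,k}(A+Y) \equiv 0$ in $Y \in V$ decomposes by degree, and its top-degree homogeneous component yields $\det_{n,k}(Y) = 0$ for every $Y \in V$, so I reduce to $A = 0$. Suppose for contradiction that $\codim V < k$.

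The key tool is Laplace expansion of $\det_{n,k}$ along the last column,
\[
\det_{n,k}(X) = \sum_{i=1}^n (-1)^{i-1}\, x_{i,k}\, \det_{n-1,k-1}\bigl(X[\hat i \mid \hat k]\bigr),
\]
together with the fact that $\det_{n,k}$ is multilinear in the columns of $X$. Writing $\pi'$ for the projection $\M_{n,k} \to \M_{n,k-1}$ forgetting the last column and $W := \pi_k(V \cap \ker \pi') \subseteq \F^n$ for the ``free'' last-column directions, multilinearity decouples the last column: for every $X' \in \pi'(V)$ and every $w \in W$ the concatenation $(X' \mid w) \in \M_{n,k}(\F)$ satisfies $\det_{n,k}(X' \mid w) = 0$. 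Expanding by Laplace along the last column of $(X' \mid w)$ yields
\[
\sum_{i=1}^n (-1)^{i-1}\, w_i\, \det_{n-1,k-1}\bigl(X'[\hat i]\bigr) = 0, \qquad X' \in \pi'(V),\ w \in W.
\]

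I split into cases according to the dimension of $W$. When $W$ is a proper subspace of $\F^n$, a dimension count using $\dim V = \dim \pi'(V) + \dim W$ combined with carefully chosen $w \in W$ (selected via the matroid restriction $M_V \mid ([n]\times\{k\})$ afforded by Lemma~\ref{lem:LinVarSecProj}) reduces the problem to an application of the inductive hypothesis to a suitable $(n,k-1)$- or $(n-1,k-1)$-projection of $\pi'(\K)$, from which $\codim V \geq k$ follows by bookkeeping. When $W = \F^n$, the displayed system collapses to $\det_{n-1, k-1}(X'[\hat i]) = 0$ for every $i \in [n]$ and every $X' \in \pi'(V)$, so every row-deletion projection of $\pi'(V)$ annihilates $\det_{n-1, k-1}$, and one must accumulate the resulting row-by-row codimension bounds into a single bound on $V$.

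The hard part will be this extremal case $W = \F^n$: naive bookkeeping based on any single row deletion gives only $\codim V \geq k-1$. Here I expect to prove the auxiliary statement that any $V_0 \subseteq \M_{n, k-1}(\F)$ with $\det_{n-1, k-1}(X'[\hat i]) = 0$ for every $i \in [n]$ and every $X' \in V_0$ has $\codim V_0 \geq k$, a statement strictly stronger than the inductive hypothesis applied to $V_0$. This is the place where Lemma~\ref{lem:MatroidCobaseRestriction} enters most decisively: lifting cobases of the column-restricted matroids $M_V \mid ([n] \times \{j\})$ back to coindependent sets of $M_V$ provides the extra unit of codimension needed to cross from $k-1$ to $k$.
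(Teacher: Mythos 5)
Your overall strategy (induction on $k$ via a Laplace expansion along one column) is the right one and matches the paper's, but the execution has three genuine gaps. First, the reduction to $A=0$ by ``extracting the top-degree homogeneous component'' of $\det_{n\,k}(A+Y)=0$ is invalid in general: it requires interpolating the degree-$\le k$ polynomial $\lambda\mapsto\det_{n\,k}(A+\lambda Y)$ at more than $k$ points, which fails when $|\F|\le k$, and the theorem carries no restriction on $\F$. The paper never makes this reduction; it works with the affine variety $\K$ throughout (via the matroid $\m(\K)$ of the associated subspace), and the conclusion $\codim(\K)\ge k$ is proved directly for linear varieties.

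Second, neither branch of your case split on $W=\pi_k(V\cap\ker\pi')$ is actually closed. When $W\subsetneq\F^n$ you gain $\codim(W)\ge 1$ (or $\ge n$ if $W=0$), but you then need $\codim(\pi'(V))\ge k-1$, and $\pi'(V)$ is not known to annihilate $\det_{n\,k-1}$, nor do its row-deletions annihilate $\det_{n-1\,k-1}$ when the available vectors $w\in W$ have several nonzero coordinates; ``carefully chosen $w$'' and ``bookkeeping'' do not bridge this. When $W=\F^n$ you explicitly need an auxiliary statement strictly stronger than your inductive hypothesis and only say you ``expect to prove'' it, so the induction does not close as set up. The paper avoids both problems with a single mechanism you are missing: assuming $\codim(\K)<k$, a cobasis $B^*$ of $\m(\K)$ has fewer than $k$ elements, so some column $[n]\times\{j'\}$ is disjoint from $B^*$ and hence is an independent set of $\m(\K)$; by Corollary~\ref{cor:IndepEveryVal} one may then slice $\K$ by prescribing that column to equal a standard basis vector $\mathbf e_{i'}$ (with $i'$ a row containing an element of $B^*$), strike out row $i'$ and column $j'$, and Lemma~\ref{lem:StrikingOutLifting} -- resting on the restriction/contraction bookkeeping of Corollary~\ref{lem:MatroidCobaseContractionRestriction} -- guarantees the resulting $(n-1)\times(k-1)$ linear variety annihilates $\det_{(n-1)\,(k-1)}$ and has codimension at most $\codim(\K)-1<k-1$, contradicting the inductive hypothesis. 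Your dichotomy on $W$ (a condition on kernel directions of the last column) is not the relevant one; the relevant condition is surjectivity of the projection of $\K$ onto some column, which is exactly what $\codim(\K)<k$ supplies.
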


Here $\codim (V)$ denotes a \emph{codimension} of vector space $V \subseteq \M_{n\,k}(\F)$, which is defined by $\codim(V) = \dim\left(\M_{n\,k}(\F)\right) - \dim (V) = n\cdot k - \dim (V).$

Theorem~\ref{thm:mainthm1} and Theorem~\ref{thm:mainthm2} imply in particular that if $k$ is even and $n \ge k + 2$, then the codimension of any vector space of matrices of size $n\times k$ annihilating the Cullis' determinant is strictly greater than $k$. This is the sharpest bound known by authors at present, and the question of its exact value remains open. 


In accordance with the text above, the two cases ($n = k$ and $n = k + 1$) remain undiscussed. The case if $n = k$ is completely covered by Theorem~\ref{thm:Dieudonne} because the Cullis' determinant of square matrix is equal to the ordinary determinant. The case $n = k + 1$, in turn, will be considered in the separate paper because it is carried out using different methods.

It is worth mentioning that Theorem~\ref{thm:Dieudonne} admits other generalizations. For example, Pazzis in~\cite{DESEGUINSPAZZIS2025393} provides a description of vectors spaces of matrices of size $n \times k$ (where $n \ge k$) of maximal dimension which do not contain a matrix of rank $k$. Unfortunately, it is not possible to apply his results directly to the spaces of matrices annihilating the Cullis' determinant because there exist matrices of rank $k$ belonging to $\M_{n\,k}(\F)$ such that their Cullis' determinant is zero. For example, this is true for the matrix $E_{1\,1} + \ldots + E_{k\,k} + E_{(k+1)\, k}$. 

This paper is organized as follows. In Section~\ref{sec:notation} we provide the necessary notations; in Section~\ref{sec:prelimlinal} we provide an explanation of all necessary facts from the theory of linear varieties; Sections~\ref{sec:prelimmatroid} and~\ref{sec:prelimcul} contain the preliminary facts from the matroid theory and the theory of the Cullis' determinant, correspondingly; in Section~\ref{sec:matroidlinvar} we introduce the matroid corresponding to a given linear variety, study the behaviour of this matroid while the corresponding linear variety is transformed using different operations and establish the relationship between matroids corresponding to linear varieties and vector matroids corresponding to matrices; Section~\ref{sec:linvarsann} is devoted to investigations of linear varieties of matrices annihilating the Cullis' determinant and contains the proofs of main results of this paper; in Section~\ref{sec:furtherwork} we discuss the possible further work. 

\section{Notation and basic definitions}\label{sec:notation}

By $\F$ we denote a field without any restrictions on its characteristic and cardinality. 

We denote by $\M_{n\, k}(\F)$ the set of all $n\times k$ matrices with the entries from a certain field $\F.$ In addition, if $A$ and $B$ are finite sets, then $\M_{A\, B}(\F)$ denotes the set of all $|A|\times |B|$ matrices with the entries from $\F$ whose rows and columns are indexed by the elements of $A$ and $B$, respectively. $O_{n\, k}\in \M_{n\, k}(\F)$ denotes the matrix with all the entries equal to zero. $I_{n\, n} = I_{n} \in \M_{n\, n}(\F)$ denotes an identity matrix. Let us denote by $E_{ij} \in \M_{n\, k}(\mathbb F)$ a matrix, whose entries are all equal to zero besides the entry on the intersection of the $i$-th row and the $j$-th column, which is equal to one. By $x_{i\, j}$ we denote the element of a matrix $X$ lying on the intersection of its $i$-th row and $j$-th column. For a set $S$ and $i, j \in S$ by $\delta_{i\,j}$ we denote a \emph{Kronecker delta} of $i$ and $j$, which is equal to $1$ if $i = j$ and is equal to $0$ otherwise. If $A \in \M_{n\,k}(\F),$ then by $\rowsp (A) \subseteq \F^k$ and $\colsp (A) \subseteq \F^n$ we denote \emph{a row space of} $A$ and \emph{a column space of} $A$, correspondingly.

For $A \in \M_{n\,k_1}(\mathbb F)$ and $B \in \M_{n\,k_2}(\mathbb F)$ by $A|B \in \M_{n\,k_1+k_2}(\mathbb F)$ we denote a block matrix defined by $A|B = \begin{pmatrix} A & B\end{pmatrix}$.

For $A \in \M_{n\,k}(\mathbb F)$ by $A^t \in \M_{k\,n}(\mathbb F)$ we denote a transpose of the matrix $A$, i.e. $A^{t}_{i\,j} = A_{j\,i}$ for all $1 \le i \le k, 1 \le j \le n$.

We use the notation for submatrices following~\cite{Minc1984} and~\cite{Pierce1979}. That is, by $A[J_1|J_2]$ we denote the $|J_1| \times |J_2|$ submatrix of $A$ lying on the intersection of rows with
the indices from the set $J_1$ and the columns with the indices from the set $J_2$. By $A(J_1|J_2)$ we denote a submatrix of $A$ derived from it by striking out from it the rows with indices belonging to $J_1$ and the columns with the indices belonging to $J_2$. If the set is absent, then it means empty set, i.e. $A(J_1|)$ denotes a matrix derived from $A$ by striking out from it the rows with indices belonging to $J_1$. We may skip curly brackets, i.e. $A[1,2|3,4] = A[\{1,2\}|\{3,4\}]$. The notation with mixed brackets is also used, i.e. $A(|1]$ denotes the first column of the matrix $A$. This notation is also used for vectors as well. In this case vectors are considered as $n\times 1$ or $1 \times n$ matrices. 

We use the bold font to denote vectors and lower indices to denote their coordinates. In the case if we need the series of vectors, we use the upper indices placed in braces. For example, if $\mathbf v = \begin{pmatrix}1 \\ 0\end{pmatrix},$ then $\mathbf v^t = \begin{pmatrix}1 & 0\end{pmatrix}$ and $\mathbf v_1 = 1$. If $\mathbf u^{(1)} = \begin{pmatrix}1 \\ 0\end{pmatrix}, \mathbf u^{(2)} =  \begin{pmatrix}0 \\ 1\end{pmatrix}$, then $\mathbf u^{(1)}_1 = 1$ and $\mathbf u^{(2)}_1 = 0$.

\section{Preliminaries from the theory of linear varieties}\label{sec:prelimlinal}
In order to use the matroid theory we provide a consistent explanation of the theory of linear varieties (Definition~\ref{def:linvty}) and the relationship between their equational representation (Definition~\ref{def:LinVarEqRepr}) following~\cite{Friedberg2002-wu}, \cite{gruenberg_linear_1977} and~\cite{Dieudonne1948}.

\begin{definition}[{\cite[Observation~3 on p.~8]{Tarrida2011-ts}}]
\label{def:linvty}Let $V \subseteq W$ be  vector spaces. Then a set 
\[
\K = \{\mathbf s\} + V = \{\mathbf s + \mathbf v \colon \mathbf v \in V\}
\]
is called \emph{a linear variety}. 
 In particular, every vector subspace of $W$ is a linear variety. Linear varieties are also called~\emph{translated subspaces} (e.g.~\cite[\S 2.1, Definition of p.~15]{gruenberg_linear_1977}).

\end{definition}

We  extensively use the properties of linear varieties in the next sections and therefore revise them in some detail. The main reason why we cannot use only the theory of vector spaces is the induction step in the proof of Lemma~\ref{lem:ALSDetNKZeroDimKThenExistVec}, where we yield one linear variety from another by equating some coordinates of its elements to nonzero elements of $\F$.

\begin{lemma}[Cf.~{\cite[\S 2.1, Lemma~1 on p.~15]{gruenberg_linear_1977}}]\label{lem:LinearVarietyShift}The following statements are equivalent:
\begin{enumerate}[label=(\arabic*), ref=(\arabic*)]
\item $\mathbf s + V = \mathbf s' + V$;
\item $\mathbf s' \in \mathbf s + V$. 
\end{enumerate}
\end{lemma}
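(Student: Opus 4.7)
The plan is to prove the two implications separately, relying only on the fact that $V$ is a vector subspace; in particular, $\mathbf 0 \in V$ and $V$ is closed under addition and additive inverses. No tool beyond these basic properties is needed.

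For (1) $\Rightarrow$ (2) the argument is essentially immediate: since $\mathbf 0 \in V$, I would note that $\mathbf s' = \mathbf s' + \mathbf 0 \in \mathbf s' + V$, and by hypothesis this set coincides with $\mathbf s + V$, hence $\mathbf s' \in \mathbf s + V$.

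For (2) $\Rightarrow$ (1) I would start by writing $\mathbf s' = \mathbf s + \mathbf v_0$ for some fixed $\mathbf v_0 \in V$, and then establish the two inclusions. Every element of $\mathbf s' + V$ has the form $\mathbf s' + \mathbf v = \mathbf s + (\mathbf v_0 + \mathbf v)$, which belongs to $\mathbf s + V$ by closure of $V$ under addition, yielding $\mathbf s' + V \subseteq \mathbf s + V$. For the reverse inclusion, I would use that $\mathbf s = \mathbf s' + (-\mathbf v_0)$ with $-\mathbf v_0 \in V$, so the same calculation with the roles of $\mathbf s$ and $\mathbf s'$ interchanged gives $\mathbf s + V \subseteq \mathbf s' + V$.

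There is no substantive obstacle here; the lemma is a warm-up assertion that two linear varieties with the same direction $V$ coincide exactly when they share a point. The only detail to keep in mind is that both closure under addition and existence of inverses in $V$ are required for the reverse inclusion in the second implication, and that this symmetry between $\mathbf s$ and $\mathbf s'$ (mediated by $\pm\mathbf v_0$) is what makes the two inclusions formally identical.
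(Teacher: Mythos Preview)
Your proof is correct and is the standard argument; the paper itself does not prove this lemma but simply cites the reference, so there is nothing to compare against.
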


\begin{lemma}[Cf.~{\cite[\S 2.1, Lemma~2 on p.~16]{gruenberg_linear_1977}}]\label{lem:LinearVarsEqual} If $\mathbf s + V = \mathbf s' + V'$, then $V = V'$.
\end{lemma}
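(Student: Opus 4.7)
The plan is to establish the two inclusions $V \subseteq V'$ and $V' \subseteq V$ separately; by the symmetry of the hypothesis only one of them needs a detailed argument. The key observation is that the translation vector $\mathbf s - \mathbf s'$ will turn out to lie in both $V$ and $V'$, and from this the equality of the two subspaces will follow immediately from their closure under addition and negation.

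First I would use Lemma~\ref{lem:LinearVarietyShift} (or rather its trivial content) to extract the containment $\mathbf s \in \mathbf s + V = \mathbf s' + V'$, which produces some $\mathbf w' \in V'$ with $\mathbf s = \mathbf s' + \mathbf w'$; in particular $\mathbf s - \mathbf s' \in V'$. Next I pick an arbitrary $\mathbf v \in V$ and form $\mathbf s + \mathbf v \in \mathbf s + V = \mathbf s' + V'$, so that $\mathbf s + \mathbf v = \mathbf s' + \mathbf v'$ for some $\mathbf v' \in V'$. Rearranging gives $\mathbf v = (\mathbf s' - \mathbf s) + \mathbf v' = -\mathbf w' + \mathbf v'$, which lies in $V'$ since $V'$ is a subspace. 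Hence $V \subseteq V'$.

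The reverse inclusion $V' \subseteq V$ is obtained by interchanging the roles of the pairs $(\mathbf s, V)$ and $(\mathbf s', V')$ in the argument above. Combining the two inclusions yields $V = V'$, as required.

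There is no real obstacle here; the statement is a standard fact about translated subspaces. The only point that must be handled carefully is to ensure that both the triviality $\mathbf s \in \mathbf s + V$ and the subspace closure of $V'$ under addition and negation are used, since neither ingredient alone suffices.
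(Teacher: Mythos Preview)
Your argument is correct and is the standard textbook proof of this elementary fact about translated subspaces. The paper itself does not supply a proof of this lemma; it merely cites \cite[\S 2.1, Lemma~2 on p.~16]{gruenberg_linear_1977}, so there is nothing to compare against beyond noting that your derivation is precisely the one found in that reference.
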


\begin{definition}[Cf.~{\cite[\S 2.1, p.~16]{gruenberg_linear_1977}}]\label{def:SbelLV}Let $V \subseteq W$ be  vector spaces, $\K \subset W$ be a linear variety. The space $V$ is called \emph{a subspace belonging to} $\K$ if $\K = \mathbf s + V$ for some $\mathbf s \in W$. In~\cite{Tarrida2011-ts} the space $V$ is called \emph{a vector space associated with} $\K$.
\end{definition}

Lemma~\ref{lem:LinearVarsEqual} implies that Definition~\ref{def:SbelLV} is unambiguous.

\begin{lemma}[Cf.~{\cite[\S 2.1, Proposition~1 on p.~16]{gruenberg_linear_1977}}]\label{lem:ASIntersec} A non-empty intersection of any family of linear varieties is a linear variety.
\end{lemma}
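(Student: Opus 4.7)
The plan is to exploit Lemma~\ref{lem:LinearVarietyShift} to rewrite every member of the family using a single, uniformly chosen base point, after which the intersection becomes a translate of an ordinary intersection of vector subspaces.

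Let $\{\K_i\}_{i \in I}$ be a family of linear varieties with $\bigcap_{i \in I} \K_i \neq \emptyset$, and for each $i \in I$ let $V_i$ be the vector subspace belonging to $\K_i$, so that $\K_i = \mathbf{s}_i + V_i$ for some $\mathbf{s}_i$. First I would pick any element $\mathbf{s} \in \bigcap_{i \in I} \K_i$, which exists by hypothesis. Since $\mathbf{s} \in \K_i = \mathbf{s}_i + V_i$ for every $i$, Lemma~\ref{lem:LinearVarietyShift} allows me to rewrite each $\K_i$ with $\mathbf{s}$ as its base point, giving $\K_i = \mathbf{s} + V_i$ for all $i \in I$.

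The next step is the set-theoretic manipulation
\[
\bigcap_{i \in I} \K_i = \bigcap_{i \in I} (\mathbf{s} + V_i) = \mathbf{s} + \bigcap_{i \in I} V_i,
\]
which holds because translation by $\mathbf{s}$ is a bijection on the ambient space $W$. The intersection $\bigcap_{i \in I} V_i$ is a vector subspace of $W$ (this is standard: closure under addition and scalar multiplication is preserved under arbitrary intersections). Hence $\bigcap_{i \in I} \K_i$ is the translate of a vector subspace by a fixed vector, and is therefore a linear variety by Definition~\ref{def:linvty}.

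No step should present a real obstacle; the only subtlety is remembering to choose a common base point before taking the intersection, since the naive computation $\bigcap_i (\mathbf{s}_i + V_i)$ cannot be pulled apart when the base points $\mathbf{s}_i$ differ. Lemma~\ref{lem:LinearVarietyShift} is precisely what makes this unification legitimate, and the non-emptiness hypothesis on the intersection is exactly what guarantees that such a common $\mathbf{s}$ exists.
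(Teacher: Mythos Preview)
Your argument is correct and is the standard one. The paper itself does not supply a proof of this lemma; it merely cites \cite[\S 2.1, Proposition~1]{gruenberg_linear_1977}, and your proof is essentially the argument one finds there. Note in passing that your computation $\bigcap_i \K_i = \mathbf{s} + \bigcap_i V_i$ also establishes Lemma~\ref{lem:ASIntersecVecBelong}, which the paper states separately (again without proof).
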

\begin{lemma}[Cf.~{\cite[\S 2.1, proof of Proposition~1 on p.~16]{gruenberg_linear_1977}}]\label{lem:ASIntersecVecBelong}Let $\left(\K_i\;\mid\;i \in I\right)$ be a family of linear varieties having non-empty intersection, and $V_i$ be a vector space belonging to $\K_i$. Then $\bigcap\limits_{i \in I} V_i$ belongs to $\bigcap\limits_{i \in I} \K_i$.
\end{lemma}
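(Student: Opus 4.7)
The plan is to exploit the non-emptiness of $\bigcap_{i \in I} \K_i$ to fix a single translation vector that simultaneously witnesses every $\K_i$ as a coset of $V_i$, then check that $\bigcap_{i \in I} V_i$ is the vector space associated with the intersection.

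First I would pick any $\mathbf{s} \in \bigcap_{i \in I} \K_i$, which exists by hypothesis. For each index $i$, since $V_i$ is a subspace belonging to $\K_i$, there is some $\mathbf{s}_i$ with $\K_i = \mathbf{s}_i + V_i$. Because $\mathbf{s} \in \K_i$, Lemma~\ref{lem:LinearVarietyShift} applied to $\mathbf{s}$ and $\mathbf{s}_i$ yields $\K_i = \mathbf{s} + V_i$. Thus the same vector $\mathbf{s}$ serves as translation for every $\K_i$ simultaneously — this is the key reduction that the non-empty intersection hypothesis provides.

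Next I would show the set equality $\bigcap_{i \in I} \K_i = \mathbf{s} + \bigcap_{i \in I} V_i$ by mutual inclusion. For the inclusion $\supseteq$, if $\mathbf{v} \in \bigcap_{i} V_i$, then $\mathbf{s} + \mathbf{v} \in \mathbf{s} + V_i = \K_i$ for every $i$, so $\mathbf{s} + \mathbf{v} \in \bigcap_i \K_i$. For the reverse inclusion, if $\mathbf{x} \in \bigcap_i \K_i$, then for each $i$ we have $\mathbf{x} \in \mathbf{s} + V_i$, i.e.\ $\mathbf{x} - \mathbf{s} \in V_i$; since this holds for every $i$, $\mathbf{x} - \mathbf{s} \in \bigcap_i V_i$, and therefore $\mathbf{x} \in \mathbf{s} + \bigcap_i V_i$.

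This identity, together with the fact that $\bigcap_i V_i$ is a vector space (intersection of subspaces of $W$), exhibits $\bigcap_i V_i$ as a subspace belonging to $\bigcap_i \K_i$ in the sense of Definition~\ref{def:SbelLV}, which is the desired conclusion. There is no real obstacle here; the only subtlety is recognising that it is essential to translate all the $\K_i$ by the \emph{same} vector $\mathbf{s}$ — attempting to work with the original, possibly distinct, representatives $\mathbf{s}_i$ would not yield the set equality above. The uniqueness assertion of Lemma~\ref{lem:LinearVarsEqual} is not needed for existence but ensures that the phrase ``the space belonging to $\bigcap_i \K_i$'' is unambiguous.
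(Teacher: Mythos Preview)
Your proof is correct and follows exactly the standard argument that the paper's cited reference (Gruenberg--Weir, proof of Proposition~1) uses: pick a common point $\mathbf{s}$ in the intersection, invoke Lemma~\ref{lem:LinearVarietyShift} to write every $\K_i$ as $\mathbf{s}+V_i$, and verify $\bigcap_i\K_i=\mathbf{s}+\bigcap_i V_i$ by double inclusion. There is nothing to add.
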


\begin{definition}[Cf.~{\cite[\S 2.1, Definition on p.~16]{gruenberg_linear_1977}}]Let $\K$ be a linear variety and $\K =\mathbf s + V$. Then $\dim(V)$ is called a \emph{dimension} of $\K$ and denoted by $\dim(\K)$.
\end{definition}

\begin{definition}Let $\K$ be a linear variety and $\K =\mathbf s + V$. Then $n - \dim(V)$ is called a \emph{codimension} of $\K$ and denoted by $\codim(\K)$.
\end{definition}

\begin{lemma}[Cf.~{\cite[\S 2.2, Theorem~1 on p.~19]{gruenberg_linear_1977}}]\label{lem:ASSubAS}Let $\K = \mathbf s + V$ and $\K' = \mathbf s + V'$ are linear varieties and $\K \subseteq \K'$. Then
\begin{enumerate}[label=(\alph*), ref=(\alph*)]
\item\label{lem:ASSubAS:part2} $\codim(\K) \ge \codim(\K')$;
\item\label{lem:ASSubAS:part3} if $\codim(\K) = \codim(\K')$, then $\K = \K'$.
\end{enumerate}
\end{lemma}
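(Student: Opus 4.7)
\smallskip

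The plan is to reduce both parts to the analogous statements about the associated vector spaces $V$ and $V'$. The statement as written has the same base vector $\mathbf s$ for both varieties, but the argument works in general, so I will allow $\K = \mathbf s + V$ and $\K' = \mathbf s' + V'$. The first step is to replace the translation representatives by a common one: since $\K \subseteq \K'$, we can pick any $\mathbf t \in \K$; then $\mathbf t \in \K'$ as well, and by Lemma~\ref{lem:LinearVarietyShift} we have $\K = \mathbf t + V$ and $\K' = \mathbf t + V'$. This normalization is the only place where Lemma~\ref{lem:LinearVarietyShift} is needed.

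Next I would prove $V \subseteq V'$. For any $\mathbf v \in V$, the vector $\mathbf t + \mathbf v$ lies in $\K$, hence in $\K'= \mathbf t + V'$, so $\mathbf v \in V'$. This gives $\dim(V) \le \dim(V')$, and therefore $\codim(\K) = \dim(W) - \dim(V) \ge \dim(W) - \dim(V') = \codim(\K')$, establishing part~\ref{lem:ASSubAS:part2}, where $W$ denotes the ambient space.

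For part~\ref{lem:ASSubAS:part3}, suppose $\codim(\K) = \codim(\K')$. Then $\dim(V) = \dim(V')$; combined with $V \subseteq V'$ this forces $V = V'$ (finite-dimensional subspace of the same dimension as a containing subspace coincides with it). Hence $\K = \mathbf t + V = \mathbf t + V' = \K'$.

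There is no real obstacle here; the only subtle point is noticing that the hypothesis $\K \subseteq \K'$ combined with Lemma~\ref{lem:LinearVarietyShift} lets us choose the same translation vector $\mathbf t$ for both varieties, after which the result is immediate from elementary linear algebra on the associated subspaces.
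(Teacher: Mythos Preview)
Your argument is correct. The paper does not give its own proof of this lemma; it simply records the statement with a reference to \cite[\S 2.2, Theorem~1]{gruenberg_linear_1977}, so there is nothing to compare against beyond noting that your reduction to $V \subseteq V'$ via a common translation vector is the standard route. One small remark: as the lemma is stated in the paper the two varieties already share the base point $\mathbf s$, so your appeal to Lemma~\ref{lem:LinearVarietyShift} to choose a common $\mathbf t$ is not strictly needed here (though, as you say, it makes the argument work for arbitrary base points).
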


\begin{definition}For $A \in \M_{m\,n}(\F),\mathbf b \in \F^{n}$ we denote by $\AS(A,\mathbf b)$ the solution set of a system of linear equations $A\mathbf x = \mathbf b$.
\end{definition}

\begin{lemma}[Cf.~{\cite[Sec.~3.3, Theorem~3.8]{Friedberg2002-wu}}]\label{lem:HomEqVecSpaceDim}Let $A \in \M_{m\,n}(\F)$ and $A\mathbf x = 0$ be a homogeneous system of $m$ linear equations in $n$ unknowns over a field $\F$. Then $\AS(A,\mathbf 0)$ is a vector subspace of $\F^n$ of dimension $n - \rk(A)$.
\end{lemma}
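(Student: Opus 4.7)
The plan is to reduce the statement to the rank--nullity theorem for linear maps. First I would note that $\AS(A, \mathbf 0)$ is non-empty (it contains $\mathbf 0$) and closed under addition and scalar multiplication, since if $A \mathbf x = \mathbf 0$ and $A \mathbf y = \mathbf 0$, then $A(\alpha \mathbf x + \beta \mathbf y) = \alpha A \mathbf x + \beta A \mathbf y = \mathbf 0$ for any $\alpha, \beta \in \F$. Hence $\AS(A, \mathbf 0)$ is a vector subspace of $\F^n$.

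Next I would introduce the linear map $T_A \colon \F^n \to \F^m$ defined by $T_A(\mathbf x) = A \mathbf x$. By the very definition of $\AS(A, \mathbf 0)$ we have $\Ker(T_A) = \AS(A, \mathbf 0)$. On the other hand, the image $\Img(T_A)$ consists of all vectors of the form $A \mathbf x$ with $\mathbf x \in \F^n$, which is precisely $\colsp(A)$; therefore $\dim(\Img(T_A)) = \dim(\colsp(A)) = \rk(A)$.

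Finally, I would invoke the rank--nullity theorem for the linear map $T_A$, which gives
\[
\dim(\Ker(T_A)) + \dim(\Img(T_A)) = \dim(\F^n) = n.
\]
Substituting the two identifications above yields $\dim(\AS(A, \mathbf 0)) = n - \rk(A)$, as required.

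There is no real obstacle here: the entire argument is a direct translation between the language of systems of linear equations and the language of linear maps, together with one invocation of the rank--nullity theorem. The only point requiring minor care is making sure the two characterizations $\Ker(T_A) = \AS(A, \mathbf 0)$ and $\dim(\Img(T_A)) = \rk(A)$ are stated cleanly, after which the conclusion is immediate.
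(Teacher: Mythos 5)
Your argument is correct and complete: the paper states this lemma without proof, citing Friedberg, and your reduction to the rank--nullity theorem via the linear map $T_A$ (with $\Ker(T_A) = \AS(A,\mathbf 0)$ and $\Img(T_A) = \colsp(A)$) is precisely the standard proof of that cited result. Nothing is missing.
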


\begin{lemma}[See~{\cite[Sec.~3.3, Theorem~3.9]{Friedberg2002-wu}}]\label{lem:NonHIsSumH}Let $A \in \M_{m\,n}(\F), \mathbf b \in \F^{n}$. Then for any solution $\mathbf s$ to $A\mathbf x = \mathbf b$
\[
\AS(A,\mathbf b) = \{\mathbf s\} + \AS(A,\mathbf 0) = \{\mathbf s + \mathbf k \colon \mathbf k \in \AS(A,\mathbf 0)\}.
\]
\end{lemma}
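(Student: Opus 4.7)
The plan is to prove the set equality by the standard two-inclusion argument, exploiting linearity of the map $\mathbf x \mapsto A\mathbf x$. Since $\mathbf s$ is a fixed solution, i.e.\ $A\mathbf s = \mathbf b$, the whole proof reduces to the identity $A(\mathbf s + \mathbf k) - A\mathbf s = A\mathbf k$.

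First I would prove the inclusion $\{\mathbf s\} + \AS(A,\mathbf 0) \subseteq \AS(A,\mathbf b)$. Given any $\mathbf k \in \AS(A,\mathbf 0)$, I would compute $A(\mathbf s + \mathbf k) = A\mathbf s + A\mathbf k = \mathbf b + \mathbf 0 = \mathbf b$, which shows $\mathbf s + \mathbf k \in \AS(A,\mathbf b)$. This direction uses only the definition of $\AS(A,\mathbf 0)$ and the linearity of matrix multiplication.

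Next I would prove the reverse inclusion $\AS(A,\mathbf b) \subseteq \{\mathbf s\} + \AS(A,\mathbf 0)$. Take any $\mathbf x \in \AS(A,\mathbf b)$ and set $\mathbf k := \mathbf x - \mathbf s$, so that $\mathbf x = \mathbf s + \mathbf k$. Then $A\mathbf k = A\mathbf x - A\mathbf s = \mathbf b - \mathbf b = \mathbf 0$, which gives $\mathbf k \in \AS(A,\mathbf 0)$, hence $\mathbf x \in \{\mathbf s\} + \AS(A,\mathbf 0)$.

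There is no real obstacle here: the statement is a textbook fact and the proof is a two-line verification using the linearity of $A$. The only thing worth noting is that the second equality in the statement is simply the unpacking of the notation $\{\mathbf s\} + \AS(A,\mathbf 0)$ from Definition~\ref{def:linvty}, so no separate argument is needed for it.
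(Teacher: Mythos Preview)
Your proof is correct and is precisely the standard two-inclusion argument; the paper does not supply its own proof for this lemma but simply cites the textbook, and your argument is exactly the one found in the cited reference.
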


\begin{corollary}\label{cor:SolSetLinVty}Let $A \in \M_{m\,n}(\F), \mathbf b \in \F^{n}$. If a system of linear equations $A\mathbf x = \mathbf b$ is consistent, then its solution set is a linear variety and $\AS(A,\mathbf 0)$ is a vector space belonging to it.
\end{corollary}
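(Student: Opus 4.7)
The plan is to assemble the corollary from the two immediately preceding lemmas together with the basic definition of a linear variety. Since consistency of $A\mathbf x = \mathbf b$ means by definition that there exists some $\mathbf s \in \F^{n}$ with $A\mathbf s = \mathbf b$, i.e.\ $\mathbf s \in \AS(A,\mathbf b)$, this $\mathbf s$ is exactly the hypothesis needed to invoke Lemma~\ref{lem:NonHIsSumH}. Applying that lemma yields the set-theoretic identity
\[
\AS(A,\mathbf b) = \{\mathbf s\} + \AS(A,\mathbf 0).
\]

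Next I would invoke Lemma~\ref{lem:HomEqVecSpaceDim} to the effect that $\AS(A,\mathbf 0)$ is a vector subspace of $\F^{n}$. Combining these two facts with Definition~\ref{def:linvty}, the set $\{\mathbf s\} + \AS(A,\mathbf 0)$ is a linear variety, so $\AS(A,\mathbf b)$ itself is a linear variety. Finally, since the expression $\AS(A,\mathbf b) = \{\mathbf s\} + \AS(A,\mathbf 0)$ exhibits $\AS(A,\mathbf b)$ in exactly the form $\mathbf s + V$ of Definition~\ref{def:SbelLV}, the vector space $\AS(A,\mathbf 0)$ is by definition a vector space belonging to $\AS(A,\mathbf b)$, which closes the argument.

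There is essentially no obstacle here: the statement is a direct bookkeeping consequence of the previous two lemmas and the two definitions. The only thing to be slightly careful about is that consistency is used only to guarantee the existence of the translation vector $\mathbf s$ (without which Lemma~\ref{lem:NonHIsSumH} cannot be applied and $\AS(A,\mathbf b)$ would be empty, hence not a linear variety in the sense of Definition~\ref{def:linvty}). Once $\mathbf s$ is chosen, Lemma~\ref{lem:LinearVarsEqual} incidentally guarantees that the associated vector space $\AS(A,\mathbf 0)$ does not depend on that choice, but this remark is not strictly necessary for the statement itself.
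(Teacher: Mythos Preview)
Your proposal is correct and is exactly the argument the paper has in mind: the corollary is stated in the paper without proof precisely because it is an immediate consequence of Lemma~\ref{lem:NonHIsSumH} (to write $\AS(A,\mathbf b)=\{\mathbf s\}+\AS(A,\mathbf 0)$) and Lemma~\ref{lem:HomEqVecSpaceDim} (to see that $\AS(A,\mathbf 0)$ is a subspace), together with Definitions~\ref{def:linvty} and~\ref{def:SbelLV}. Your extra remark about Lemma~\ref{lem:LinearVarsEqual} is a nice sanity check but, as you note, not needed for the statement.
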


\begin{lemma}[See~{\cite[Sec.~3.3, Theorem~3.11]{Friedberg2002-wu}}]\label{lem:KronCap}Let $A \in \M_{m\,n}(\F),\mathbf b \in \F^{n}$ and  $A\mathbf x = \mathbf b$ be a system of linear equations. Then the system is consistent if and only if $\rk(A) = \rk(A|\mathbf b)$.
\end{lemma}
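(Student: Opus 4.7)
The plan is to characterize consistency of $A\mathbf{x} = \mathbf{b}$ through the column space of $A$, and then translate the column-space condition into a rank condition. Writing $\mathbf{x} = (x_1, \ldots, x_n)^t$, the product $A\mathbf{x}$ expands as $x_1 A(|1] + x_2 A(|2] + \ldots + x_n A(|n]$, so the existence of a solution is literally the same as the statement that $\mathbf{b}$ lies in the linear span of the columns of $A$, i.e.\ $\mathbf{b} \in \colsp(A)$.

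Next I would relate the two column spaces. By construction $\colsp(A|\mathbf{b})$ is spanned by the columns of $A$ together with the additional vector $\mathbf{b}$, so we always have the inclusion $\colsp(A) \subseteq \colsp(A|\mathbf{b})$, and equality holds precisely when $\mathbf{b} \in \colsp(A)$. Since $\rk$ of a matrix equals the dimension of its column space, the inclusion of spaces gives $\rk(A) \le \rk(A|\mathbf{b})$, with equality iff the inclusion is an equality (two nested finite-dimensional vector spaces of the same dimension coincide).

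Chaining these equivalences yields: the system $A\mathbf{x} = \mathbf{b}$ is consistent $\iff$ $\mathbf{b} \in \colsp(A)$ $\iff$ $\colsp(A) = \colsp(A|\mathbf{b})$ $\iff$ $\rk(A) = \rk(A|\mathbf{b})$, which is exactly the statement of the lemma.

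There is no genuine obstacle here; the only subtle point is the implicit use of the fact that $\rk(A) = \dim \colsp(A)$ (the equality of row-rank and column-rank, which is standard and available from~\cite{Friedberg2002-wu}) in order to pass from the geometric formulation in terms of column spaces to the numerical formulation in terms of ranks. Everything else is a direct translation between the definition of matrix-vector multiplication and the definition of the linear span of a set of vectors.
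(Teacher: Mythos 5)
Your proof is correct; it is the standard argument for the Kronecker--Capelli (Rouch\'e--Capelli) criterion, reducing consistency to $\mathbf b \in \colsp(A)$ and then to equality of the nested column spaces $\colsp(A) \subseteq \colsp(A|\mathbf b)$ via their dimensions. The paper does not prove this lemma at all --- it simply cites \cite[Sec.~3.3, Theorem~3.11]{Friedberg2002-wu} --- so there is nothing to diverge from; your argument fills in exactly the expected textbook proof.
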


\begin{definition}[See~{\cite[Sec~3.4, Definition on p. 182]{Friedberg2002-wu}}]\label{lem:InvMultEquiv}
Two systems of linear equations are called \emph{equivalent} if they have the same solution set.
\end{definition}

\begin{lemma}[See~{\cite[Sec.~3.4, Theorem~3.13]{Friedberg2002-wu}}]Let $A\mathbf x = \mathbf b$ be a system of $m$ linear equations in $n$ unknowns, and let $C$ be an invertible $m\times m$ matrix. Then the system $(CA)\mathbf x = C\mathbf b$ is equivalent to $A\mathbf x = \mathbf b$.
\end{lemma}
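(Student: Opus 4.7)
The plan is to prove that the two systems have the same solution set, since by Definition~\ref{lem:InvMultEquiv} this is exactly what it means for them to be equivalent. I will establish both inclusions $\AS(A, \mathbf b) \subseteq \AS(CA, C\mathbf b)$ and $\AS(CA, C\mathbf b) \subseteq \AS(A, \mathbf b)$ separately, with the invertibility of $C$ being essential only for the second inclusion.

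For the forward inclusion, I will take an arbitrary $\mathbf x_0 \in \AS(A, \mathbf b)$, so that $A\mathbf x_0 = \mathbf b$. Left-multiplying both sides by $C$ and using the associativity of matrix multiplication yields $(CA)\mathbf x_0 = C(A\mathbf x_0) = C\mathbf b$, so $\mathbf x_0 \in \AS(CA, C\mathbf b)$. This direction does not use invertibility of $C$ at all.

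For the reverse inclusion, I will take $\mathbf x_0 \in \AS(CA, C\mathbf b)$, so that $(CA)\mathbf x_0 = C\mathbf b$. Since $C$ is invertible, $C^{-1}$ exists, and left-multiplying both sides by $C^{-1}$ gives $A\mathbf x_0 = C^{-1}(CA)\mathbf x_0 = C^{-1}C\mathbf b = \mathbf b$, so $\mathbf x_0 \in \AS(A, \mathbf b)$.

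There is no serious obstacle here: the argument is entirely a one-line manipulation in each direction, and the invertibility hypothesis is used exclusively to justify left-multiplication by $C^{-1}$ in the second step. The statement itself is included primarily as a convenience so that later proofs may freely replace $A\mathbf x = \mathbf b$ with $(CA)\mathbf x = C\mathbf b$ (e.g., for row reductions) without re-verifying equivalence each time.
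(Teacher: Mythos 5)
Your proof is correct and is exactly the standard two-inclusion argument that the paper delegates to the cited reference (Friedberg et al., Theorem 3.13) without reproducing it; there is nothing to add or correct.
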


\begin{definition}\label{def:LinVarEqRepr}Let $A \in \M_{m\,n}(\F), \mathbf b \in \F^{n}$ and $\K \subseteq \F^n$ be a linear variety. If  $\K = \AS(A,\mathbf b)$, then the pair $(A,\mathbf b)$ is called an \emph{equational representation} for $\K$. 
\end{definition}

\begin{lemma}[Cf.~{\cite[\S 3.3, Proposition~2]{gruenberg_linear_1977}}]\label{lem:ForLinVarExistsEqRep}
Every linear variety has an equational representation. 
\end{lemma}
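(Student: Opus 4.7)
The plan is to reduce to the homogeneous case and appeal to Lemma~\ref{lem:NonHIsSumH}. Write $\K = \mathbf s + V$ for some $\mathbf s \in \F^n$ and some vector subspace $V \subseteq \F^n$ (which exists by definition of a linear variety). If I can produce a matrix $A \in \M_{m\,n}(\F)$ with $V = \AS(A, \mathbf 0)$, then setting $\mathbf b = A\mathbf s$ makes $\mathbf s$ a solution of $A\mathbf x = \mathbf b$, and Lemma~\ref{lem:NonHIsSumH} immediately gives
\[
\AS(A,\mathbf b) = \{\mathbf s\} + \AS(A,\mathbf 0) = \mathbf s + V = \K,
\]
so $(A, \mathbf b)$ is the desired equational representation.

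It therefore suffices to show that every vector subspace $V \subseteq \F^n$ is the solution set of a homogeneous linear system. To this end, I would pick a basis $\mathbf v^{(1)}, \ldots, \mathbf v^{(d)}$ of $V$ (where $d = \dim V$) and extend it to a basis $\mathbf v^{(1)}, \ldots, \mathbf v^{(n)}$ of $\F^n$. Let $\phi_1, \ldots, \phi_n$ be the associated dual basis of linear functionals on $\F^n$, characterized by $\phi_i(\mathbf v^{(j)}) = \delta_{i\,j}$. Then a vector $\mathbf x \in \F^n$ lies in $V$ precisely when $\phi_{d+1}(\mathbf x) = \ldots = \phi_n(\mathbf x) = 0$. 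Let $A \in \M_{(n-d)\,n}(\F)$ be the matrix whose $i$-th row is the coordinate row of $\phi_{d+i}$ in the standard basis; then by construction $V = \AS(A, \mathbf 0)$.

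Combining the two steps finishes the proof. There is no substantial obstacle here: both ingredients are standard, and the only thing to check is the consistency of the final system $A\mathbf x = \mathbf b$, which is automatic because $\mathbf s$ itself is a solution by the choice $\mathbf b = A\mathbf s$. As a sanity check one may note that $\dim(\AS(A,\mathbf 0)) = n - \rk(A) = n - (n-d) = d = \dim(V)$ via Lemma~\ref{lem:HomEqVecSpaceDim}, confirming the construction is tight.
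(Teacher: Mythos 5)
Your argument is correct, and it is the standard proof that the paper itself omits (the lemma is only cited to Gruenberg \S 3.3, Proposition~2, whose argument is exactly this: realize the associated subspace $V$ as the kernel of a surjection onto a complement via the dual basis, then translate by $\mathbf b = A\mathbf s$ and invoke Lemma~\ref{lem:NonHIsSumH}). The only cosmetic point is the degenerate case $V = \F^n$, where your matrix $A$ has zero rows; taking $A$ to be a single zero row with $\mathbf b = \mathbf 0$ handles it.
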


\begin{lemma}\label{lem:MatRankEqToCodim}Let $(A,\mathbf b)$ be an equational representation of a linear variety $\K \subseteq \F^n$. Then $\rk(A) = \codim(\K)$.
\end{lemma}
\begin{proof}Let $\mathbf s \in \K$. Lemma~\ref{lem:NonHIsSumH} implies that $\K = \mathbf s + \AS(A,\mathbf 0)$. Then
\[
\rk (A) = n - \dim(\AS(A,\mathbf 0)) = \codim (\K)
\]
by Lemma~\ref{lem:HomEqVecSpaceDim} because $\AS(A,\mathbf 0)$ is a vector space which also follows from Lemma~\ref{lem:HomEqVecSpaceDim}.
\end{proof}

\begin{lemma}\label{lem:ChooseFullRank}Let $\K$ be a linear variety,  $A \in \M_{m\, n}(\F), \mathbf b \in \F^{m}$ be such that 
\begin{equation}\label{lem:ChooseFullRank:eqq1}
\K = \AS(A,\mathbf b)\subsetneq \F^{n}.
\end{equation} 
Then there exists $R \subseteq [m]$ such that $\K = \AS\Bigl(A[R|),\mathbf b[R|)\Bigr)$ and $A[R|)$ has full rank.
\end{lemma}
%
%

\begin{proof} Lemma~\ref{lem:NonHIsSumH} implies that 
\begin{equation}\label{lem:ChooseFullRank:eq1}
\AS(A, \mathbf 0) \subsetneq \F^n.
\end{equation} 
According to Lemma~\ref{lem:HomEqVecSpaceDim}, $V = \AS(A, \mathbf 0)$ is a vector subspace of $\F^n$. Hence, $\dim(V) < n$ by~\eqref{lem:ChooseFullRank:eq1}. Therefore, relying on Lemma~\ref{lem:HomEqVecSpaceDim} again we conclude that $\rk(A) = r = n - \dim(V) > 0$. 

Let $R = \{i_1,\ldots, i_r\}$ be any set of indices of linearly independent rows in $A$. The equality~\eqref{lem:ChooseFullRank:eqq1} implies that $\AS(A, \mathbf b) \neq 0$. Hence, the system of linear equations $Ax = \mathbf b$ is consistent. Therefore, since $\rk(A) = r$, from the properties of rank of the matrix $A|\mathbf b$ we obtain that every equation of system $Ax = \mathbf b$ is a linear combination of equations with indices belonging to $R$. Thus, $\K = \AS(A[R|),\mathbf b[R|)).$
\end{proof}

\begin{lemma}\label{lem:EqRepEqRank}Suppose that $\varnothing \neq \AS(A,\mathbf b) = \AS(A',\mathbf b') \subseteq \F^n$. Then
\begin{enumerate}[label=(\alph*), ref=(\alph*)]
\item\label{lem:EqRepEqRank:part1} $\AS(A,\mathbf 0) = \AS(A',\mathbf 0)$;
\item\label{lem:EqRepEqRank:part2} $\rk(A) = \rk(A')$;
\item\label{lem:EqRepEqRank:part3} if $A$ and $A'$ have full rank, then there exists an invertible matrix $C \in \M_{\rk(A)\, \rk(A)}(\F)$ such that $CA = A'$ and $C\mathbf b = \mathbf b'$.
\end{enumerate}
\end{lemma}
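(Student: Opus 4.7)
For part~\ref{lem:EqRepEqRank:part1}, my approach is to pick any point $\mathbf s$ in the common solution set and apply Lemma~\ref{lem:NonHIsSumH} to both equational representations. This yields $\K = \{\mathbf s\} + \AS(A,\mathbf 0) = \{\mathbf s\} + \AS(A',\mathbf 0)$, so both $\AS(A,\mathbf 0)$ and $\AS(A',\mathbf 0)$ are subspaces belonging to the same linear variety. Uniqueness (Lemma~\ref{lem:LinearVarsEqual}) then forces them to coincide. Part~\ref{lem:EqRepEqRank:part2} follows immediately from part~\ref{lem:EqRepEqRank:part1} combined with Lemma~\ref{lem:HomEqVecSpaceDim}, since $\rk(A) = n - \dim\AS(A,\mathbf 0) = n - \dim\AS(A',\mathbf 0) = \rk(A')$.

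Part~\ref{lem:EqRepEqRank:part3} is where the real content lies, and the main obstacle is producing the conversion matrix $C$ and proving it is invertible. My plan is to pass through row spaces: the rows of $A$ annihilate precisely the vectors in $\AS(A,\mathbf 0)$, so by a dimension count the row space $\rowsp(A)$ equals the annihilator of $\AS(A,\mathbf 0)$ in the dual pairing of $\F^n$ with itself, and similarly for $A'$. Part~\ref{lem:EqRepEqRank:part1} then gives $\rowsp(A) = \rowsp(A')$. Consequently each row of $A'$ is a linear combination of rows of $A$, which provides a matrix $C \in \M_{r\,r}(\F)$ with $CA = A'$, where $r = \rk(A) = \rk(A')$.

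To establish invertibility of $C$, I plan to apply the same argument with the roles of $A$ and $A'$ interchanged to obtain $C' \in \M_{r\,r}(\F)$ with $C'A' = A$. Substituting gives $(C'C)A = A$; since $A$ has full row rank $r$, its $n$ columns span $\F^r$, so the map $X \mapsto XA$ from $\M_{r\,r}(\F)$ to $\M_{r\,n}(\F)$ is injective. This forces $C'C = I_r$, and by symmetry also $CC' = I_r$, proving $C$ is invertible.

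Finally, to verify $C\mathbf b = \mathbf b'$, I will pick any $\mathbf s \in \K$ and compute
\[
C\mathbf b = C(A\mathbf s) = (CA)\mathbf s = A'\mathbf s = \mathbf b',
\]
which completes the proof. The only genuinely delicate step is the row-space argument and the injectivity of right-multiplication by $A$, which together replace the more familiar but inapplicable language of inner products.
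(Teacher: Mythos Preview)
Your proof is correct. Parts~\ref{lem:EqRepEqRank:part1} and~\ref{lem:EqRepEqRank:part2}, as well as the final computation $C\mathbf b = CA\mathbf s = A'\mathbf s = \mathbf b'$, are essentially identical to the paper's argument.

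For part~\ref{lem:EqRepEqRank:part3} there is a minor difference in how the matrix $C$ is produced and shown to be invertible. You argue directly that $\rowsp(A)$ is the annihilator of $\AS(A,\mathbf 0)$ by a dimension count, deduce $\rowsp(A) = \rowsp(A')$ from part~\ref{lem:EqRepEqRank:part1}, and then prove invertibility by constructing a two-sided inverse $C'$ and using injectivity of $X \mapsto XA$. The paper instead stacks the two systems into $A'' = \begin{psmallmatrix}A\\A'\end{psmallmatrix}$, observes $\AS(A'',\mathbf b'') = \AS(A,\mathbf b)$, and reapplies part~\ref{lem:EqRepEqRank:part2} to get $\rk(A'') = \rk(A)$, which immediately places the rows of $A'$ in $\rowsp(A)$; invertibility of $C$ is then dispatched in one line from $\rk(CA) = \rk(A') = \rk(A)$. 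The paper's route is a bit slicker in that it recycles part~\ref{lem:EqRepEqRank:part2} rather than invoking dual-space language, but your annihilator argument is perfectly valid and arguably more transparent about why the row spaces coincide.
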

\begin{proof}\textbf{\ref{lem:EqRepEqRank:part1}}\quad Let $\mathbf s \in \AS(A,\mathbf b)$. Since $\AS(A,\mathbf b) = \AS(A',\mathbf b')$, then $\mathbf s \in \AS(A',\mathbf b')$. It follows from Lemma~\ref{lem:NonHIsSumH} that
\[
\{\mathbf s\} + \AS(A,\mathbf 0) = \AS(A,\mathbf b)
\;\;\;\;\mbox{and}\;\;\;\;
\{\mathbf s\} + \AS(A',\mathbf 0) = \AS(A',\mathbf b').
\]
Then the assumption of the lemma implies that
\[
\{\mathbf s\} + \AS(A,\mathbf 0) = \{\mathbf s\} + \AS(A',\mathbf 0).
\]
Therefore, since $\AS(A,\mathbf 0)$ and $\AS(A',\mathbf 0)$ are vector spaces by Lemma~\ref{lem:HomEqVecSpaceDim},
\begin{equation}\label{lem:EqRepEqRank:eq1}
\AS(A,\mathbf 0) = \AS(A',\mathbf 0)
\end{equation}
 by Lemma~\ref{lem:LinearVarsEqual}.

\paragraph{\ref{lem:EqRepEqRank:part2}} Lemma~\ref{lem:HomEqVecSpaceDim} implies that $\AS(A,\mathbf 0)$ and $\AS(A',\mathbf 0)$ are both vector spaces. Then the equality~\eqref{lem:EqRepEqRank:eq1} implies that $\dim(\AS(A,\mathbf 0)) = \dim(\AS(A',\mathbf 0)).$ Therefore,
\[
\rk(A) = n - \dim(\AS(A,\mathbf 0)) = n - \dim(\AS(A',\mathbf 0)) = \rk(A')
\]
by Lemma~\ref{lem:HomEqVecSpaceDim}.

\paragraph{\ref{lem:EqRepEqRank:part3}}Let $A'' = \begin{psmallmatrix}A\\ A'\end{psmallmatrix}$ and $\mathbf b'' = \begin{psmallmatrix}\mathbf b\\ \mathbf b'\end{psmallmatrix}$. Since $\AS(A,\mathbf b)= \AS(A',\mathbf b')$, 
\[
\AS(A'',\mathbf b'') = \AS(A,\mathbf b) \cap \AS(A',\mathbf b')= \AS(A,\mathbf b).
\]
Hence, $\rk(A'') = \rk(A)$ by the part~\ref{lem:EqRepEqRank:part2} of the lemma. Therefore, every row of $A''$ corresponding to $A'$ is a linear combination of rows corresponding to $A$, which means that there exists a matrix $C \in \M_{\rk(A)\,\rk(A)}(\F)$ such that
\begin{equation}\label{lem:EqRepEqRank:eq111}
CA = A'.
\end{equation}
Since $\rk(A) = \rk(A')$, this matrix is invertible.

Let $\mathbf s$ be any solution of the system of linear equations $Ax = \mathbf b$. It exists because we suppose that $\AS(A,\mathbf b)\neq \varnothing$. Thus, $A\mathbf s = \mathbf b$. This equality together with the equality~\eqref{lem:EqRepEqRank:eq111} imply that
\[
C\mathbf b = CA\mathbf s = A'\mathbf s = \mathbf b'
\]
because $\AS(A,\mathbf b) = \AS(A',\mathbf b')$ and consequently $\mathbf s \in \AS(A',\mathbf b')$ as well.
\end{proof}

\begin{corollary}\label{cor:ArbitraryFullRank}Let $A \in \M_{m\, n}(\F), \mathbf b \in \F^{n}$ and  $\K = \AS(A,\mathbf b)\subsetneq \F^{n}$ be a linear variety. If $R \subseteq [m]$ is a nonempty set such that $A[R|)$ has full rank, then $\K = \AS(A[R|),\mathbf b[R|))$.
\end{corollary}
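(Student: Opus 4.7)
The plan is to prove $\K = \AS\bigl(A[R|),\mathbf b[R|)\bigr)$ by establishing the inclusion $\K \subseteq \AS\bigl(A[R|),\mathbf b[R|)\bigr)$ directly and then matching codimensions via Lemma~\ref{lem:ASSubAS}\ref{lem:ASSubAS:part3}.

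First, I would observe that for any $\mathbf x \in \K$ the equality $A\mathbf x = \mathbf b$ entails $A[R|)\mathbf x = \mathbf b[R|)$ by restricting to the rows indexed by $R$, so $\K \subseteq \K'$ where $\K' := \AS\bigl(A[R|),\mathbf b[R|)\bigr)$. In particular, $\K'$ is nonempty; hence by Corollary~\ref{cor:SolSetLinVty} it is itself a linear variety, and the pair $(A[R|),\mathbf b[R|))$ is an equational representation of $\K'$.

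Next, I would match codimensions using the earlier lemma stating that $\rk(A) = \codim(\K)$ whenever $(A,\mathbf b)$ is an equational representation of a linear variety $\K$. Applied to the representations $(A,\mathbf b)$ of $\K$ and $(A[R|),\mathbf b[R|))$ of $\K'$, this gives $\codim(\K) = \rk(A)$ and $\codim(\K') = \rk\bigl(A[R|)\bigr)$. Interpreting the hypothesis that $A[R|)$ has full rank in the sense used throughout (as in the proof of Lemma~\ref{lem:ChooseFullRank}) --- namely that $R$ indexes a maximal linearly independent set of rows, so that $\rk\bigl(A[R|)\bigr) = |R| = \rk(A)$ --- we conclude $\codim(\K) = \codim(\K')$. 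Then Lemma~\ref{lem:ASSubAS}\ref{lem:ASSubAS:part3} applied to $\K \subseteq \K'$ with equal codimensions yields $\K = \K'$.

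There is no computational obstacle: the proof reduces to a short chain of bookkeeping with ranks and codimensions. The only subtlety, which I would flag in the write-up, lies in reading the hypothesis \emph{full rank} consistently with Lemma~\ref{lem:ChooseFullRank}: it must mean that the retained rows span the full row space of $A$ (equivalently, $\rk\bigl(A[R|)\bigr) = \rk(A)$), since if \emph{full rank} were read merely as $\rk\bigl(A[R|)\bigr) = |R|$ without the size constraint, the conclusion could fail --- for instance, by choosing $R$ to be a single index of a nonzero row when $A$ has rank greater than one.
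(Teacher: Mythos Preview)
Your argument is correct and is the natural route: the paper states this corollary without proof, and the inclusion $\K \subseteq \K'$ together with the rank--codimension lemma and Lemma~\ref{lem:ASSubAS}\ref{lem:ASSubAS:part3} is precisely how one would derive it from the preceding material. Your caveat about reading ``full rank'' as $\rk\bigl(A[R|)\bigr) = \rk(A)$ (not merely $=|R|$) is exactly right and matches the usage in Lemma~\ref{lem:ChooseFullRank}; without that reading the statement is false, so it is worth keeping the remark.
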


\begin{lemma}\label{lem:LinImageAS}Let $\K \subseteq \F^n$ be a linear variety, $V \subseteq \F^n$ be a vector space belonging to $\K$ and $T \colon \F^n \to \F^m$ be a linear map. Then $T(\K)$ is a linear variety and $T(V)$ is a vector space belonging to $T(\K)$.
\end{lemma}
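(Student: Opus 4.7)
The plan is to directly unfold the definition of a linear variety and apply linearity of $T$. Since $V$ is a vector space belonging to $\K$, by Definition~\ref{def:SbelLV} there exists $\mathbf{s} \in \F^n$ with $\K = \{\mathbf{s}\} + V$. I would then compute
\[
T(\K) = \{T(\mathbf{s} + \mathbf{v}) : \mathbf{v} \in V\} = \{T(\mathbf{s}) + T(\mathbf{v}) : \mathbf{v} \in V\} = \{T(\mathbf{s})\} + T(V),
\]
where the middle equality uses additivity of $T$.

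It remains to note that $T(V)$ is a vector subspace of $\F^m$. This is the standard fact that the image of a subspace under a linear map is again a subspace: it is non-empty (contains $T(\mathbf{0}) = \mathbf{0}$) and closed under addition and scalar multiplication, both of which follow immediately from the linearity of $T$ together with the closure of $V$ under these operations.

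Combining the two observations, $T(\K) = \{T(\mathbf{s})\} + T(V)$ is a translate of a vector subspace of $\F^m$, hence a linear variety by Definition~\ref{def:linvty}, and $T(V)$ is by definition a vector space belonging to it in the sense of Definition~\ref{def:SbelLV}.

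The argument is essentially a one-line computation; there is no real obstacle. The only thing worth being careful about is to distinguish between $\K$ being a set of the form $\mathbf{s} + V$ (existence of \emph{some} such decomposition suffices by Lemma~\ref{lem:LinearVarsEqual}) and $V$ being \emph{the} vector space belonging to $\K$, so that the final sentence correctly matches Definition~\ref{def:SbelLV} for the image.
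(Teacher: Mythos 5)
Your proposal is correct and follows essentially the same route as the paper's own proof: write $\K = \{\mathbf{s}\} + V$, use linearity of $T$ to get $T(\K) = \{T(\mathbf{s})\} + T(V)$, and observe that $T(V)$ is a subspace of $\F^m$. The extra care you take about the well-definedness of "the vector space belonging to" (via Lemma~\ref{lem:LinearVarsEqual}) is a nice touch but not a departure from the paper's argument.
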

\begin{proof}Indeed, if $\K = \mathbf s + V$ for some $\mathbf s \in \F^n$ and a vector subspace $V \subseteq \F^n$, then $T(\K) = T(\mathbf s) + T(V)$, where $T(\mathbf s) \in \F^n$ and $T(V) \subseteq \F^n$ is a vector subspace of $\F^m$.
\end{proof}

\begin{lemma}\label{lem:InvertibleLDAS}Let $\K$ be a linear variety, $A \in \M_{m\, n}(\F)$, $\mathbf b \in \F^m$ be such that $\K = \AS(A,\mathbf b)$,  $C \in \M_{n\,n}(\F)$ be an invertible matrix and $\K_C = \{C\mathbf x \mid \mathbf x \in \K\}.$ Then
\begin{enumerate}[label=(\alph*), ref=(\alph*)]
\item\label{lem:InvertibleLDAS:part1} $\K_C = \AS(AC^{-1},\mathbf b)$;
\item\label{lem:InvertibleLDAS:part2} $\codim(\K_C) = \codim(\K)$.
\end{enumerate}
\end{lemma}
\begin{proof}\textbf{\ref{lem:InvertibleLDAS:part1}}\quad Indeed, the conditions
$
AC^{-1}(C\mathbf x) = \mathbf b\;\;\mbox{and}\;\; A\mathbf x = \mathbf b
$
are equivalent for all $\mathbf x \in \F^{n}$.
\paragraph{\ref{lem:InvertibleLDAS:part2}}Assume that $\K = \mathbf s + V$ for some $\mathbf s \in \F^n$ and vector subspace $V \subseteq \F^n$. Lemma~\ref{lem:LinImageAS} implies that $V_C = \{C\mathbf x \mid \mathbf x \in V\}$ is a vector space belonging to $\K_C$. Since $C$ is invertible, $\dim(V_C) = \dim(V)$, which implies the required equality.
\end{proof}

We also need the definition for a vector space over $\F$ whose coordinates are indexed by an arbitrary finite set.

\begin{definition}Let $E$ be a finite set. Then by $\F^E$ we denote the set of all functions from $E$ to $\F$ endowed with the standard structure of vector space over $\F$; that is, $$(f + \lambda g)(x) = f(x) + \lambda g(x)$$ for all $f, g \in \F^E, \lambda \in \F, x \in E$. We call this space a \emph{coordinate space indexed by} $E$.

For every $i \in E$ let us also denote by $\mathbf e_i$ the function $E \to \F$ which is equal to 1 if its argument is equal to $s$, otherwise it is equal to 0. Since $E$ is finite, the set $\{\mathbf e_i \mid i \in E\}$ forms a basis of $\F^E$.

For every $\mathbf v \in \F^{E}$ and $i \in E$ we denote by $\mathbf v_i$ the value of the function identified with $\mathbf v$ at the point $i$.
\end{definition}

It is clear that $\F^n$ could also be considered as $\F^{[n]}$ and for every finite set $S$ the space $\F^S$ could be considered as $\F^{|S|}$. Therefore, all the theory developed above could be applied to $\F^S$ with $S$ finite. We also identify $\M_{n\,k}(\F)$ with $\F^{[n]\times [k]}$ by the correspondence $E_{i\,j} \leftrightarrow \mathbf e_{(i,j)}$.

\begin{definition}\label{def:projEEP}If $E' \subset E$ are finite sets, then by $\pi_{E}^{E'} \colon \F^E \to \F^{E'}$ we denote \emph{a standard projection} from $\F^{E}$ on $\F^{E'}$. That is, if $\mathbf x \in \F^E$, then $\pi_{E}^{E'}(\mathbf x)$ is defined by
\[
\pi_{E}^{E'}(\mathbf x)_i = \mathbf x_i\;\;\mbox{for all}\;\; i \in E'.
\]
\end{definition}

\begin{definition}\label{def:ProjectionMap}
Let $E_1$ and $E_2$ be two finite sets and $f\colon E_2 \to E_1$ be an injective function. Denote by $\pi_f \colon \F^{E_1} \to \F^{E_2}$ a linear map defined by
\[
\pi_f(\mathbf v)_{a} = \mathbf v_{f(a)}\;\;\mbox{for all}\;\; \mathbf v \in \F^{E_1}\;\;\mbox{and}\;\;a \in E_2.
\]
We call this map \emph{$f$-projection}. Since $f$ is injective, $\pi_f$ is surjective.
\end{definition}

\begin{definition}If $E$ is a finite set, $i \in E$ its element, then by $\x_i^E \colon \F^E \to \F$ we denote $i$-coordinate function. That is,
\[
\x_i^E(\mathbf x) = \mathbf x_i\;\;\mbox{for all}\;\; \mathbf x \in \F^E.
\]
Thus, the set $\{\x_i \mid i \in E\}$ is a basis of the dual space ${\F^E}^*$.
\end{definition}

\section{Preliminaries from the matroid theory}\label{sec:prelimmatroid}

Let us recall the necessary facts from the matroid theory following~\cite{Oxley2011}.

\begin{definition}[{\cite[Definition on p.7]{Oxley2011}}]\label{def:matroid}A \emph{matroid} $\m$ is an ordered pair $(E, \mathcal I)$ consisting of a finite set $E$ and a collection $\mathcal I$ of subsets of $E$ having the following three properties:
\begin{enumerate}[label=(I\arabic*), ref=(I\arabic*)]
\item\label{def:matroid:I1} $\varnothing \in \mathcal I$.
\item\label{def:matroid:I2} If $I \in \mathcal I$ and $I' \subseteq I$, then $I' \in \mathcal I$.
\item\label{def:matroid:I3} If $I_1$ and $I_2$ are in $\mathcal I$ and $|I_1| < |I_2|$, then there is an element $e$ of $I_2 \setminus I_1$
such that $I_1 \cup \{e\} \in \mathcal I$.
\end{enumerate}
If $\m$ is the matroid $(E, \mathcal I)$, then $\m$ is called a \emph{matroid on} $E$. We
shall often write $\mathcal I(\m)$ for $\mathcal I$ and $E(\m)$ for $E$. The members of $I$ are the \emph{independent sets} of $M$, and $E$ is the \emph{ground set} of $\m$. A subset of $E$ that is not in $\mathcal I$ is called \emph{dependent}.
\end{definition}

\begin{definition}[Cf.~{\cite[Proposition~1.1.1]{Oxley2011}}]\label{def:VecMat}Let $A \in \M_{m\,n}(\F)$. By $\m[A]$ we denote a matroid with $E$ being  the set of column labels of  $A$ and $\mathcal I$ being the set of subsets $X$ of $E$ for which the multiset of columns labelled by $X$ is a set and is linearly independent in the vector space $\F^m$. This matroid is called the \emph{vector matroid} of $A$.
\end{definition}

It is indeed a matroid by~\cite[Proposition~1.1.1]{Oxley2011}.

\begin{definition}[Cf.~{\cite[Definition on p.~15]{Oxley2011}}]Let $\m$ be a matroid. We call a maximal
independent set in $\m$ a \emph{basis} or a \emph{base} of $\m$. By $\mathcal B(\m)$ we denote the set of all bases of $\m$.
\end{definition}

\begin{definition}[Cf.~{\cite[Definition on p.~20]{Oxley2011}}]\label{def:RestrictionMatroid}
Let $\m$ be the matroid $(E, \mathcal I)$ and suppose that $X \subseteq E$. Let $\mathcal I|X = \{I \subseteq X \colon
I \in \mathcal I\}$. Then it is easy to see that the pair $(X, \mathcal I|X)$ is a matroid. We call this matroid the \emph{restriction of $\m$ to $X$} or the \emph{deletion of $E \setminus X$ from $\m$}. It is denoted by $\m|X$ or $\m\setminus (E\setminus X)$.
\end{definition}
\begin{definition}Let $\m$ be the matroid $(E, \mathcal I)$ and $X \subseteq E$. We define the \emph{rank} $r(X)$ of $X$ to be the cardinality of a basis $B$ of $\m|X$ and call such a set $B$ a \emph{basis of} $X$.

We often write $r$ as $r_{\m}$. In addition, we usually write $r(\m)$ for $r(E(\m))$.
\end{definition}

\begin{lemma}[{Cf.~\cite[Lemma 1.3.1 and text on the top of p.~21]{Oxley2011}}]\label{lem:matroidRankProp}The rank function $r$ of a matroid $\m$ on a set $E$ has the following
properties:
\begin{enumerate}[label=(R\arabic*), ref=(R\arabic*)]
\item\label{lem:matroidRankProp:R1} If $X \subseteq E$, then $0 \le r(X) \le |X|$.
\item\label{lem:matroidRankProp:R2} If $X \subseteq Y \subseteq E$, then $r(X) \le r(Y)$.
\item\label{lem:matroidRankProp:R3} If $X$ and $Y$ are subsets of $E$, then
\[
r(X \cup Y ) + r(X \cap Y ) \le r(X) + r(Y ).
\]
\end{enumerate}
\end{lemma}
\begin{lemma}[{See~\cite[Theorem 1.3.2]{Oxley2011}}]\label{lem:RankPropImplyMatroid}Let $E$ be a set and $r$ be a function that maps $2^E$ into the set of non-negative integers and satisfies \ref{lem:matroidRankProp:R1}–\ref{lem:matroidRankProp:R3}. Let $\mathcal I$ be the collection of subsets $X$ of $E$ for which $r(X) = |X|$. Then $(E, \mathcal I)$ is a matroid having rank function $r$.
\end{lemma}

\begin{definition}[{\cite[Definition on p.~65]{Oxley2011}}] Let $\m$ be a matroid. The matroid, whose ground set is $E(\m)$ and whose set of bases is $\{E(M)\setminus B \colon B \mbox{ is the basis of } \m\}$, is called the \emph{dual} of $\m$ and is denoted by $\m^{*}$.
\end{definition}

$\m^*$ is indeed a matroid by~\cite[Theorem~2.1.1]{Oxley2011}.

\begin{definition}[Cf.~{\cite[Definitions on p.~65 and p.~67]{Oxley2011}}]
The bases of $\m^{*}$ are called \emph{cobases} of $\m$. A similar convention applies to other distinguished subsets of $E(\m^*)$. Hence, for example, the independent sets and spanning sets of $\m^*$ are called coindependent sets and cospanning sets of $\m,$ and $r^*$ denotes $r_{\m^*}$ and is called a \emph{corank} function of $\m$.
\end{definition}

\begin{lemma}[Cf.~{\cite[Proposition~2.1.9]{Oxley2011}}]\label{lem:DualMatroidRank}For all subsets $X$ of the ground set $E$ of a matroid $\m$,
\begin{equation*}
r^*(X)+ r(E) = r(E\setminus X) + |X|.
\end{equation*}
\end{lemma}

\begin{lemma}[{See~\cite[Lemma~2.1.10]{Oxley2011}}]\label{lem:MatroidIndepCoindepSep}
Let $I$ and $I^*$ be disjoint subsets of $E(\m)$ such that $I$ is independent and $I^*$ is coindependent. Then $\m$ has a basis $B$ and a cobasis $B^*$ such that $B$ and $B^*$ are disjoint, $I \subseteq B$, and $I^* \subseteq B^*$.
\end{lemma}

\begin{definition}[Cf.~{\cite[Definition on p.~100]{Oxley2011}}]\label{def:ContractionMatroid}Let $\m$ be a matroid on $E$, and $T$ be a subset of $E$. Let $\m/T$, the \emph{contraction of $T$ from $\m$}, be given by
\[
\m/T = (\m^*\setminus T)^*.
\]
\end{definition}

\begin{lemma}[{C.f.~\cite[Proposition 3.1.6]{Oxley2011}}]\label{lem:ContractionRank}
If $T \subseteq E$, then, for all $X \subseteq E \setminus T$,
\[
 r_{\m/T}(X) = r_\m(X \cup T) - r_\m(T).
 \]
\end{lemma}

\begin{corollary}\label{cor:ContractionIndependent}Let $\m$ be a matroid on $E$ and $T$ be an independent set of $\m$. Then $X \supseteq T$ is an independent set of $M$ if and only if $X \setminus T$ is an independent set of $\m/T$. 
\end{corollary}

For purpose of the proof of the main theorem of this article we need to provide the correspondence between the families of coindependent sets and coranks of matroid $\m$ and contraction matroid $\m|S$.
%
%
%
%

\begin{lemma}\label{lem:MatroidCobaseRestriction}Let $\m$ be a matroid on $E$, $S \subseteq E$ be a set and $I^* \in \mathcal I(\m^*)$. Then
\begin{enumerate}[label=(\alph*), ref=(\alph*)]
\item\label{lem:MatroidCobaseRestriction:part1} If ${B'}^* \in \mathcal B(\m|S)$, then $\left(I^* \setminus S\right) \cup {B'}^* \in \mathcal I(\m)$.
\item\label{lem:MatroidCobaseRestriction:part2} $r^*(\m|S) \le r^*(\m) - \left|I^* \setminus S\right|.$
\end{enumerate}
\end{lemma}

\begin{proof}
\textbf{\ref{lem:MatroidCobaseRestriction:part1}}\quad Since ${B'}^*$ is a cobasis of $\m|S$, $B' = \left(S \setminus {B'}^*\right) \subseteq S$ is an independent set of $\m|S$. Hence, $B'$ is an independent set of $\m$ by the definition of $\m|S$. 

Let $I^*_S = I^* \setminus S \in \mathcal I(\m^*)$. Then from $I^* \in \mathcal I(\m^*)$ we have that $I^*_S \in \mathcal I(\m^*)$. In addition, $I^*_S \cap S = \varnothing$.

Thus, $I^*_S$ and $B'$ are correspondingly coindependent and independent subsets of $\m$  such that  $I^*_S \cap B' = \varnothing$. Hence, from Lemma~\ref{lem:MatroidIndepCoindepSep} we conclude that there exist a basis $B_0$ and a cobasis $B_0^*$ of $\m$ such that  $B_0 \cap B_0^* = \varnothing$, $B_0 \supseteq B'$ and 
\begin{equation}\label{MatroidCobaseRestriction:eq101}
B_0^* \supseteq I^*_S.
\end{equation}

The inclusion $B_0 \supseteq B'$ implies the inclusion 
\begin{equation}\label{MatroidCobaseRestriction:eq1}
B_0 \cap S \supseteq B' \cap S = B'.
\end{equation}

The definition of $\m|S$ implies that $B_0 \cap S$ is an independent set of $\m|S$. Thus, having the inclusion~\eqref{MatroidCobaseRestriction:eq1} and the fact that $B'$ is base of $\m|S$, we obtain that 

\begin{equation}\label{MatroidCobaseRestriction:eq2}
B_0 \cap S = B'.
\end{equation}
Since $E = B_0 \sqcup B_0^*$ and ${B'} \sqcup {B'}^* = S \subseteq E$, it follows from~\eqref{MatroidCobaseRestriction:eq2} that $B_0^* \cap S = {B'}^*$. Hence, 
\begin{equation}\label{MatroidCobaseRestriction:eq102}
B_0^* \supseteq {B'}^*.
\end{equation}
From~\eqref{MatroidCobaseRestriction:eq101} and~\eqref{MatroidCobaseRestriction:eq102} we conclude that
$$B_0^* \supseteq I^*_S\cup {B'}^* = \left(I^* \setminus S\right) \cup {B'}^*.$$
Therefore, $\left(I^* \setminus S\right) \cup {B'}^*$ is a coindependent set of $\m$. 

\paragraph{\ref{lem:MatroidCobaseRestriction:part2}} Let ${B'}^* \subseteq S$ be any cobasis of $\m|S$. Then $r^*\left(\m|S\right) = \left|{B'}^*\right|$ by the definition of the rank function. From the part~\ref{lem:MatroidCobaseRestriction:part1} of the lemma we conclude that  $\big(\left(I^* \setminus S\right) \cup {B'}^*\big) \in \mathcal I(m^*)$ and consequently
\begin{equation*}
r^*(\m) \ge \left|\left(I^* \setminus S\right) \cup {B'}^*\right|.
\end{equation*}
Let us consider the right hand side of this inequality. Since $\left(I^* \setminus S\right) \cap {B'}^*  = \varnothing$,
\begin{equation*} 
\left|\left(I^* \setminus S\right) \cup {B'}^*\right|
= \left|I^* \setminus S\right| + \left|{B'}^*\right| = \left|I^* \setminus S\right| + r^*\left(\m|S\right).
\end{equation*}
Thus,
\[
r^*(\m) \ge \left|\left(I^*\right) \setminus S\right| + r^*(\m|S).
\]
which is equivalent to the required inequality.
\end{proof}

The following corollary is a direct consequence of the definition of contraction matroid and Lemma~\ref{lem:MatroidCobaseRestriction} applied to the contraction matroid $\m/T$ instead of $\m$ and $I^*\setminus T$ instead of $I^*$. It will be used in sequel.

\begin{corollary}\label{lem:MatroidCobaseContractionRestriction}Let $\m$ be a matroid on $E$, $T \subseteq E$, $S \subseteq E \setminus T$ be two sets and $I^* \mathcal I(\m^*)$. Then
\begin{enumerate}[label=(\alph*), ref=(\alph*)]
\item\label{lem:MatroidCobaseContractionRestriction:part1} If ${B'}^* \in \mathcal B((\m/T|S)^*)$, then $\big(\left(\left(I^* \setminus T\right) \setminus S\right) \cup {B'}^*\big) \in \mathcal I(\m^*)$.
\item\label{lem:MatroidCobaseContractionRestriction:part2} $r^*\left(\m/T|S\right) \le r^*\left(\m\right) - \left|\left(I^* \setminus T\right) \setminus S\right|.$
\end{enumerate}
\end{corollary}

\section{Matroids corresponding to vector spaces and linear varieties}\label{sec:matroidlinvar}

In this section we introduce a matroid $\m(\K)$ corresponding to a given linear variety $\K$ (Definition~\ref{def:MatroidMK}). After that, we study the behaviour of $\m(\K)$ while $\K$ is transformed using the projection map or intersected with a slice of $\F^S$ (Definition~\ref{def:ProjectionMap} and Definition~\ref{def:Slice}). The corresponding results are stated and proved in Lemma~\ref{lem:ALSLifting} and Lemma~\ref{lem:LinVarSecProj}. We also prove that a matroid linear variety and vector matroid of a matrix in its equational representation are dual to each other (Lemma~\ref{lem:IdepMatroidCorr}).

\begin{definition}\label{def:matSubspace}Let $V \subseteq \F^E$ be a vector space. Then by $\m(V)$ we denote a matroid determined by rank function $r(X) = \dim \left(\Span\left(\x^E_i|_V \mid i \in X\right)\right)$.
\end{definition}

Since $r$ satisfies the properties~\ref{lem:matroidRankProp:R1}--\ref{lem:matroidRankProp:R3}, $\m(V)$ is indeed a matroid by Lemma~\ref{lem:RankPropImplyMatroid}.

\begin{remark}Every matroid of a subspace $V\subseteq F$ defined above can be regarded as an example of a matroid \emph{representable over} $\F$ (see~\cite[Chap.~9.1, definition on p.~136]{welsh_matroid_1986}).
\end{remark}

\begin{definition}\label{def:matLinVar}\label{def:MatroidMK}Let $\K \subseteq \F^E$ be a linear variety. Then by $\m(\K)$ we denote \emph{a matroid of the linear variety} $\K$ which is defined by $\m(\K) = \m(V)$, where $V \subseteq F^E$ is the vector space belonging to $\K$. 
\end{definition}

\begin{corollary}\label{cor:matLinVarDim}If $\K$ is a linear variety, then $r(\m(\K)) = \dim(\K)$ and $r^*(\m(\K)) = \codim(\K)$.
\end{corollary}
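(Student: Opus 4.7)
The plan is to unwind the definitions so that the rank and corank of $\m(\K)$ become visible as dimensions of familiar spaces, and then invoke the matroid duality identity from Lemma~\ref{lem:DualMatroidRank}. Let $V \subseteq \F^E$ be the vector space belonging to $\K$, so that $\dim(\K) = \dim(V)$ and $\codim(\K) = |E| - \dim(V)$ by the conventions fixed in Section~\ref{sec:prelimlinal}. By Definition~\ref{def:MatroidMK} we have $\m(\K) = \m(V)$, and by Definition~\ref{def:matSubspace} the rank function of this matroid is $r(X) = \dim\bigl(\Span(\x_i|_V \mid i \in X)\bigr)$.

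First I would verify the equality $r(\m(\K)) = \dim(\K)$. Applying the rank formula with $X = E$ gives
\[
r(\m(\K)) \;=\; \dim\bigl(\Span(\x_i|_V \mid i \in E)\bigr).
\]
The set $\{\x_i \mid i \in E\}$ is a basis of the dual space ${\F^E}^*$, and the restriction map ${\F^E}^* \to V^*$ sending $\varphi \mapsto \varphi|_V$ is surjective because every linear functional on $V$ extends to $\F^E$. Hence the family $(\x_i|_V)_{i \in E}$ spans the whole of $V^*$, giving
\[
\dim\bigl(\Span(\x_i|_V \mid i \in E)\bigr) \;=\; \dim(V^*) \;=\; \dim(V) \;=\; \dim(\K).
\]

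Second, I would derive the corank formula from the rank formula via duality. Apply Lemma~\ref{lem:DualMatroidRank} to the subset $X = E$ of the ground set of $\m(\K)$:
\[
r^*(E) + r(E) \;=\; r(E \setminus E) + |E| \;=\; r(\varnothing) + |E| \;=\; |E|,
\]
since $r(\varnothing) = 0$ by property~\ref{lem:matroidRankProp:R1}. Substituting the first part of the corollary yields
\[
r^*(\m(\K)) \;=\; |E| - r(\m(\K)) \;=\; |E| - \dim(\K) \;=\; \codim(\K),
\]
as required.

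There is no serious obstacle here; the only mild subtlety is the identification of $\Span(\x_i|_V \mid i \in E)$ with $V^*$, which rests on the elementary fact that any linear functional on a subspace extends to the ambient space, so the restriction map on duals is surjective. Everything else is a direct bookkeeping with Definitions~\ref{def:matSubspace}, \ref{def:MatroidMK} and the duality formula of Lemma~\ref{lem:DualMatroidRank}.
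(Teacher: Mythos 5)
Your proof is correct and follows the only natural route: the paper states this corollary without proof as an immediate consequence of Definitions~\ref{def:matSubspace} and~\ref{def:MatroidMK}, and your write-up supplies exactly the expected details (the restriction map $({\F^E})^* \to V^*$ being surjective, so the $\x_i|_V$ span $V^*$, plus Lemma~\ref{lem:DualMatroidRank} with $X=E$ for the corank). No gaps.
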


\begin{lemma}\label{lem:IndepEveryValVectors}Let $V \subseteq \F^E$ be a vector space, $X \subseteq E$. Then a linear map $\pi_E^X|_V$ is a surjection if and only if $X$ is an independent set of $\m(V)$.
\end{lemma}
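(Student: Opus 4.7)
The plan is to translate surjectivity of $T := \pi_E^X|_V \colon V \to \F^X$ into a statement about the restricted coordinate functionals $\x_i^E|_V \in V^*$, which is exactly what Definition~\ref{def:matSubspace} measures. Since $T$ is defined by $T(v)_i = \x_i^E(v)$ for $i \in X$ and $v \in V$, the image of $T$ is already recorded by these very functionals, so all that is needed is a duality argument.

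First, I would unpack both sides of the equivalence. The right-hand side, $X$ independent in $\m(V)$, unpacks via Definition~\ref{def:matSubspace} to $r(X) = |X|$, i.e.\ the family $\{\x_i^E|_V : i \in X\}$ is linearly independent in $V^*$. The left-hand side, $T$ surjective, says $\Img(T) = \F^X$.

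Next, I would introduce the transpose $\Phi \colon \F^X \to V^*$ defined by $\Phi(a)(v) = \sum_{i \in X} a_i v_i = \sum_{i \in X} a_i\, T(v)_i$. Two properties of $\Phi$ carry the proof. (i) Since $\Phi(\mathbf e_i) = \x_i^E|_V$, one has $\Img(\Phi) = \Span(\x_i^E|_V : i \in X)$, so $\Phi$ is injective iff the family $\{\x_i^E|_V : i \in X\}$ is linearly independent iff $r(X) = |X|$. (ii) $\Ker(\Phi)$ consists of those $a \in \F^X$ for which $\sum_i a_i T(v)_i = 0$ for every $v \in V$, that is, the $a$ annihilating $\Img(T)$ under the standard bilinear pairing on $\F^X$; in the finite-dimensional space $\F^X$ this annihilator is trivial iff $\Img(T) = \F^X$.

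Chaining these equivalences finishes the proof: $T$ is surjective iff $\Ker\Phi = \{0\}$ iff $\Phi$ is injective iff $\{\x_i^E|_V : i \in X\}$ is linearly independent in $V^*$ iff $X$ is independent in $\m(V)$. There is no serious obstacle; the only subtlety is notational, namely to distinguish the coordinate function $\x_i^E \in (\F^E)^*$, its restriction $\x_i^E|_V \in V^*$, and the standard basis vector $\mathbf e_i \in \F^X$ used to identify $\Phi$ with the transpose of $T$.
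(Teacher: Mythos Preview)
Your proposal is correct and follows essentially the same route as the paper: both introduce the transpose (dual) map sending $\F^X$ (or $(\F^X)^*$) to $V^*$ via $\mathbf e_i \mapsto \x_i^E|_V$, then use the standard fact that a linear map is surjective iff its transpose is injective, and finally identify injectivity of the transpose with linear independence of the restricted coordinate functionals. The only cosmetic difference is that you spell out the annihilator argument for ``surjective $\Leftrightarrow$ transpose injective'' explicitly, whereas the paper simply cites this as a textbook duality fact.
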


\begin{proof}Let  $\phi \colon \left(\F^X\right)^* \to V^*$ be defined by  $\phi(f) = \sum_{i \in X} f(\mathbf e_i) \x_i|_V$. Note that 
\begin{equation}\label{lem:XndepEveryValVectors:eq1}
\phi = \left(\pi_E^X|_V\right)^*.
\end{equation}
Indeed, since $\x_{i}^X \circ \pi_E^X|_V = \x_{i}^E|_V$ for all $i \in X$, then $\left(\pi_E^X|_V\right)^*\left(\x_i^X\right) = \phi(\x_i^X)$ for all $i \in X$. By linearity, the equality~\eqref{lem:XndepEveryValVectors:eq1} holds since it holds on the basis $\{\x_i^X\;\mid\; i \in X\}$ of $\left(\F^X\right)^*$.

Using the basic properties of dual vector spaces (see~\cite[Sec.~2.7, Exercise~20 on p.~127]{Friedberg2002-wu}) we conclude from~\eqref{lem:XndepEveryValVectors:eq1}  that 
\begin{equation}\label{lem:XndepEveryValVectors:eq2}
\pi_E^X|_V\;\;\mbox{is a surjection} \Leftarrow\joinrel=\joinrel\Rightarrow \phi\;\;\mbox{is an injection}.
\end{equation} 

Since $\phi(\x^X_i) = \x^E_i|_V$ for all $i \in X$ by the definition of $\phi$ and the set $\{\x^X_i \mid i \in X\}$ is a basis of $\left(\F^X\right)^*$, then
\begin{equation}\label{lem:XndepEveryValVectors:eq3}
\phi\;\;\mbox{is an injection} \Leftarrow\joinrel=\joinrel\Rightarrow \dim \left(\Span\left(\x^E_i|_V \mid i \in X\right)\right) = |X|.
\end{equation} 

The definition of $\m(V)$ implies that 
\begin{equation}\label{lem:XndepEveryValVectors:eq4}
\dim \left(\Span\left(\x^E_i|_V \mid i \in X\right)\right) = |X|  \Leftarrow\joinrel=\joinrel\Rightarrow X \in \mathcal I(\m(V)) .
\end{equation} 

Thus, the required equivalence is obtained by aligning the equivalences~\eqref{lem:XndepEveryValVectors:eq2}--\eqref{lem:XndepEveryValVectors:eq4} together.
\end{proof}

\begin{corollary}\label{cor:IndepEveryVal}Let $\K \subseteq \F^E$ be a linear variety. Then $I \in \mathcal I(\m(\K))$ if and only if for every $\mathbf c \in \F^I$ there exists $\mathbf x \in \K$ such that $\mathbf x_i = \mathbf c_i$ for all $i \in I$.
\end{corollary}

Let us study the behaviour of $\m(\K)$ under projections and restrictions. The next lemma provide a correspondence between a matroid of linear variety and a matroid of its $f$-projection.

\begin{lemma}\label{lem:ALSLifting}
Let $E_1$ and $E_2$ be two finite sets, $f\colon E_2 \to E_1$ be an injective function, $\K \subseteq \F^{E_1}$ be a linear variety. Then
\begin{enumerate}[label=(\alph*), ref=(\alph*)]
\item\label{lem:ALSLifting:part4} $r_{\m(\pi_f(\K))}(X) = r_{\m(\K)}(f(X))$ for all $X \subseteq E_2$;
\item\label{lem:ALSLifting:part2} $f$ is an isomorphism of matroids $\m(\pi_f(\K))$ and $\m(\K)|f(E_2)$ (restriction matroid);
\end{enumerate}
\end{lemma}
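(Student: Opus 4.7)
The plan is to transfer the question from matroids to their defining coordinate functionals, using the identity $\x_i^{E_2} \circ \pi_f = \x_{f(i)}^{E_1}$ which is immediate from Definition~\ref{def:ProjectionMap}.

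Let $V \subseteq \F^{E_1}$ be the vector space belonging to $\K$. First I would invoke Lemma~\ref{lem:LinImageAS} to get that $\pi_f(V)$ is a vector space belonging to $\pi_f(\K)$, so that by Definition~\ref{def:MatroidMK} we have $\m(\pi_f(\K)) = \m(\pi_f(V))$. Thus part (a) reduces to proving
\[
\dim\!\bigl(\Span(\x_i^{E_2}|_{\pi_f(V)}\mid i\in X)\bigr) = \dim\!\bigl(\Span(\x_j^{E_1}|_V \mid j \in f(X))\bigr).
\]

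The key step is to define a map $\phi\colon \Span(\x_i^{E_2}|_{\pi_f(V)}\mid i\in X) \to \Span(\x_{f(i)}^{E_1}|_V \mid i \in X)$ by $\x_i^{E_2}|_{\pi_f(V)} \mapsto \x_{f(i)}^{E_1}|_V$ and check that it is a well-defined linear isomorphism. For this, it suffices to prove that for any scalars $(c_i)_{i \in X}$,
\[
\sum_{i \in X} c_i\, \x_i^{E_2}\big|_{\pi_f(V)} = 0 \iff \sum_{i \in X} c_i\, \x_{f(i)}^{E_1}\big|_V = 0.
\]
The left equality means $\sum_i c_i\, \x_i^{E_2}(\mathbf w)=0$ for all $\mathbf w \in \pi_f(V)$. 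Since $\pi_f|_V$ surjects onto $\pi_f(V)$, we may write $\mathbf w = \pi_f(\mathbf v)$ with $\mathbf v \in V$, and then $\x_i^{E_2}(\pi_f(\mathbf v)) = \mathbf v_{f(i)} = \x_{f(i)}^{E_1}(\mathbf v)$, which gives exactly the right equality. Hence $\phi$ is a bijection on spanning sets, proving the two dimensions agree. Since $f$ is injective, $|f(X)|=|X|$ and $\{\x_{f(i)}^{E_1}|_V \mid i \in X\} = \{\x_j^{E_1}|_V \mid j \in f(X)\}$, so the second span equals $\Span(\x_j^{E_1}|_V \mid j \in f(X))$, whose dimension is $r_{\m(\K)}(f(X))$ by Definition~\ref{def:matSubspace}. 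This establishes part (a).

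For part (b), note that by Definition~\ref{def:RestrictionMatroid} the restriction $\m(\K)|f(E_2)$ has ground set $f(E_2)$, and for every $Y \subseteq f(E_2)$ one has $r_{\m(\K)|f(E_2)}(Y) = r_{\m(\K)}(Y)$. Since $f$ is injective, the corestriction $f\colon E_2 \to f(E_2)$ is a bijection, and part (a) rewrites as $r_{\m(\pi_f(\K))}(X) = r_{\m(\K)|f(E_2)}(f(X))$ for every $X \subseteq E_2$. A rank-preserving bijection between ground sets is precisely a matroid isomorphism, which finishes part (b).

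I expect the only subtle point to be a careful verification that $\phi$ is well-defined: the implication ``$\sum c_i\,\x_i^{E_2}|_{\pi_f(V)}=0$ implies $\sum c_i\,\x_{f(i)}^{E_1}|_V=0$'' requires the surjectivity of $\pi_f|_V$ onto $\pi_f(V)$ (which is immediate from the definition of the image), while the reverse implication uses only the identity $\x_i^{E_2}\circ \pi_f = \x_{f(i)}^{E_1}$. Everything else is formal once part (a) is in hand.
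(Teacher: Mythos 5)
Your proposal is correct and follows essentially the same route as the paper: reduce to the vector spaces $V$ and $\pi_f(V)$ via Lemma~\ref{lem:LinImageAS}, and match the coordinate functionals $\x_i^{E_2}|_{\pi_f(V)}$ with $\x_{f(i)}^{E_1}|_V$ through the surjection $\pi_f|_V$. The only cosmetic difference is that the paper packages your hand verification that linear relations correspond as the statement that the dual map $(\pi_f|_V)^*$ is injective; the content is identical, and part (b) is handled the same way in both.
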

\begin{proof}\textbf{\ref{lem:ALSLifting:part4}}\quad Let $V\subseteq \F^{E_1}$ be vector space belonging to $\K$. Then Lemma~\ref{lem:LinImageAS} implies that $\pi_f(V)$ is a vector space belonging to $\pi_f(\K)$. Let us consider $\pi_f|_V$ as a linear map on its image, that is, we assume that $\pi_f|_V$ sends $V$ on $\pi_f(V)$. Note that this together with the definition of $\pi_f$ implies that 
\begin{equation}\label{lem:ALSLifting:eq1}
\left(\pi_f|_V\right)^* (\x_i^{E_2}|_{\pi_f(V)}) = \x_{f(i)}^{E_1}|_V\;\;\mbox{for all}\;\; i \in E_2.
\end{equation}

Since $\pi_f|_V$ is a surjection, $\left(\pi_f|_V\right)^*$ is an injection. Hence, using~\eqref{lem:ALSLifting:eq1} we conclude that
\begin{equation}\label{lem:ALSLifting:eq3}
\dim \left(\Span\left(\x_i^{E_2}|_{\pi_f(V)}\;\mid\; i \in X\right)\right) = \dim \left(\Span\left(\x_{f(i)}^{E_1}|_V\;\mid\; i \in f(X)\right)\right)\;\;\mbox{for all}\;\;X \subseteq E_2.
\end{equation}
Therefore,
\begin{multline}\label{lem:ALSLifting:eq2}
r_{\m(\pi_f(\K))}(X) = \dim \left(\Span\left(\x_i^{E_2}|_{\pi_f(V)}\;\mid\; i \in X\right)\right)\\
  \overset{\eqref{lem:ALSLifting:eq3}}{=\joinrel=\joinrel=}\dim \left(\Span\left(\x_{f(i)}^{E_1}|_V\;\mid\; i \in f(X)\right)\right) = r_{\m(\K)}(f(X))\;\;\mbox{for all}\;\;X \subseteq E_2
\end{multline}
by the definition of the rank function.

\paragraph{\ref{lem:ALSLifting:part2}} It follows directly from the part~\ref{lem:ALSLifting:part4} of the lemma and the definition of restriction matroid.
\end{proof}

\begin{definition}Let $E_1$ and $E_2$ be two finite sets such that $E_2 \subseteq E_1$, and $V \subseteq \F^{E_1}$ be a vector space. Then by $V_{E_2} \subseteq \F^{E_1}$ we denote a vector space defined by 
$$V_{E_2} = \{\mathbf x \in V\,\mid\, \mathbf x_i = 0\;\mbox{for all}\;\; i \in E_2\}.$$
\end{definition}

\begin{lemma}\label{lem:VectorEmbeddingMatroid}Let $E_1$ and $E_2$ be two finite sets such that $E_2 \subseteq E_1$, and $V \subseteq \F^{E_1}$ be a vector space. Then
\begin{enumerate}[label=(\alph*), ref=(\alph*)]
\item\label{lem:VectorEmbeddingMatroid:part1} 
\begin{equation}\label{lem:VectorEmbeddingMatroid:eq3}
r_{\m(V_{E_2})}(X) = r_{\m(V)}\left(X\cup E_2\right) - r_{\m(V)}\left(E_2\right)\;\;\mbox{for all}\;\;X \subseteq \left(E_1 \setminus E_2\right)
\end{equation}
\item\label{lem:VectorEmbeddingMatroid:part2} $\m(V_{E_2})\setminus E_2 = \m(V)/E_2$.
\end{enumerate}
\end{lemma}
\begin{proof}\textbf{\ref{lem:VectorEmbeddingMatroid:part1}}\quad The definition of $V_{E_2}$ implies that 
\begin{equation}\label{lem:VectorEmbeddingMatroid:eq2}
\x_{i}^{E_1}|_{V_{E_2}} = 0\;\;\;\;\mbox{for all}\;\;i \in E_2
\end{equation}
Let $X \subseteq \left(E_1 \setminus E_2\right)$ and let $\phi \colon \Span\left(\x_i^{E_1}|_{V}\,\mid\, i \in \left(X \cup E_2\right)\right) \to \Span\left( \x_i^{E_1}|_{V_{E_2}}\,\mid\, i \in X\right)$ be a linear map defined by
\[
\phi\left(\sum_{i \in \left(X \cup E_2\right)}\lambda_i \x_i^{E_1}|_{V}\right) = \sum_{i \in X}\lambda_i \x_i^{E_1}|_{V_{E_2}}.
\] 
The equality~\eqref{lem:VectorEmbeddingMatroid:eq2} implies that $\phi$ is defined properly, i.e. does not depend of the representation of an element of $\Span\left(\x_i^{E_1}|_{V}\,\mid\, i \in \left(X \cup E_2\right)\right)$ as a linear combination $\sum_{i \in \left(X \cup E_2\right)}\lambda_i \x_i^{E_1}|_{V}$. Therefore, the following sequence is exact
\begin{multline*}
0 \to \Span\left(\x_i^{E_1}|_{V}\,\mid\, i \in E_2\right)  \hookrightarrow \Span\left(\x_i^{E_1}|_{V}\,\mid\, i \in \left(X \cup E_2\right)\right)\\
\overset{\phi}{\longrightarrow} \Span\left(\x_i^{E_1}|_{V_{E_2}}\,\mid\, i \in X\right) \to 0.
\end{multline*}
Hence,
\begin{multline}\label{lem:VectorEmbeddingMatroid:eq2}
\dim\left(\Span\left(\x_i^{E_1}|_{V}\,\mid\, i \in E_2\right)\right) + \dim\left(\Span\left(\x_i^{E_1}|_{V_{E_2}}\,\mid\, i \in X\right)\right)\\
= \dim\left(\Span\left(\x_i^{E_1}|_{V}\,\mid\, i \in \left(X \cup E_2\right)\right)\right).
\end{multline}
By the definition of the corresponding rank functions,
\begin{multline}\label{lem:VectorEmbeddingMatroid:eq3}
\dim\left(\Span\left(\x_i^{E_1}|_{V}\,\mid\, i \in E_2\right)\right) = r_{\m(V)} \left(E_2\right),\\ \dim\left(\Span\left(\x_i^{E_1}|_{V_{E_2}}\,\mid\, i \in X\right)\right) = r_{\m(V_{E_2})}(X)\quad\mbox{and}\\
\dim\left(\Span\left(\x_i^{E_1}|_{V}\,\mid\, i \in \left(X \cup E_2\right)\right)\right) = r_{\m(V)}\left(X \cup E_2\right).
\end{multline}
By substituting~\eqref{lem:VectorEmbeddingMatroid:eq3} into~\eqref{lem:VectorEmbeddingMatroid:eq2} we conclude that
\[
 r_{\m(V)} \left(E_2\right) +   r_{\m(V_{E_2})}(X) = r_{\m(V)}\left(X \cup E_2\right)
\]
which is equivalent to the equality~\eqref{lem:VectorEmbeddingMatroid:eq3}.

\paragraph{\ref{lem:VectorEmbeddingMatroid:part2}}This follows directly from Lemma~\ref{lem:ContractionRank}, the part~\ref{lem:VectorEmbeddingMatroid:part1} of the lemma and the definitions of the corresponding matroids.
\end{proof}

\begin{corollary}\label{cor:DimEmbRank}Let $E$ be a finite set, $X \subseteq E$ and $V \subseteq \F^{E}$ be a vector space. Then $$\dim \left(V_{E\setminus X}\right) = r\left(\m(V)\right) - r_{\m(V)}(E\setminus X).$$
\end{corollary}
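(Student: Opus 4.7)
The plan is to apply Lemma~\ref{lem:VectorEmbeddingMatroid}\ref{lem:VectorEmbeddingMatroid:part1} with $E_1 = E$ and $E_2 = E \setminus X$, which is permitted since $E_2 \subseteq E_1$. Taking the subset of $E_1 \setminus E_2 = X$ equal to $X$ itself, the lemma yields
\[
r_{\m(V_{E\setminus X})}(X) = r_{\m(V)}\bigl(X \cup (E\setminus X)\bigr) - r_{\m(V)}(E\setminus X) = r_{\m(V)}(E) - r_{\m(V)}(E\setminus X).
\]
By the convention $r(\m(V)) = r_{\m(V)}(E(\m(V))) = r_{\m(V)}(E)$ introduced right after the definition of rank, the right-hand side is precisely $r(\m(V)) - r_{\m(V)}(E\setminus X)$. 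Thus the corollary reduces to verifying the identity $r_{\m(V_{E\setminus X})}(X) = \dim(V_{E\setminus X})$.

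To establish this, I would argue directly from Definition~\ref{def:matSubspace}. By the definition of $V_{E\setminus X}$, every vector $\mathbf v \in V_{E\setminus X}$ satisfies $\mathbf v_i = 0$ for all $i \in E\setminus X$; equivalently, $\x_i^E|_{V_{E\setminus X}} \equiv 0$ for every $i \in E\setminus X$. Consequently the restriction map
\[
V_{E\setminus X} \longrightarrow \F^X,\qquad \mathbf v \longmapsto (\mathbf v_i)_{i\in X},
\]
is injective, so the family $\{\x_i^E|_{V_{E\setminus X}} : i \in X\}$ separates points of $V_{E\setminus X}$ and therefore spans its full dual space. By Definition~\ref{def:matSubspace} we then obtain
\[
r_{\m(V_{E\setminus X})}(X) = \dim\bigl(\Span(\x_i^E|_{V_{E\setminus X}} : i \in X)\bigr) = \dim\bigl((V_{E\setminus X})^*\bigr) = \dim(V_{E\setminus X}),
\]
which, combined with the first display, proves the corollary.

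There is no real obstacle here; the corollary is essentially an immediate consequence of Lemma~\ref{lem:VectorEmbeddingMatroid}\ref{lem:VectorEmbeddingMatroid:part1}, together with the standard identification between the rank of the matroid of a subspace and the dimension of that subspace. Alternatively, one could deduce the identity from part~\ref{lem:VectorEmbeddingMatroid:part2} of the same lemma combined with Lemma~\ref{lem:ContractionRank}, but the direct route above seems shortest.
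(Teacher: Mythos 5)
Your proof is correct and follows the route the paper intends: the corollary is stated without proof as an instance of Lemma~\ref{lem:VectorEmbeddingMatroid}\ref{lem:VectorEmbeddingMatroid:part1} with $E_2 = E\setminus X$, exactly as you apply it. Your additional verification that $r_{\m(V_{E\setminus X})}(X) = \dim(V_{E\setminus X})$ via the point-separating family of coordinate functionals is sound and usefully fills in the one step the paper leaves implicit.
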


In order to generalize Lemma~\ref{lem:VectorEmbeddingMatroid} to linear varieties we need to introduce a notion of slice.

\begin{definition}\label{def:Slice}For $\{e_1,\ldots, e_r\} = E' \subseteq E$ and $\mathbf c' \in \F^{E'}$ we call a \emph{slice} $U(x_{a_1} = \mathbf c_{e_1}, \ldots, x_{e_r} = \mathbf c_{e_r})$ a linear variety of $\F^E$ defined by
\[
U(x_{e_1} = \mathbf c_{e_1}, \ldots, x_{e_r} = \mathbf c_{e_r}) = \mathbf c + \Span\left(\mathbf e_i\;\mid\; i \in E \setminus E'\right),
\]
where $\mathbf c_i = \mathbf c'_i$ for $i \in E'$ and is equal to zero otherwise. Equivalently,

\[
U(x_{e_1} = \mathbf c_{e_1}, \ldots, x_{e_r} = \mathbf c_{e_r}) = \{\mathbf s \in \F^E \mid \mathbf s_e = \mathbf c_e\; \forall e \in E'\}.
\]

We will also use a shorter notation $U(x_{e} = \mathbf c_e\;|\; e \in E')$.
\end{definition}

\begin{corollary}\label{cor:SliceInZero}If $E' \subseteq E$ and $\mathbf c' \in \F^{E'}$, then  $U(x_{e} = 0\;|\; e \in E')$ is a vector space belonging to $U(x_{e} = \mathbf c_e\;|\; e \in E')$.
\end{corollary}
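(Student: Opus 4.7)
The plan is to unfold the definitions directly: by Definition~\ref{def:Slice}, both slices have the same ``direction'' $\Span(\mathbf e_i \mid i \in E\setminus E')$, and they differ only by the translation vector $\mathbf c$. So the proof is essentially a one-line verification.

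First, I would apply Definition~\ref{def:Slice} to the slice $U(x_e = 0 \mid e \in E')$. Here the coefficient vector is $\mathbf 0 \in \F^{E'}$, which extends by zeros to the zero vector in $\F^E$. Hence
\[
U(x_e = 0 \mid e \in E') = \mathbf 0 + \Span(\mathbf e_i \mid i \in E\setminus E') = \Span(\mathbf e_i \mid i \in E\setminus E'),
\]
which is manifestly a vector subspace of $\F^E$.

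Next, applying Definition~\ref{def:Slice} to $U(x_e = \mathbf c_e \mid e \in E')$ gives
\[
U(x_e = \mathbf c_e \mid e \in E') = \mathbf c + \Span(\mathbf e_i \mid i \in E\setminus E') = \mathbf c + U(x_e = 0 \mid e \in E'),
\]
where $\mathbf c \in \F^E$ is the extension of $\mathbf c'$ by zeros. Thus the linear variety $U(x_e = \mathbf c_e \mid e \in E')$ is obtained from the vector space $U(x_e = 0 \mid e \in E')$ by a translation by $\mathbf c$, and so by Definition~\ref{def:SbelLV} the latter is a vector space belonging to the former.

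There is no real obstacle here; the only thing to be careful about is making sure the extension-by-zero convention in Definition~\ref{def:Slice} is applied consistently so that the span part agrees in both expressions.
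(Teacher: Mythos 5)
Your proof is correct and matches the paper's (implicit) argument: the corollary is stated without proof precisely because it follows by unfolding Definition~\ref{def:Slice}, under which both slices share the direction $\Span(\mathbf e_i \mid i \in E\setminus E')$ and differ only by the translation vector $\mathbf c$, so Definition~\ref{def:SbelLV} applies immediately.
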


\begin{lemma}\label{lem:SliceBelong}Let $\K \subseteq \F^E$ be a linear variety, $V$ be a subspace belonging to $\K$, $I$ be an independent set of $\m(\K)$, $\mathbf c \in \F^I$ and $\K_I = \K\cap U(x_{e} = \mathbf c_e\,|\,e \in I)$. Then $\K_I$ is a linear variety and 
$V_I$ is a subspace belonging to $\K_I$.
\end{lemma}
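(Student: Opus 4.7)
The plan is to verify two things: first that $\K_I$ is non-empty (so that the intersection lemmas apply), and second to assemble $V_I$ as the intersection of a subspace belonging to $\K$ with a subspace belonging to the slice.

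First I would handle non-emptiness by invoking Corollary~\ref{cor:IndepEveryVal}. Since $I$ is an independent set of $\m(\K)$ and $\mathbf c \in \F^I$ is arbitrary, there exists $\mathbf x \in \K$ such that $\mathbf x_i = \mathbf c_i$ for every $i \in I$. By the definition of the slice, this $\mathbf x$ lies in $U(x_e = \mathbf c_e \mid e \in I)$ as well, so $\mathbf x \in \K_I$. Hence $\K_I$ is a non-empty intersection of two linear varieties, and by Lemma~\ref{lem:ASIntersec} it is itself a linear variety.

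For the second part I would use Lemma~\ref{lem:ASIntersecVecBelong}: if $V$ is a subspace belonging to $\K$ and $W$ is a subspace belonging to the slice $U(x_e = \mathbf c_e \mid e \in I)$, then $V \cap W$ is a subspace belonging to $\K_I = \K \cap U(x_e = \mathbf c_e \mid e \in I)$. Corollary~\ref{cor:SliceInZero} tells us exactly that $W = U(x_e = 0 \mid e \in I)$ is a subspace belonging to the slice. It remains to observe that, directly from the definitions,
\[
V \cap U(x_e = 0 \mid e \in I) = \{\mathbf x \in V \mid \mathbf x_e = 0 \text{ for all } e \in I\} = V_I.
\]
Substituting back, $V_I$ is a subspace belonging to $\K_I$, which completes the proof.

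The only non-trivial step is the reduction to Corollary~\ref{cor:IndepEveryVal} to show $\K_I \neq \varnothing$; everything else is bookkeeping with the definitions and the previously established intersection lemmas.
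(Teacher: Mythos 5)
Your proof is correct and follows essentially the same route as the paper: non-emptiness of $\K_I$ via Corollary~\ref{cor:IndepEveryVal}, the linear-variety conclusion via Lemma~\ref{lem:ASIntersec}, and the identification of $V_I$ as the subspace belonging to $\K_I$ via Corollary~\ref{cor:SliceInZero} and Lemma~\ref{lem:ASIntersecVecBelong}. No gaps.
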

\begin{proof}Lemma~\ref{lem:ASIntersec} implies that in order to prove that $\K_I$ is a linear variety it is sufficient to show that $\K$ and $U(x_{e} = \mathbf c'_e\,|\,e \in I)$ have non-empty intersection. This follows directly from Corollary~\ref{cor:IndepEveryVal}. 

Next, since $U(x_{e} = 0 | e \in I)$ is a subspace belonging to $U(x_{e} = \mathbf c_e | e \in I)$ by Corollary~\ref{cor:SliceInZero}, then $V \cap U(x_{e} = 0\,|\,e \in I)$ is a subspace belonging to $\K_I$ by Lemma~\ref{lem:ASIntersecVecBelong}. In addition, $V \cap U(x_{e} = 0\,|\,e \in I) =  V_I$ by the definition of $V_I$.
\end{proof}

\begin{remark}If $I$ is not an independent set of $\m(\K)$, then the intersection $\K\cap U(x_{e} = \mathbf c_e\,|\,e \in I)$ could be empty. 
\end{remark}

In the next lemma we provide a correspondence between a matroid of linear variety $\K$ and a matroid of its intersection with slice $U(x_{e} = \mathbf c_e\,|\,e \in I)$ assuming that $I$ is an independent set of $\K$ which follows directly from the correspondence established in Lemma~\ref{lem:SliceBelong}. This lemma together with its generalized version (Lemma~\ref{lem:LinVarSecProj}) will be used in the next section.

\begin{lemma}\label{lem:LinVarSec}
Let $E$ be a finite set, $\K \subseteq \F^{E}$ be a linear variety, $I$ be an independent set of $\m(\K)$, $\mathbf c \in \F^{I}$. Then $\K_I = \K\cap U(x_{e} = \mathbf c_e | e \in I)$ is a linear variety and $\m(\K_{I})\setminus I = \m(\K)/I$. 
\end{lemma}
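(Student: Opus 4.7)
The plan is to reduce this to the vector-space version already established in Lemma~\ref{lem:VectorEmbeddingMatroid}, using Lemma~\ref{lem:SliceBelong} as the bridge that translates statements about the linear variety $\K_I$ into statements about a distinguished vector subspace of $\F^E$.

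First I would fix notation: let $V \subseteq \F^E$ be the vector subspace belonging to $\K$, so by Definition~\ref{def:MatroidMK} we have $\m(\K) = \m(V)$. Applying Lemma~\ref{lem:SliceBelong} with this $V$ and the given independent set $I$ of $\m(\K)$, I obtain that $\K_I = \K \cap U(x_e = \mathbf c_e \mid e \in I)$ is a linear variety (which is the first conclusion of the lemma), and moreover that $V_I$ is a vector subspace belonging to $\K_I$. Consequently $\m(\K_I) = \m(V_I)$ by Definition~\ref{def:MatroidMK}.

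Next I would invoke Lemma~\ref{lem:VectorEmbeddingMatroid}\ref{lem:VectorEmbeddingMatroid:part2} with $E_1 = E$ and $E_2 = I$ (noting that $I \subseteq E$ because $I$ is an independent set of the matroid $\m(\K)$ on $E$). This gives
\[
\m(V_I) \setminus I \;=\; \m(V) / I.
\]
Combining this with the identifications $\m(\K_I) = \m(V_I)$ and $\m(\K) = \m(V)$ from the previous step yields $\m(\K_I) \setminus I = \m(\K)/I$, as required.

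There is no real obstacle here: the entire content is the observation that passing from $\K$ to $\K_I$ corresponds, at the level of the associated vector subspaces, exactly to the operation $V \mapsto V_I$ that Lemma~\ref{lem:VectorEmbeddingMatroid} already analyses. The one point to be checked is that the hypothesis that $I$ is independent in $\m(\K)$ is what guarantees the intersection defining $\K_I$ is nonempty (so that Lemma~\ref{lem:SliceBelong} applies and $\K_I$ is genuinely a linear variety rather than the empty set); this is handled inside Lemma~\ref{lem:SliceBelong} via Corollary~\ref{cor:IndepEveryVal}, and no additional work is needed on our side.
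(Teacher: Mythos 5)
Your proposal is correct and follows essentially the same route as the paper: Lemma~\ref{lem:SliceBelong} supplies that $\K_I$ is a nonempty linear variety with $V_I$ as its associated subspace, and then Lemma~\ref{lem:VectorEmbeddingMatroid}\ref{lem:VectorEmbeddingMatroid:part2} applied to $I \subseteq E$ and $V$ gives $\m(V_I)\setminus I = \m(V)/I$, which is the stated identity after the identifications $\m(\K)=\m(V)$ and $\m(\K_I)=\m(V_I)$. No gaps.
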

\begin{proof}
Lemma~\ref{lem:SliceBelong} implies that $\K_I$ is indeed a linear variety and $V_I$ is a vector space belonging to $\K_I$, where $V$ is a vector space belonging to $\K$. Therefore, by Lemma~\ref{lem:VectorEmbeddingMatroid}\ref{lem:VectorEmbeddingMatroid:part2} applied to $I, E$ and $V$, we obtain the statement of the lemma.  
\end{proof}

\begin{lemma}\label{lem:LinVarSecProj}
Let $E_1, E_2$ be finite sets, $\K \subseteq \F^{E_1}$ be a linear variety, $I$ be an independent set of $\m(\K)$, $\mathbf c \in \F^{I}$,   $f\colon E_2 \to E_1$ be an injective function. Then  
$f$ provides an isomorphism between the matroids $\m(\pi_f(\K_I))\setminus f^{-1}(I)$ and $(\m(\K)/I)|\left(f(E_2)\setminus I\right)$, where $\K_I$ is defined by $\K_I = \K\cap U(x_{e} = \mathbf c_e | e \in I)$ 
\end{lemma}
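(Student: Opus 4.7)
The plan is to chain together Lemma~\ref{lem:ALSLifting} (which relates the matroid of an $f$-projection with the corresponding restriction matroid) with Lemma~\ref{lem:LinVarSec} (which relates the matroid of a slice with a contraction matroid). First observe that $\K_I$ is a non-empty linear variety by Lemma~\ref{lem:SliceBelong} (using that $I$ is independent in $\m(\K)$), so both matroids appearing in the statement are well-defined. Since $f$ is injective, its restriction to $E_2 \setminus f^{-1}(I)$ is a bijection onto $f(E_2) \setminus I$, and this is the candidate isomorphism that I will verify preserves matroid structure.

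To check that $f$ is indeed a matroid isomorphism, I would compare rank functions on an arbitrary $X \subseteq E_2 \setminus f^{-1}(I)$. By the definition of matroid restriction,
\[
r_{\m(\pi_f(\K_I))\setminus f^{-1}(I)}(X) = r_{\m(\pi_f(\K_I))}(X),
\]
and applying Lemma~\ref{lem:ALSLifting}\ref{lem:ALSLifting:part4} to the linear variety $\K_I$ yields $r_{\m(\pi_f(\K_I))}(X) = r_{\m(\K_I)}(f(X))$. Since $f$ is injective and $X \cap f^{-1}(I) = \varnothing$, we have $f(X) \cap I = \varnothing$, in particular $f(X) \subseteq E_1 \setminus I$; restriction does not change the rank of a subset of its ground set, so
\[
r_{\m(\K_I)}(f(X)) = r_{\m(\K_I)\setminus I}(f(X)) = r_{\m(\K)/I}(f(X)),
\]
where the last equality is Lemma~\ref{lem:LinVarSec}. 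Finally, since $f(X) \subseteq f(E_2) \setminus I$, one more use of the same fact about restrictions gives
\[
r_{\m(\K)/I}(f(X)) = r_{(\m(\K)/I)|(f(E_2)\setminus I)}(f(X)).
\]
Concatenating these equalities shows that the bijection $f\colon E_2 \setminus f^{-1}(I) \to f(E_2)\setminus I$ preserves the rank function, which is the desired isomorphism.

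The main potential difficulty is purely bookkeeping: keeping track of the distinct ground sets $E_1$, $E_2$, $E_1 \setminus I$, $f(E_2)\setminus I$, $E_2 \setminus f^{-1}(I)$, and verifying that the bijection induced by $f$ pairs them up correctly at each step. Once the notation is unpacked, the result is essentially a formal consequence of the two preceding lemmas, so I do not expect any genuinely new argument beyond the verification that $f(X)\subseteq f(E_2)\setminus I$ at the key moment when Lemma~\ref{lem:LinVarSec} is invoked.
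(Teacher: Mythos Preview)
Your proposal is correct and follows essentially the same approach as the paper: combine Lemma~\ref{lem:LinVarSec} (slice $\leadsto$ contraction) with Lemma~\ref{lem:ALSLifting} ($f$-projection $\leadsto$ restriction via $f$). The only cosmetic difference is that you verify the isomorphism by comparing rank functions using part~\ref{lem:ALSLifting:part4} of Lemma~\ref{lem:ALSLifting}, whereas the paper works directly with the matroid-level isomorphism from part~\ref{lem:ALSLifting:part2} and then restricts it; these are interchangeable formulations of the same argument.
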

\begin{proof}
Lemma~\ref{lem:LinVarSec} applied to $E_1, \K$ and $I$ implies that 
\[
\m(\K_{I})\setminus I = \m(\K)/I.
\]
Hence,
\[
(\m(\K_{I})\setminus I) | (f(E_2)\setminus I) = (\m(\K)/I) | (f(E_2)\setminus I).
\]
In addition, since $(E_1 \setminus I) \supseteq (f(E_2) \setminus I)$, 
\[
(\m(\K_{I})\setminus I) | (f(E_2)\setminus I) = \m(\K_{I}) | (f(E_2)\setminus I).
\]
Therefore,
\begin{equation}\label{lem:LinVarSecProj:eq1}
\m(\K_{I}) | (f(E_2)\setminus I) = (\m(\K)/I) | (f(E_2)\setminus I).
\end{equation}

By Lemma~\ref{lem:ALSLifting}\ref{lem:ALSLifting:part2}, $f$ is an isomorphism between
$\m(\pi_f(\K_{I}))$ and $\m(\K_I|f(E_2))$. Hence, $f|_{(E_2\setminus f^{-1}(I))}$ is an isomorphism between $\m(\pi_f(\K_{I}))\setminus f^{-1}(I)$ and $\m\left(\K_I\right)|\left(f(E_2)\setminus I\right).$ This fact together with~\eqref{lem:LinVarSecProj:eq1} implies that $f$ provides an isomorphism between $\m(\pi_f(\K_{I}))\setminus f^{-1}(I)$ and $(\m(\K)/I) | (f(E_2)\setminus I).$
\end{proof}

The following lemma provides a simple correspondence between a matroid of linear variety $\K$ and a vector matroid of matrix in equational representation of $\K$ which will be used in sequel and has an independent importance.

\begin{lemma}\label{lem:IdepMatroidCorr}Let $\K = \AS(A,\mathbf b) \subseteq \F^E$ be a linear variety for some $A \in \M_{m\, E}(\F), \mathbf b \in \F^n$. Then
\begin{enumerate}[label=(\alph*), ref=(\alph*)]
\item\label{lem:IdepMatroidCorr:part1} 
\begin{equation}\label{lem:IdepMatroidCorr:eq}
r_{\m[A]}(X) + r(\m(\K)) = |X| + r_{\m(\K)}\left(E\setminus X\right)\;\;\mbox{for all}\;\;X \subseteq E.
\end{equation}
\item\label{lem:IdepMatroidCorr:part2}  $\m[A] = \m^*(\K)$.
\end{enumerate}
\end{lemma}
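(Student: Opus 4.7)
The plan is to deduce both statements from a single rank-nullity computation applied to a coordinate projection of the vector space belonging to $\K$. Let $V = \AS(A,\mathbf 0)$; by Lemma~\ref{lem:NonHIsSumH} this is the vector space belonging to $\K$, so $\m(\K) = \m(V)$ and $r(\m(\K)) = \dim V$.

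First I would record the identity $r_{\m(V)}(Y) = \dim\bigl(\pi_E^Y(V)\bigr)$ for every $Y \subseteq E$. This is essentially what is shown inside the proof of Lemma~\ref{lem:IndepEveryValVectors}: the assignment $\x_i^Y \mapsto \x_i^E|_V$ is the dual of the surjection of $V$ onto $\pi_E^Y(V)$, so its image $\Span(\x_i^E|_V : i \in Y)$ has the same dimension as $\pi_E^Y(V)$ by the usual rank-nullity for dual maps.

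For part~\ref{lem:IdepMatroidCorr:part1}, apply rank-nullity to $\pi_E^{E\setminus X}|_V \colon V \to \F^{E\setminus X}$:
\[
\dim V \;=\; \dim\Ker\bigl(\pi_E^{E\setminus X}|_V\bigr) + \dim \pi_E^{E\setminus X}(V).
\]
The kernel consists of those $\mathbf v \in V$ whose support lies in $X$. Extending by zero, $\F^X \hookrightarrow \F^E$, identifies this kernel with $\{\mathbf w \in \F^X : A(|X]\,\mathbf w = \mathbf 0\}$, the null space of the column-submatrix $A(|X]$, so it has dimension $|X| - \rk(A(|X]) = |X| - r_{\m[A]}(X)$ by the definition of the vector matroid. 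Substituting, together with $\dim \pi_E^{E\setminus X}(V) = r_{\m(\K)}(E\setminus X)$ from the first step and $\dim V = r(\m(\K))$, yields the claimed identity~\eqref{lem:IdepMatroidCorr:eq}.

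For part~\ref{lem:IdepMatroidCorr:part2}, compare the formula of part~\ref{lem:IdepMatroidCorr:part1} with Lemma~\ref{lem:DualMatroidRank} applied to $\m(\K)$:
\[
r^*_{\m(\K)}(X) + r(\m(\K)) \;=\; r_{\m(\K)}(E\setminus X) + |X|.
\]
Subtracting these two equalities gives $r_{\m[A]}(X) = r^*_{\m(\K)}(X)$ for every $X \subseteq E$, and since a matroid on $E$ is determined by its rank function, we conclude $\m[A] = \m^*(\K)$. The only nontrivial bookkeeping is the clean identification of $\Ker(\pi_E^{E\setminus X}|_V)$ with the null space of $A(|X]$; once this is secured, the rest is a direct rearrangement against the dual-rank formula.
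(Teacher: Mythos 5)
Your proof is correct and is essentially the paper's own argument, lightly repackaged: the paper encodes the same dimension count in the exact sequence $0 \to V_{E\setminus X} \to \F^X \to \colsp\left(A(|X]\right) \to 0$ together with Corollary~\ref{cor:DimEmbRank}, whereas you run rank--nullity on $\pi_E^{E\setminus X}|_V$ after identifying its kernel with the nullspace of $A(|X]$ and its image dimension with $r_{\m(\K)}(E\setminus X)$ via the duality computation already present in Lemma~\ref{lem:IndepEveryValVectors}. Part~\ref{lem:IdepMatroidCorr:part2} is obtained exactly as in the paper, by comparison with Lemma~\ref{lem:DualMatroidRank}.
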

\begin{proof}
\textbf{\ref{lem:IdepMatroidCorr:part1}}\quad Let $V \subseteq \F^E$ be a vector space belonging to $\K$. It follows from Corollary~\ref{cor:SolSetLinVty} that 
\begin{equation}\label{lem:IdepMatroidCorr:eq2}
V = \AS(A,\mathbf 0).
\end{equation}

Let $X \subseteq E$. Consider the following sequence of vector spaces and linear maps
\begin{equation}\label{lem:IdepMatroidCorr:eq1}
0 \to V_{E\setminus X} \overset{\pi_{E}^X}{\relbar\joinrel\relbar\joinrel\relbar\joinrel\relbar\joinrel\longrightarrow} \F^X
 \overset{\mathbf x \mapsto \sum_{i \in X} \mathbf x_i A(|i]}{\relbar\joinrel\relbar\joinrel\relbar\joinrel\relbar\joinrel\relbar\joinrel\relbar\joinrel\relbar\joinrel\relbar\joinrel\longrightarrow} \colsp \left(A(|X]\right) \to 0,
\end{equation}
where $\colsp \left(A(|X]\right)$ is a column space of a matrix $A(|X]$. The definition of $V_{E\setminus X},$ $\pi_{E}^X$ and the equality~\eqref{lem:IdepMatroidCorr:eq2} imply that this sequence is exact. Hence,
\begin{equation}\label{lem:IdepMatroidCorr:eq3}
\dim(\F^X) = \dim\left(V_{E\setminus X}\right) + \dim\left(\colsp \left(A(|X]\right)\right).
\end{equation}

By the definition of the rank function,
\begin{equation}\label{lem:IdepMatroidCorr:eq5}
r\left(\m(V)\right) = r_{\m(\K)}(E)
\end{equation}
Hence, using Corollary~\ref{cor:DimEmbRank} we conclude that
\begin{equation}\label{lem:IdepMatroidCorr:eq4}
\dim\left(V_{E\setminus X}\right) = r\left(\m(V)\right) - r_{\m(V)}(E\setminus X) \overset{\eqref{lem:IdepMatroidCorr:eq5}}{=\joinrel=\joinrel=}  r_{\m(\K)}(E) - r_{\m(V)}(E\setminus X).
\end{equation}

The definition of $\m[A]$ implies that 
\begin{equation}\label{lem:IdepMatroidCorr:eq6}
\dim\left(\colsp \left(A(|X]\right)\right) = r_{\m[A]}(X).
\end{equation}

Thus, by substituting~\eqref{lem:IdepMatroidCorr:eq4} and~\eqref{lem:IdepMatroidCorr:eq6} into  \eqref{lem:IdepMatroidCorr:eq3} we obtain that
\begin{equation}\label{lem:IdepMatroidCorr:eq7}
\dim(\F^X) =  r\left(\m(V)\right) - r_{\m(\K)}(E\setminus X) + r_{\m[A]}(X).
\end{equation}
Since $\dim(\F^X) = |X|$, the equality~\eqref{lem:IdepMatroidCorr:eq7} is equivalent to the equality~\eqref{lem:IdepMatroidCorr:eq}.

\paragraph{\ref{lem:IdepMatroidCorr:part2}} This follows directly from the part~\ref{lem:IdepMatroidCorr:part1} of the lemma and Lemma~\ref{lem:DualMatroidRank}.
\end{proof}

\begin{corollary}\label{cor:MatMatDualToIndepMat}Let $\K = \AS(A,\mathbf b) \subseteq \F^E$ be a linear variety. Then $\m(\K) = \m^*[A]$.
\end{corollary}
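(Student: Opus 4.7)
The plan is to derive this corollary directly from Lemma~\ref{lem:IdepMatroidCorr}\ref{lem:IdepMatroidCorr:part2}, which already supplies the dual statement $\m[A] = \m^*(\K)$. Since matroid duality is an involution (that is, $(M^*)^* = M$ for every matroid $M$, as is immediate from the fact that the bases of $M^{**}$ are the complements of the cobases of $M$, hence coincide with the bases of $M$), applying the dual operation to both sides of $\m[A] = \m^*(\K)$ yields $\m^*[A] = \m^{**}(\K) = \m(\K)$, which is precisely the desired identity.

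Concretely, first I would cite Lemma~\ref{lem:IdepMatroidCorr}\ref{lem:IdepMatroidCorr:part2} to get $\m[A] = \m^*(\K)$. Then I would invoke the involutivity of duality (established in the excerpt by stating the dual matroid via complementing bases) to conclude $\m(\K) = (\m^*(\K))^* = \m^*[A]$. There is no real obstacle here, as the result is a two-line consequence of a previously proved identity and a basic matroid-theoretic fact; the only thing to be careful about is invoking the involutive property of duality explicitly so the chain of equalities is unambiguous.
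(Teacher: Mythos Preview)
Your proposal is correct and matches the paper's intent: the corollary is stated without proof immediately after Lemma~\ref{lem:IdepMatroidCorr}, precisely because it follows from part~\ref{lem:IdepMatroidCorr:part2} by the involutivity of matroid duality, exactly as you describe.
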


\begin{corollary}\label{cor:EqEqRepImplyEqMat}If $\varnothing \neq \AS(A,\mathbf b) = \AS(A',\mathbf b')$, then $\m[A] = \m[A']$.
\end{corollary}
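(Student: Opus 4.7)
The plan is to apply Corollary~\ref{cor:MatMatDualToIndepMat} to each of the two equational representations and then take duals. Concretely, let $\K = \AS(A,\mathbf b) = \AS(A',\mathbf b')$; since $\K \neq \varnothing$, both pairs $(A,\mathbf b)$ and $(A',\mathbf b')$ are genuine equational representations of the \emph{same} linear variety $\K \subseteq \F^E$. The matroid $\m(\K)$ is defined intrinsically from $\K$ (via Definition~\ref{def:MatroidMK}), so it does not depend on which equational representation we use to describe $\K$.

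Applying Corollary~\ref{cor:MatMatDualToIndepMat} to the representation $(A,\mathbf b)$ gives $\m(\K) = \m^{*}[A]$, and applying it to $(A',\mathbf b')$ gives $\m(\K) = \m^{*}[A']$. Hence $\m^{*}[A] = \m^{*}[A']$. Taking matroid duals on both sides, and using the standard fact that $(M^{*})^{*} = M$ for any matroid $M$ (which follows at once from the definition of the dual in terms of bases on p.~65 of~\cite{Oxley2011}), we conclude $\m[A] = \m[A']$.

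There is essentially no obstacle here: the entire content is already packed into Corollary~\ref{cor:MatMatDualToIndepMat}, which itself is a direct consequence of Lemma~\ref{lem:IdepMatroidCorr}\ref{lem:IdepMatroidCorr:part2}. The only thing one might wish to spell out is the involutivity of the dual, but this is immediate and standard. If one preferred to avoid invoking involutivity, one could equivalently argue directly from the rank formula~\eqref{lem:IdepMatroidCorr:eq}: for every $X \subseteq E$ one has
\[
r_{\m[A]}(X) = |X| + r_{\m(\K)}(E\setminus X) - r(\m(\K)) = r_{\m[A']}(X),
\]
where the middle expression depends only on $\K$, not on the chosen representation, so the two vector matroids have identical rank functions and are therefore equal.
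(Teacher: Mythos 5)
Your proof is correct and is exactly the argument the paper intends (the corollary is stated without proof, as an immediate consequence of Lemma~\ref{lem:IdepMatroidCorr}\ref{lem:IdepMatroidCorr:part2} and Corollary~\ref{cor:MatMatDualToIndepMat}): since $\m(\K)$ depends only on the linear variety $\K$ and not on its equational representation, both $\m[A]$ and $\m[A']$ equal $\m^{*}(\K)$. Your alternative via the rank formula~\eqref{lem:IdepMatroidCorr:eq} is a fine way to sidestep involutivity of the dual, but either route is immediate.
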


The following definition and two lemmas are used in the proof of Lemma~\ref{lem:ALSDetNKZeroDimKThenExistVec} in the next section.

\begin{definition}Let $\K = \AS(A,\mathbf b)$ be a linear variety and $B^* \subseteq E$ be a cobasis of $\m[A]$. Then $A$ is called \emph{reduced with respect to $B^*$} if $A$ has full rank and $A(|B^*]$ is a permutation matrix.
\end{definition}

\begin{lemma}\label{lem:ReducedERExists}Let $\K \subsetneq \F^E$ be a linear variety and $B^* \subseteq E$ be a cobasis of $\m(\K)$. Then there exists an equational representation $(A,\mathbf b)$ of $\K$ such that $A$ is reduced with respect to $B^*$.
\end{lemma}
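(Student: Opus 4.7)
The approach is to begin with an arbitrary equational representation of $\K$, reduce it to one of full rank, and then left-multiply by a single invertible matrix so that the columns indexed by $B^*$ become the identity.

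First, by Lemma~\ref{lem:ForLinVarExistsEqRep} the linear variety $\K$ has some equational representation $(A_0, \mathbf b_0)$; applying Lemma~\ref{lem:ChooseFullRank} we may assume $A_0$ has full rank, so its number of rows equals $r := \rk(A_0)$. By Corollary~\ref{cor:matLinVarDim} and Corollary~\ref{cor:MatMatDualToIndepMat}, $r = \codim(\K) = r(\m^*(\K)) = r(\m[A_0])$, so $|B^*| = r$, since $B^*$ is a cobasis of $\m(\K)$ and hence a basis of $\m[A_0]$.

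Second, since $B^*$ is a basis of $\m[A_0]$, the $r$ columns of $A_0$ labelled by $B^*$ are linearly independent in $\F^r$, so the $r \times r$ submatrix $A_0(|B^*]$ is invertible. Let $C = \bigl(A_0(|B^*]\bigr)^{-1}$ and set $A = CA_0$, $\mathbf b = C\mathbf b_0$. By the standard fact that left multiplication by an invertible matrix yields an equivalent linear system, $\AS(A, \mathbf b) = \AS(A_0, \mathbf b_0) = \K$, so $(A, \mathbf b)$ is again an equational representation of $\K$.

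Third, $A$ still has full rank because $C$ is invertible, and $A(|B^*] = C \cdot A_0(|B^*] = I_r$, which is a permutation matrix. Therefore $(A, \mathbf b)$ is an equational representation of $\K$ that is reduced with respect to $B^*$. The argument is essentially bookkeeping; the only mildly delicate point is ensuring that $A_0(|B^*]$ is square, which is secured by matching $|B^*|$ to $\rk(A_0)$ via the duality $\m(\K) = \m^*[A_0]$ after the initial full-rank reduction.
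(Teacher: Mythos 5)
Your proposal is correct and follows essentially the same route as the paper: pass to a full-rank representation via Lemma~\ref{lem:ChooseFullRank}, observe that $B^*$ is a basis of the vector matroid of the coefficient matrix so that the column submatrix indexed by $B^*$ is an invertible square matrix, and left-multiply by its inverse to make that block the identity. Your justification that the submatrix is square (via $|B^*| = \codim(\K) = \rk(A_0)$ and the duality $\m(\K) = \m^*[A_0]$) is in fact slightly more explicit than the paper's.
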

\begin{proof}
Lemma~\ref{lem:ForLinVarExistsEqRep} implies that there is $A'' \in \M_{m\, E}(\F)$ and $\mathbf b'' \in \F^m$ such that  $\K = \AS(A'',\mathbf b'').$ By Lemma~\ref{lem:ChooseFullRank}, there exists $R \subseteq [m]$ such that $\K = \AS\Bigl(A''[R|),\mathbf b''[R|)\Bigr)$ and $A' = A''[R|)$ has full rank. For convenience, let $\mathbf b'$ denote the vector $\mathbf b''[R|)$. Thus,
\begin{equation}\label{lem:ReducedERExists:eq3}
\K = \AS(A',\mathbf b').
\end{equation}

Since $B^*$ is a cobasis of $\m(\K)$, it is a basis of $\m[A']$ by Lemma~\ref{lem:IdepMatroidCorr}\ref{lem:IdepMatroidCorr:part2}. 
Hence, \[
\rk\Bigl(A'(|B^*]\Bigr) = \left|B^*\right| = \rk(A').
\]
Therefore, $A'(|B^*]$ is a square matrix having full rank, which implies that it is invertible. Let us denote its inverse by $C$. Then
\begin{equation}\label{lem:ReducedERExists:eq4}
(CA')(|B^*] = C(A'(|B^*]) = I_{\codim(\K)}.
\end{equation}

By Lemma~\ref{lem:InvMultEquiv}
\begin{equation}\label{lem:ReducedERExists:eq5}
\AS(A,\mathbf b) = \AS(A',\mathbf b'),
\end{equation}
where  $A = CA'$ and $\mathbf b = C\mathbf b'$. By aligning~\eqref{lem:ReducedERExists:eq3} and~\eqref{lem:ReducedERExists:eq5} together we conclude that $\K = \AS(A,\mathbf b)$. In addition, \eqref{lem:ReducedERExists:eq4} implies that $A(|B^*]$ is an identity matrix and consequently $A$ is reduced with respect to $B^*$. Thus, the pair $(A, \mathbf b)$ satisfy all the conditions required in the statement of the lemma.
\end{proof}

\begin{lemma}\label{lem:ALSReducedBNonzeroExchange}Let $\K \subseteq \F^E$ be a linear variety, $B^* \subseteq E$ be a cobasis of $\m(\K)$, $(A,\mathbf b)$ be an equational representation of $\K$ such that $A$ is reduced with respect to $B^*$, $1 \le i \le \codim(\K)$, $e_{\mathrm{old}} \in B^*$ be a unique element such that $A_{i\, e_{\mathrm{old}}} \neq 0$. If $A_{i\, e_{\mathrm{new}}} \neq 0$
 for some $e_{\mathrm{new}} \in E$, then $B^* \triangle \{e_{\mathrm{old}},e_{\mathrm {new}}\}$ is a cobasis of $\m(\K)$.
\end{lemma}
\begin{proof}Indeed, in this case $A(|B^* \triangle \{e_{\mathrm{old}},e_{\mathrm {new}}\}]$ has the form
\[
\begin{pmatrix}
A(i|B^* \setminus e_{\mathrm{old}}\}] & *\\
0 & A_{i\, e_{\mathrm{new}}}
\end{pmatrix}
\]
after an appropriate rearrangement of rows and columns. It follows from the definition of $e_{\mathrm{old}}$ that $A(i|B^*\setminus\{e_{\mathrm{old}}\}]$ is a permutation matrix and hence it has full rank. Hence, $A(|B^* \triangle \{e_{\mathrm{old}},e_{\mathrm {new}}\}]$ is a square matrix having full rank. Therefore
$$\rk\Bigl(A(|B^* \triangle \{e_{\mathrm{old}},e_{\mathrm {new}}\}]\Bigr) = \left|B^* \triangle \{e_{\mathrm{old}},e_{\mathrm {new}}\}\right| = \left|B^*\right| = \rk(A)$$
which implies that $B^* \triangle \{e_{\mathrm{old}},e_{\mathrm {new}}\}$ is a basis of $\m[A]$.
\end{proof}

\section{Preliminaries from the theory of the Cullis' determinant}\label{sec:prelimcul}

In order to apply the results obtained above to the theory of the Cullis’ determinant, as presented in Section~\ref{sec:linvarsann}, we introduce several additional notations, definitions, and preliminary facts. The first six definitions are given following~\cite{NAKAGAMI2007422}.

\begin{definition}\label{def:NAKAG1}
By $[n]$ we denote the set $\{1, \ldots, n\}$.
\end{definition}
\begin{definition}
 By $\mathcal C_{X}^{k}$ we denote the set of injections from $[k]$ to $X$.
\end{definition}
\begin{definition}By $\binom{X}{k}$ we denote the set of the images of injections from $[k]$ to $X$, i.e. the set of all subsets of $X$ of cardinality $k.$
\end{definition}

\begin{definition}Suppose that $c \in \binom{[n]}{k}$ equal to $\{i_1,\ldots, i_k\}$, where $i_1 < i_2 < \ldots < i_k$, and $1 \le \alpha \le k$ is a natural number. Then $c(\alpha)$ is defined by
$$c(\alpha) = i_{\alpha}.$$
\end{definition}

\begin{definition}Given a set $c \in \binom{[n]}{k}$ we denote by $\sgn (c) = \sgn_{[n]} (c)$ the number
$$(-1)^{\sum_{\alpha = 1}^{k} (c(\alpha) - \alpha)}.$$
\end{definition}

\begin{note}$\sgn_{[n]}(c)$ depends only on $c$ and does not depend on $n.$
\end{note}

\begin{definition}\label{def:NAKAG2}Given an injection $\sigma \in \mathcal C_{[n]}^{k}$ we denote by $\sgn_{n\,k} (\sigma)$ the product $\sgn(\pi_\sigma) \cdot \sgn_{[n]} (c),$
where $\sgn(\pi)$ is the sign of the permutation
$$
\pi_\sigma =
\begin{pmatrix}
i_1 & \ldots & i_k\\
\sigma(1) & \ldots & \sigma(k)
\end{pmatrix},
$$
where $\{i_1, \ldots, i_k\} = \sigma([k])$ and $i_1<i_2<\ldots <i_k$.
\end{definition}

We omit the subscripts for $\sgn_{[n]}$ and $\sgn_{n\,k}$ if this cannot lead to a misunderstanding.

\begin{definition}[{Cf.~\cite[Corollary~2.6]{Guterman2025}}]\label{def:CullisDet}
Let  $n \ge k\ge 1$ be integers, $X \in \M_{n\,k} (\mathbb F)$ be a matrix. Then Cullis' determinant $\det_{n\, k}(X)$ of $X$ is defined by
\begin{equation*}
\det_{n\,k}(X) = (-1)^{(1 + \ldots + k)}\sum_{1 \le c_1 < \ldots < c_k \le n } (-1)^{c_1 + \ldots + c_k} \begin{vmatrix}x_{c_1\, 1} & \cdots & x_{c_1\, k}\\ \vdots & \ddots & \vdots\\ x_{c_k\, 1} & \cdots & x_{c_k\, k}\end{vmatrix}.
\end{equation*}
That is, the Cullis' determinant of $X$ is an alternating sum of basic minors of $X$. We also denote $\det_{n\, k} (X)$ as follows
\[
\det_{n\,k}(X)=\begin{vmatrix}X_{1\,1} & \cdots & X_{1\,k}\\
\vdots & \cdots & \vdots\\
X_{n\,1} & \cdots & X_{n\,k}
\end{vmatrix}_{n\,k}.
\]
In the case if $n = k$, then $\det_{n\,n}$ is also denoted as $\det_{k}$ and is clearly equal to the ordinary determinant of a square matrix.
\end{definition}

Lemma below provides a definition for the Cullis' determinant using another notation. This definition will be used in the sequel as well as Definition~\eqref{def:CullisDet}.

\begin{lemma}\label{lem:DetNKAsSumProj}\label{cor:CullisAsSumDet}
Let $X\in \M_{n\, k}(\F)$. Then
\begin{equation}\label{cor:CullisAsSumDet:eq}
\det_{n\,k}(X) = \sum_{c \in \binom{[n]}{k}}\sgn_{[n]}(c) \det_{k} \Bigl(X[c|)\Bigr).
\end{equation}
\end{lemma}
\begin{proof}
Indeed, a term-by-term comparison yields
\[
(-1)^{(1 + \ldots + k)} \cdot  (-1)^{c_1 + \ldots + c_k} \begin{vmatrix}x_{c_1\, 1} & \cdots & x_{c_1\, k}\\ \vdots & \ddots & \vdots\\ x_{c_k\, 1} & \cdots & x_{c_k\, k}\end{vmatrix} = \sgn_{[n]}(c) \det_{k} \Bigl(X[c|)\Bigr)
\]
for all $c = \{c_1, \ldots, c_k\} \in c \in \binom{[n]}{k}$, where $1 \le c_1 < \ldots < c_k \le n$.
\end{proof}

Now we list the properties of $\det_{n\, k}$ which are similar to corresponding properties of the ordinary determinant (see \cite[\textsection 5, \textsection 27, \textsection 32]{cullis1913} or~\cite{NAKAGAMI2007422} for detailed proofs).

\begin{theorem}[{\cite[Theorem 13, Theorem 16]{NAKAGAMI2007422}}]\label{thm:DetNKBasProp}
\noindent \begin{enumerate}
\item For $X \in \M_{n}(\mathbb F),$ $\det_{n\,n}(X) = \det (X).$
\item For $X \in \M_{n\,k}(\mathbb F),$ $\det_{n\,k}(X)$ is a linear function of columns of $X$.
\item If a matrix $X \in \M_{n\,k}(\mathbb F)$ has two identical columns or one of its columns is a linear combination of other columns, then $\det_{n\,k}(X)$ is equal to zero.
\item For $X \in \M_{n\,k}(\mathbb F),$ interchanging any two columns of $X$ changes the sign of $\det_{n\,k}(X)$.
\item\label{thm:DetNKBasProp:mark1} Adding a linear combination of columns of $X$ to another column of $X$ does not change $\det_{n\,k}(X)$.
\item For $X \in \M_{n\,k}(\mathbb F),$ $\det_{n\,k}(X)$ can be calculated using the Laplace expansion along a column of $X$ (see Lemma~\ref{lem:DetNKLaplaceExp} for precise formulation).
\end{enumerate}
\end{theorem}

\begin{corollary}\label{cor:CullisBinomialExpansion}Let $n \ge k$, $A, B \in \M_{n\,k} (\F).$
Then
\begin{multline}\label{eq:CullisBinomialExpansion}
\det_{n\,k} (A + \lambda B)\\
= \sum_{d = 0}^{k}\lambda^d \left( \sum_{1 \le i_1 < \ldots < i_d \le k} \det_{n\,k}\Bigl(A(|1]\Big|\ldots \Big| B(|i_1] \Big| \ldots \Big| B(|i_d] \Big| \ldots \Big| A(|k] \Bigr)\right),
\end{multline}
where both sides of the equality are considered as formal polynomials in $\lambda$, i.e. as elements of $\F[\lambda]$.
\end{corollary}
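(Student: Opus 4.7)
The plan is to prove this as a direct consequence of column-multilinearity of $\det_{n\,k}$, stated in Theorem~\ref{thm:DetNKBasProp}(2). The identity is essentially the ``multilinear expansion of a sum'' and the coefficient grouping follows the same combinatorial pattern as the binomial theorem, which is why the statement is called a binomial expansion.

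First I would write $A + \lambda B$ column by column: its $j$-th column is $A(|j] + \lambda\, B(|j]$. Applying Theorem~\ref{thm:DetNKBasProp}(2) in the $1$st column splits $\det_{n\,k}(A + \lambda B)$ into two terms, one where the first column is $A(|1]$ and one where it is $\lambda\, B(|1]$; by linearity the scalar $\lambda$ factors out. Iterating this in each of the $k$ columns expresses $\det_{n\,k}(A+\lambda B)$ as a sum of $2^k$ terms, each indexed by a subset $S \subseteq [k]$ describing the columns in which the $B$-column was chosen. Explicitly,
\[
\det_{n\,k}(A+\lambda B) \;=\; \sum_{S \subseteq [k]} \lambda^{|S|}\, \det_{n\,k}\bigl(C^{S}\bigr),
\]
where $C^S$ is the matrix whose $j$-th column is $B(|j]$ if $j \in S$ and $A(|j]$ otherwise.

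Next I would regroup this sum by the cardinality $d = |S|$, running $d$ from $0$ to $k$ and parametrizing the subsets of size $d$ by the increasing tuples $1 \le i_1 < \cdots < i_d \le k$. This recovers exactly the right-hand side of~\eqref{eq:CullisBinomialExpansion}. Finally, I would note that the identity is understood as an equality of polynomials in $\lambda$: each coefficient of $\lambda^d$ is defined over $\F$ by the formula above, and the derivation works uniformly whether $\lambda$ is interpreted as an element of $\F$ or as an indeterminate, so the equality holds in $\F[\lambda]$.

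There is no real obstacle here: the only care needed is notational, namely verifying that ``replace the columns indexed by $S$ in $A$ with the corresponding columns of $B$'' agrees with the mixed-block notation $A(|1]|\ldots|B(|i_1]|\ldots|B(|i_d]|\ldots|A(|k])$ used in the statement. Since no property of $\det_{n\,k}$ beyond column multilinearity is invoked, the argument is a one-step application of Theorem~\ref{thm:DetNKBasProp}(2).
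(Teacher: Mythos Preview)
Your proposal is correct and matches the paper's own proof, which simply states that the identity is a direct consequence of the multilinearity of $\det_{n\,k}$ with respect to the columns of a matrix. Your write-up just makes the combinatorial regrouping explicit.
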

\begin{proof}This is a direct consequence from the multilinearity of $\det_{n\,k}$ with respect to the columns of a matrix.
\end{proof}
\begin{corollary}\label{cor:DegDetABLEQK}If $A, B \in \M_{n\,k}(\F)$, then $\deg_\lambda\left(\det_{n\,k} (A + \lambda B)\right) \le k$.
\end{corollary}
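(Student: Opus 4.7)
The plan is to read the bound directly off the expansion provided by Corollary~\ref{cor:CullisBinomialExpansion}. That corollary exhibits $\det_{n\,k}(A + \lambda B)$ as an element of $\F[\lambda]$ in which the outer summation index $d$ ranges only over $0, 1, \ldots, k$, and the coefficient of $\lambda^d$ is a sum of Cullis' determinants of matrices whose entries do not depend on $\lambda$. Consequently no monomial of degree strictly greater than $k$ can appear in $\det_{n\,k}(A + \lambda B)$, so $\deg_\lambda\left(\det_{n\,k}(A + \lambda B)\right) \le k$.

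Alternatively, one can avoid invoking the binomial expansion and argue straight from Definition~\ref{def:CullisDet}: each summand of $\det_{n\,k}(A + \lambda B)$ is a signed product of exactly $k$ entries of $A + \lambda B$, and every such entry is an affine function of $\lambda$. Hence every summand is a polynomial in $\lambda$ of degree at most $k$, and so is their total sum.

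There is no genuine obstacle to overcome here; the claim is essentially a syntactic observation about the shape of the polynomial written down in Corollary~\ref{cor:CullisBinomialExpansion}, and the proof should take only a single sentence.
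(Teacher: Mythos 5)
Your proof is correct and matches the paper's intent exactly: the corollary is stated without proof immediately after Corollary~\ref{cor:CullisBinomialExpansion}, precisely because the degree bound is read off from the fact that the outer summation index $d$ in~\eqref{eq:CullisBinomialExpansion} only runs up to $k$ and the coefficients are independent of $\lambda$. Your alternative argument from Definition~\ref{def:CullisDet} is also valid, but no comparison is needed since both are one-line observations.
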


%
%

%

\begin{lemma}[Cf.~{\cite[Lemma~20]{NAKAGAMI2007422}}]~\label{lem:NAKAGAMILEMMA20} Assume that $n > k \ge 1$. Let  $X \in \M_{n\,k}(\F)$ and $Y \in \M_{n\,(k+1)}$ is defined by $Y = X | \begin{psmallmatrix}1 \\ \vdots \\ 1\end{psmallmatrix}$. Then
\[
\det_{n\,(k+1)}(Y) = \begin{cases}
\det_{n\,k}(X), & \mbox{$n + k$ is odd},\\
0, & \mbox{$n + k$ is even}.
\end{cases}
\]
\end{lemma}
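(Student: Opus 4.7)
The natural plan is to expand $\det_{n\,(k+1)}(Y)$ using Corollary~\ref{cor:CullisAsSumDet}, then use a Laplace expansion along the all-ones column, and finally collect terms and check a combinatorial sign identity.

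Concretely, by Corollary~\ref{cor:CullisAsSumDet},
\[
\det_{n\,(k+1)}(Y) = \sum_{c \in \binom{[n]}{k+1}} \sgn_{[n]}(c)\, \det_{k+1}\bigl(Y[c|)\bigr),
\]
and since the last column of $Y[c|)$ is all ones, the classical Laplace expansion along that column gives
\[
\det_{k+1}\bigl(Y[c|)\bigr) = \sum_{\alpha=1}^{k+1} (-1)^{\alpha+(k+1)} \det_{k}\!\bigl(X[c\setminus\{c(\alpha)\}|)\bigr).
\]
Substituting and reindexing the double sum by the pair $(d,j)$, where $d = c\setminus\{c(\alpha)\} \in \binom{[n]}{k}$ and $j=c(\alpha) \in [n]\setminus d$, yields an expression of the form $\sum_{d} \det_k(X[d|)) \cdot \Sigma_d$, where $\Sigma_d$ is an inner sign sum over $j \in [n]\setminus d$.

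The key step is to simplify $\Sigma_d$. A direct computation from the definition of $\sgn_{[n]}$ shows that for $c = d \cup \{j\}$ one has $\sgn_{[n]}(c) = \sgn_{[n]}(d)\cdot (-1)^{j-k-1}$, and if $j$ is the $\alpha$-th smallest element of $c$ then $\alpha = \#\{d' \in d : d' < j\} + 1$. Combining these, the contribution of the pair $(d,j)$ to the sum carries the sign $\sgn_{[n]}(d)\cdot(-1)^{j+\alpha}$. The main combinatorial observation is the identity
\[
\alpha = j - m + 1,
\]
where $m$ is the position of $j$ in the ordered list $[n]\setminus d$; this follows because every integer in $[n]$ up to and including $j$ lies in either $d$ or $[n]\setminus d$. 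Therefore $(-1)^{j+\alpha} = (-1)^{m+1}$, and so
\[
\Sigma_d = \sgn_{[n]}(d)\sum_{m=1}^{n-k}(-1)^{m+1},
\]
which is independent of $d$ and equals $\sgn_{[n]}(d)$ if $n-k$ is odd and $0$ if $n-k$ is even.

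Plugging this back into the expanded expression and using Corollary~\ref{cor:CullisAsSumDet} once more to recognise $\sum_d \sgn_{[n]}(d)\det_k(X[d|)) = \det_{n\,k}(X)$, one obtains the claim, since $n-k$ and $n+k$ have the same parity. The only genuinely subtle point is the identity $\alpha = j-m+1$ relating the position of $j$ in $c = d\cup\{j\}$ to its position in the complement of $d$; everything else is a careful bookkeeping of signs in the Laplace expansion and in the definition of $\sgn_{[n]}$.
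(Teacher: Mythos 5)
The paper does not prove this lemma at all: it is imported as a preliminary fact, cited from Nakagami--Yanai \cite[Lemma~20]{NAKAGAMI2007422}, so there is no in-paper argument to compare yours against. Your proof is correct and self-contained. The two computations that carry all the weight both check out: first, from the definition $\sgn_{[n]}(c)=(-1)^{\sum_\beta(c(\beta)-\beta)}$ one indeed gets $\sgn_{[n]}(d\cup\{j\})=\sgn_{[n]}(d)\,(-1)^{j-k-1}$ (the position $\alpha$ of $j$ cancels in the exponent), which combined with the Laplace sign $(-1)^{\alpha+k+1}$ gives the stated $(-1)^{j+\alpha}$; second, counting $\{i\in[n]:i\le j\}=j$ split between $d$ and $[n]\setminus d$ gives $j=(\alpha-1)+m$, hence $(-1)^{j+\alpha}=(-1)^{m+1}$, so the inner sum telescopes to $\sum_{m=1}^{n-k}(-1)^{m+1}$, which is $1$ or $0$ according to the parity of $n-k$ (equivalently of $n+k$). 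The reindexing $(c,\alpha)\leftrightarrow(d,j)$ is a genuine bijection, and the hypothesis $n>k$ is exactly what makes $\det_{n\,(k+1)}$ and Corollary~\ref{cor:CullisAsSumDet} applicable to $Y$. This is a clean, purely combinatorial derivation from the alternating-sum-of-maximal-minors formula, arguably more elementary than the Grassmann-algebra framework of the cited source.
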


\begin{lemma}[{Cf.~\cite[Theorem~16]{NAKAGAMI2007422}}]\label{lem:DetNKLaplaceExp}
Let $1 < k \le n$. For any $n \times k$ matrix $X = (x_{i\, j})$ the expansion of $\det_{n\, k}(X)$ along the $j$-th column
is given by
\[
\det_{n\, k}(X) = \sum_{i = 1}^n (-1)^{i+j} x_{i\, j} \det_{(n-1)\,(k-1)}\Bigl(X(i|j)\Bigr),
\]
where $X(i|j)$ denotes the $(n-1) \times (k-1)$ matrix obtained from $Y$ by deleting the $i$-th row and the $j$-th column.
\end{lemma}

\begin{lemma}[{Invariance of $\det_{n\,k}$ under cyclic shifts, Cf.~\cite[Theorem 3.5]{amiri2010}}]\label{lem:PermKNOddCyclic}If $n \ge k$, and $k + n$ is odd, then for all $X = (x_{i\,j}) \in \M_{n\,k}(\F)$ and $i \in \{1, \ldots, n\}$ 
\[
(-1)^{(i+1)k}
\begin{vmatrix}
x_{i\,1} & \ldots & x_{i\,k}\\
\vdots & \ddots & \vdots\\
x_{n\,1} & \ldots & x_{n\,k}\\
x_{1\,1} & \ldots & x_{1\,k}\\
  \vdots & \ddots & \vdots\\
x_{i-1\,1} & \ldots & x_{i-1\,k}\\
\end{vmatrix}_{n\,k} =
\begin{vmatrix}
x_{1\,1} & \ldots & x_{1\,k}\\
  \vdots & \ddots & \vdots\\
x_{n\,1} & \ldots & x_{n\,k}\\
\end{vmatrix}_{n\,k}.
\]
Here the matrix on the left-hand side of the equality is obtained from $X$ by performing the row cyclical shift sending $i$-th row of $X$ to the first row of the result.
\end{lemma}

\begin{lemma}[Invariance of $\det_{n\,k}$ under semi-cyclic shifts, Cf.~{\cite[Theorem 3.6]{amiri2010}}]\label{lem:PermKNEvenSemicyclic}If $n\ge k$, and $k + n$ is even, then for all $X = (x_{i\,j}) \in \M_{n\,k}(\F)$ and $i \in \{1, \ldots, n\}$
\[
(-1)^{(n-i)k}
\begin{vmatrix}
x_{i\,1} & \cdots & x_{i\,k}\\
\vdots & \ddots & \vdots\\
x_{n\,1} & \cdots & x_{n\,k}\\
-x_{1\,1} & \cdots & -x_{1\,k}\\
  \vdots & \ddots & \vdots\\
-x_{(i-1)\,1} & \cdots & -x_{(i-1)\,k}\\
\end{vmatrix}_{n\,k} =
\begin{vmatrix}
x_{1\,1} & \cdots & x_{1\,k}\\
  \vdots & \ddots & \vdots\\
x_{n\,1} & \cdots & x_{n\,k}\\
\end{vmatrix}_{n\,k}.
\]
Here the matrix on the left-hand side of the equality is obtained from $X$ by performing the following sequence of operations: the row cyclical shift sending $i$-th row of $X$ to the first row of the result; multiplying the bottom $i-1$ rows by $-1$.
\end{lemma}

In the following lemma we introduce an invertible linear map satisfying certain properties which will be used in the proof of Lemma~\ref{thm:MaxDimKEvenAltSumZero}.

\begin{lemma}\label{lem:SCS}Let $k$ be an odd integer, $n \ge k \ge 1$ and $1 \le i^{\circ} \le n$. Then there exists an invertible linear map $\SCS_{i^{\circ}} \colon \M_{n\, k}(\F) \to \M_{n\,k}(\F)$ having the following properties:
\begin{enumerate}[label=(S\arabic*), ref=(S\arabic*)]
\item\label{SCSprop2} if $X \in \M_{n\,k}(\F)$ and $\mathbf z \in \F^n$ are such that $\mathbf z^t X = 0$, then $\mathbf {z^{\circ}}^t\SCS_{i^{\circ}}(X) = 0$, where 
    \begin{equation}\label{lem:SCS:eq}\mathbf z^{\circ} = (\mathbf z_{i^{\circ}}, \ldots, \mathbf z_n, (-1)^{n+k+1}\mathbf z_1 \ldots, \mathbf (-1)^{n+k+1}\mathbf z_{i^{\circ}-1})^t;
    \end{equation}
\item\label{SCSprop3} $\det_{n\,k}(X) = 0 \Rightarrow \det_{n\,k}(\SCS_{i^\circ}(X)) = 0$;
\item\label{SCSprop4} If $k$ is odd, then $$\sum_{i=1}^n (-1)^{i}\left(\SCS_{i^\circ}(X)\right)[i|) = \begin{pmatrix}0 & \cdots & 0\end{pmatrix}$$ implies that $$\sum_{i=1}^n (-1)^{i}X[i|) = \begin{pmatrix}0 & \cdots & 0\end{pmatrix}$$ for all $X \in \M_{n\,k}(\F)$.
\end{enumerate} 
\end{lemma}
\begin{proof}
The proof of this lemma relies on Lemma~\ref{lem:PermKNOddCyclic} and Lemma~\ref{lem:PermKNEvenSemicyclic}, which are applicable for the case of $n + k$ odd and $n + k$ is even, respectively. For this reason, each of these two cases will be considered separately.
\paragraph{The case if $n + k$ is odd.} Let  $\SCS_{i^\circ} \colon \M_{n\, k}(\F) \to \M_{n\,k}(\F)$ be defined by
\begin{equation}\label{lem:SCS:eq1}
\SCS_{i^\circ}(X) = \begin{pmatrix}
x_{i^{\circ}\,1} & \cdots & x_{i^{\circ}\,k}\\
\vdots & \ddots & \vdots\\
x_{n\,1} & \ldots & x_{n\,k}\\
\vdots & \ddots & \vdots\\
x_{(i^{\circ}-1)\,1} & \ldots & x_{(i^{\circ}-1)\,k}\\
\end{pmatrix}\;\;\mbox{for all}\;\; X \in \M_{n\,k}(\F).
\end{equation}

This map is clearly linear and invertible. Now we verify that $\SCS_{i^\circ}$ indeed satisfies the properties~\ref{SCSprop2}--\ref{SCSprop4}.

\paragraph{Property~\ref{SCSprop2}.} Let $X \in \M_{n\,k}(\F)$ and $\mathbf z \in \F^n$ be such that $\mathbf z^t X = 0$. It follows from~\eqref{lem:SCS:eq1} that $\mathbf{z^{\mathrm{odd}}}\SCS_{i^{\circ}}(X) = 0$, where $\mathbf{z^{\mathrm{odd}}}$ is defined by
$\mathbf{z^{\mathrm{odd}}} = (\mathbf z_{i^{\circ}}, \ldots, \mathbf z_n, \mathbf z_1, \ldots, \mathbf z_{i^{\circ}-1})^t.$ Since $n + k$ is odd, 
\begin{equation*}
\mathbf{z^{\mathrm{odd}}} = (\mathbf z_{i^{\circ}}, \ldots, \mathbf z_n, \mathbf z_1, \ldots, \mathbf z_{i^{\circ}-1})^t\\
= (\mathbf z_{i^{\circ}}, \ldots, \mathbf z_n, (-1)^{n + k + 1}\mathbf z_1, \ldots, (-1)^{n + k + 1}\mathbf z_{i^{\circ}-1})^t = \mathbf z^{\circ},
\end{equation*}
where $\mathbf z^{\circ}$ be as defined in~\eqref{lem:SCS:eq}.

\paragraph{Property~\ref{SCSprop3}.} Let $X \in \M_{n\,k}(\F)$ be such that $\det_{n\,k}(X)$ = 0. Then by Lemma~\ref{lem:PermKNOddCyclic} we have
\[
\det_{n\,k}(\SCS_{i^\circ}(X)) = (-1)^{(i^{\circ}+1)k}\det_{n\,k}(X) = 0.
\]

\paragraph{Property~\ref{SCSprop4}.} Assume that $k$ is odd. Let $X \in \M_{n\,k}$ be such that 
\begin{equation}\label{lem:SCS:eq21}
\sum_{i=1}^n (-1)^{i}X^{\circ}[i|) = \begin{pmatrix}0 & \cdots & 0\end{pmatrix},
\end{equation}
 where $X^{\circ} = \SCS_{i^{\circ}}(X)$. Rewrite the sum $\sum_{i=1}^{n} (-1)^{i} X[i|)$ as follows
\begin{equation*}
\sum_{i=1}^{n} (-1)^{i} X[i|) = \sum_{i=1}^{i^{\circ}-1} (-1)^{i} X[i|) + \sum_{i=i^{\circ}}^n (-1)^{i} X[i|).
\end{equation*}
Since $n$ is even in this case,
\[
\sum_{i=1}^{i^{\circ}-1} (-1)^{i} X[i|) = \sum_{i=n-i^{\circ}+2}^{n} (-1)^{i - n + i^{\circ} - 1} X^{\circ}[i|)\quad\mbox{and}\quad\sum_{i=i^{\circ}}^n (-1)^{i} X[i|) = \sum_{i=1}^{n-i^{\circ}+1} (-1)^{i + i^{\circ} - 1} X^{\circ}[i|),
\]
then
\begin{multline*}
\sum_{i=1}^{n} (-1)^{i} X[i|) = \sum_{i=1}^{i^{\circ}-1} (-1)^{i} X[i|) + \sum_{i=i^{\circ}}^n (-1)^{i} X[i|)\\
= \sum_{i=n-i^{\circ}+2}^{n} (-1)^{i - n + i^{\circ} - 1} X^{\circ}[i|) + \sum_{i=1}^{n-i^{\circ}+1} (-1)^{i + i^{\circ} - 1} X^{\circ}[i|)\phantom{XXXXXXXXXXX}\\
= \sum_{i=n-i^{\circ}+2}^{n} (-1)^{i + i^{\circ} - 1} X^{\circ}[i|) + \sum_{i=1}^{n-i^{\circ}+1} (-1)^{i + i^{\circ} - 1} X^{\circ}[i|) = (-1)^{i^{\circ} - 1} \sum_{i=1}^n (-1)^{i} X^{\circ}[i|).
\end{multline*}
Thus, using the equality~\eqref{lem:SCS:eq21} we conclude that
\[
\sum_{i=1}^{n} (-1)^{i} X[i|) = (-1)^{i^{\circ} - 1} \sum_{i=1}^{n} (-1)^{i} X^{\circ}[i|) = \begin{pmatrix}0 & \cdots & 0\end{pmatrix}.
\] 

\paragraph{The case if $n + k$ is even.} Let us define $\SCS_{i^{\circ}} \colon \M_{n\, k}(\F) \to \M_{n\,k}(\F)$ by
\[
\SCS_{i^{\circ}}(X) = \begin{pmatrix}
x_{i^{\circ}\,1} & \ldots & x_{i^{\circ}\,k}\\
\vdots & \vdots & \vdots\\
x_{n\,1} & \ldots & x_{n\,k}\\
-x_{1\,1} & \ldots & -x_{1\,k}\\
  \vdots & \vdots & \vdots\\
-x_{(i^{\circ}-1)\,1} & \ldots & -x_{(i^{\circ}-1)\,k}\\
\end{pmatrix}\;\;\mbox{for all}\;\;X \in \M_{n\,k}(\F).
\]
This map is clearly linear and invertible. Now we verify that $\SCS_{i^\circ}$ indeed satisfies the properties~\ref{SCSprop2}--\ref{SCSprop4}.

\paragraph{Property~\ref{SCSprop2}.} Let $X \in \M_{n\,k}(\F)$ and $\mathbf z \in \F^n$ be such that $\mathbf z^t X = 0$. It follows from~\eqref{lem:SCS:eq1} that $\mathbf{z^{\mathrm{odd}}}\SCS_{i^{\circ}}(X) = 0$, where $\mathbf{z^{\mathrm{even}}}$ is defined by
$\mathbf{z^{\mathrm{even}}} = (\mathbf z_{i^{\circ}}, \ldots, \mathbf z_n, -\mathbf z_1, \ldots, -\mathbf z_{i^{\circ}-1})^t.$ Since $n + k$ is even, 
\begin{multline*}
\mathbf{z^{\mathrm{even}}} = (\mathbf z_{i^{\circ}}, \ldots, \mathbf z_n, -\mathbf z_1, \ldots, -\mathbf z_{i^{\circ}-1})^t\\
= (\mathbf z_{i^{\circ}}, \ldots, \mathbf z_n, (-1)^{n + k + 1}\mathbf z_1, \ldots, (-1)^{n + k + 1}\mathbf z_{i^{\circ}-1})^t = \mathbf z^{\circ},
\end{multline*}
where $\mathbf z^{\circ}$ be as defined in~\eqref{lem:SCS:eq}.
\paragraph{Property~\ref{SCSprop3}.}   Let $X \in \M_{n\,k}(\F)$ be such that $\det_{n\,k}(X)$ = 0. Then by Lemma~\ref{lem:PermKNEvenSemicyclic}  we have
\[
\det_{n\,k}(\SCS_{i^\circ}(X)) = (-1)^{(n-i^{\circ})k}\det_{n\,k}(X) = 0.
\]

\paragraph{Property~\ref{SCSprop4}.} Assume that $k$ is odd. Let $X \in \M_{n\,k}$ be such that 
\begin{equation}\label{lem:SCS:eq22}
\sum_{i=1}^n (-1)^{i}X^{\circ}[i|) = \begin{pmatrix}0 & \cdots & 0\end{pmatrix},
\end{equation}
 where $X^{\circ} = \SCS_{i^{\circ}}(X)$. Rewrite the sum $\sum_{i=1}^{n} (-1)^{i} X[i|)$ as follows 
\begin{equation*}
\sum_{i=1}^n (-1)^{i} X[i|) = \sum_{i=1}^{i^{\circ}-1} (-1)^{i} X[i|) + \sum_{i=i^{\circ}}^n (-1)^{i} X[i|).
\end{equation*}
Since $n$ is odd in this case,
\[
\sum_{i=1}^{i^{\circ}-1} (-1)^{i} X[i|) = -\left(\sum_{i=n-i^{\circ}+2}^{n} (-1)^{i - n + i^{\circ} - 1} X^{\circ}[i|)\right)
\]
and
\[\sum_{i=i^{\circ}}^n (-1)^{i} X[i|) = \sum_{i=1}^{n-i^{\circ}+1} (-1)^{i + i^{\circ} - 1} X^{\circ}[i|),
\]
then
\begin{multline*}
\sum_{i=1}^n (-1)^{i} X[i|) = \sum_{i=1}^{i^{\circ}-1} (-1)^{i} X[i|) + \sum_{i=i^{\circ}}^n (-1)^{i} X[i|)\\
= -\left(\sum_{i=n-i^{\circ}+2}^{n} (-1)^{i - n + i^{\circ} - 1} X^{\circ}[i|)\right) + \sum_{i=1}^{n-i^{\circ}+1} (-1)^{i + i^{\circ} - 1} X^{\circ}[i|)\phantom{XXXXXXXXX}\\
= \sum_{i=n-i^{\circ}+2}^{n} (-1)^{i + i^{\circ} - 1} X^{\circ}[i|) + \sum_{i=1}^{n-i^{\circ}+1} (-1)^{i + i^{\circ} - 1} X^{\circ}[i|) = (-1)^{i^{\circ} - 1} \sum_{i=1}^n (-1)^{i} X^{\circ}[i|).
\end{multline*}
Thus, using the equality~\eqref{lem:SCS:eq22} we conclude that
\[
\sum_{i=1}^n (-1)^{i} X[i|) = (-1)^{i^{\circ} - 1} \sum_{i=1}^n (-1)^{i} X^{\circ}[i|) = \begin{pmatrix}0 & \cdots & 0\end{pmatrix}.
\]
\end{proof}

\section{Linear varieties of matrices annihilating $\det_{n\,k}$}
\label{sec:linvarsann}

In this section we prove our main result regarding linear varieties in matrix spaces annihilating $\det_{n\,k}$ (Theorem~\ref{thm:MaxDimKEvenAltSumZero}). 

Note that our argumentation in the proofs of Lemma~\ref{lem:ALSDetNKZeroDimKThenExistVec} and Theorem~\ref{thm:MaxDimKEvenAltSumZero} is motivated by the argumentation used in the proof of~\cite[Theorem~2]{Dieudonne1948} and similar to it, but for convenience we use the language of matroid theory. 

First we define the projection map which formalizes the concept of operation of striking out one row and one column of matrix. For example, this operation is performed when we use the Laplace expansion formula.

\begin{definition}Let $n \ge k > 1$ and $1 \le i \le n, 1 \le j \le k$. We denote by\linebreak $\ins_{n\, k}^{(i,j)} \colon [n-1]\times [k-1] \to [n]\times [k]$ 
a map defined by
\[
\ins_{n\, k}^{(i,j)}((a,b)) = \begin{cases}
(a+1,b+1), & a \ge i, b \ge j\\
(a+1,b), & a \ge i, b < j\\
(a,b+1), & a < i, b \ge j\\
(a,b), & \mbox{otherwise}
\end{cases}
\]
for all $(a,b) \in [n-1]\times [k-1]$.
\end{definition}

\begin{lemma}\label{lem:InsPresRowCol}Let $n, k \ge 1$, $1 \le i \le n, 1 \le j \le k$. The pairs $(a,b), (a',b') \in [n-1] \times [k-1]$ belong to the same row (column) if and only if  $\ins_{n\, k}^{(i,j)}((a,b)), \ins_{n\, k}^{(i,j)}((a',b')) \in [n] \times [k]$ belong to the same row (column).
\end{lemma}
\begin{proof}
It follows from the definition of $\ins_{n\, k}^{(i,j)}$ that $a = a'$ if and only if the first elements of tuples $\ins_{n\, k}^{(i,j)}((a,b))$ and $\ins_{n\, k}^{(i,j)}((a',b'))$ are equal.

Similarly,  $b = b'$ if and only if the second elements of tuples $\ins_{n\, k}^{(i,j)}((a,b))$ and\linebreak $\ins_{n\, k}^{(i,j)}((a',b'))$ are equal.
\end{proof}

In the following lemma we provide an algorithm to obtain a linear variety in\linebreak $\M_{(n-1)\,(k-1)}(\F)$ annihilating $\det_{(n-1)\,(k-1)}$ from a given linear variety in $\M_{n\,k}(\F)$ annihilating $\det_{n\,k}$ and a relationship between their codimensions and cobases.

\begin{lemma}\label{lem:StrikingOutLifting}Let $n \ge k > 1$, $1 \le i' \le n$, $1 \le j' \le k$, $\K \subseteq \M_{n\,k}(\F)$ be a linear variety, $B^* \in \mathcal B(\m(\K)^*)$ and $c_1, \ldots, c_n \in \F$.  Assume that 
\begin{equation}\label{lem:StrikingOutLifting:eqq1}
B^* \cap \left([n]\times \{j'\}\right) = \varnothing.
\end{equation}
Then
\begin{enumerate}[label=(\alph*), ref=(\alph*)]
\item\label{lem:StrikingOutLifting:part4}$\K' = \pi_{\ins_{n\,k}^{(i',j')}}(\K \cap U(x_{(i,j')} = c_i \mid 1 \le i \le n))$ is a linear variety.
\item\label{lem:StrikingOutLifting:part5} 
\begin{equation}\label{lem:StrikingOutLifting:ineq}
\codim(\K') \le \codim(\K) - \Big|B^* \cap \big(\{i'\} \times [k]\big)\Big| .
\end{equation}
\item\label{lem:StrikingOutLifting:part1} If ${B'}^*$ is a cobasis of $\m(\K')$, then 
\begin{equation}\label{lem:StrikingOutLifting:eqq2}
\big((B^* \cap \left(\{i'\}\times [k]\right)) \cup \ins_{n\,k}^{(i',j')}({B'}^*)\big) \in \mathcal I(\m^*(\K)).
\end{equation}
\item\label{lem:StrikingOutLifting:part3} If 
\begin{equation}\label{lem:StrikingOutLifting:eqq3}
\det_{n\,k} (X) = 0\;\;\mbox{for all}\;\;X \in \K
\end{equation}
 and $\{c_i = \delta_{i\,i'} \mid 1 \le i \le n\}$, then 
 \begin{equation}\label{lem:StrikingOutLifting:eqq4}
 \det_{(n-1)\,(k-1)} (X') = 0\;\;\mbox{for all}\;\;X' \in \K'.
 \end{equation}
\end{enumerate}
\end{lemma} 

\begin{proof}First note that $I = [n]\times \{j'\}$ is an independent set of $\m(\K)$ because $B^*$ is a cobasis of $\m(\K)$ and the equality~\eqref{lem:StrikingOutLifting:eqq1} holds. This fact is used throughout the proof.

\paragraph{\ref{lem:StrikingOutLifting:part4}} By applying Lemma~\ref{lem:LinVarSec} to $\K$, $I$ and $\{c_i\}$ we obtain that $\K \cap U(x_{(i,j')} = c_i \mid 1 \le i \le n)$.  Therefore, $\K'$ is a linear variety by Lemma~\ref{lem:LinImageAS} because $\pi_{\ins_{n\,k}^{(i',j')}}$ is a linear map.

\paragraph{\ref{lem:StrikingOutLifting:part5} and \ref{lem:StrikingOutLifting:part1}} From Lemma~\ref{lem:LinVarSecProj} applied to $\K$, $I = [n]\times \{j'\}$  and $f = \ins_{n\,k}^{(i',j')}$ we conclude that 
\begin{multline}\label{lem:StrikingOutLifting:eq10}
\ins_{n\,k}^{(i',j')}\;\mbox{is an isomorphism between}\;\m(\K')\;\mbox{and}\;\m',\;\\
\mbox{where}\;\m' = \biggl(\m(\K)/\left([n]\times \{j'\}\right)\biggr)|\biggl(\ins_{n\,k}^{(i',j')}\left([n-1]\times [k-1]\right)\biggr)
\end{multline} 
because $f^{-1}(I) = \left(\ins_{n\,k}^{(i',j')}\right)^{-1}\left([n]\times \{j'\}\right) = \varnothing$ by~\eqref{lem:StrikingOutLifting:eqq1}. This claim is used in the proofs of the parts~\ref{lem:StrikingOutLifting:part5} and \ref{lem:StrikingOutLifting:part1} of the lemma.\bigskip

\noindent Let us prove the part~\ref{lem:StrikingOutLifting:part5} of the lemma. By the claim~\eqref{lem:StrikingOutLifting:eq10} we have
\begin{equation}\label{lem:DetNKZeroStrikingOutDetZero:eq102}
\codim(\K') = r^*\left(\m(\K')\right) = r^*\left(\m'\right).
\end{equation}
Corollary~\ref{lem:MatroidCobaseContractionRestriction}\ref{lem:MatroidCobaseContractionRestriction:part2} applied to $\m = \m(\K),$ $E = [n]\times [k]$, $T =[n]\times \{j'\}$,\linebreak $S = \ins_{n\,k}^{(i',j')}\left([n-1]\times [k-1]\right)$ and $I^* = B^*$ implies that
\begin{equation}\label{lem:DetNKZeroStrikingOutDetZero:eq2}
r^*\left(\m'\right)
\le r^*\left(\m(\K)\right) - \Bigg|\bigg(B^* \setminus \left([n]\times \{j'\}\right)\bigg) \setminus \left(\ins_{n\,k}^{(i',j')}\left([n-1]\times [k-1]\right)\right)\Bigg|
\end{equation}

Let us simplify the left-hand side of~\eqref{lem:DetNKZeroStrikingOutDetZero:eq2}. First,
\begin{equation}\label{lem:DetNKZeroStrikingOutDetZero:eq5}
r^*\left(\m(\K)\right) = \codim(\K)
\end{equation}
by Corollary~\ref{cor:matLinVarDim}. Second, the equality~\eqref{lem:StrikingOutLifting:eqq1} implies that
\begin{equation}\label{lem:DetNKZeroStrikingOutDetZero:eq6}
\Big(B^* \setminus \left([n]\times \{j'\}\right)\Big) \setminus \left(\ins_{n\,k}^{(i',j')}\big([n-1]\times [k-1]\big)\right) = B^* \setminus \left(\ins_{n\,k}^{(i',j')}\big([n-1]\times [k-1]\big)\right).
\end{equation}
Since 
$\ins_{n\,k}^{(i',j')}\big([n-1]\times [k-1]\big) = \Big([n]\times [k]\Big) \setminus \Big(\big([n]\times \{j'\}\big) \cup \big(\{i'\}\times [k]\big)\Big)$
and $B^* \subseteq \big([n]\times [k]\big)$, then
\begin{equation}\label{lem:DetNKZeroStrikingOutDetZero:eq7}
B^* \setminus \left(\ins_{n\,k}^{(i',j')}\big([n-1]\times [k-1]\big)\right)\\
= B^* \cap \Big(\big([n]\times \{j'\}\big) \cup \big(\{i'\}\times [k]\big)\Big).
\end{equation}
By the equality~\eqref{lem:StrikingOutLifting:eqq1} we have
\begin{equation}\label{lem:DetNKZeroStrikingOutDetZero:eq8}
B^* \cap \Big(\big([n]\times \{j'\}\big) \cup \big(\{i'\}\times [k]\big)\Big) = B^* \cap \big(\{i'\}\times [k]\big).
\end{equation}
The equalities~\eqref{lem:DetNKZeroStrikingOutDetZero:eq6}--\eqref{lem:DetNKZeroStrikingOutDetZero:eq8} aligned together yield the equality
\begin{equation}\label{lem:DetNKZeroStrikingOutDetZero:eq4}
\Big(B^* \setminus \big([n]\times \{j'\}\big)\Big) \setminus \Big(\ins_{n\,k}^{(i',j')}\big([n-1]\times [k-1]\big)\Big) = B^* \cap \big(\{i'\}\times [k]\big).
\end{equation}
Therefore, by substituting~\eqref{lem:DetNKZeroStrikingOutDetZero:eq102}, \eqref{lem:DetNKZeroStrikingOutDetZero:eq5} and~\eqref{lem:DetNKZeroStrikingOutDetZero:eq4} into the inequality~\eqref{lem:DetNKZeroStrikingOutDetZero:eq2} we obtain the inequality~\eqref{lem:StrikingOutLifting:ineq}. This establishes the part~\ref{lem:StrikingOutLifting:part5} of the lemma.\bigskip

\noindent Let us prove the part~\ref{lem:StrikingOutLifting:part1} of the lemma. Let ${B'}^*$ be a cobasis of $\m(\K')$ and ${I'}^*$ be defined by
\begin{equation}\label{lem:DetNKZeroStrikingOutDetZero:eq9}
{I'}^* = \bigg(\Big(B^* \setminus \big([n]\times \{j'\}\big)\Big) \setminus \Big(\ins_{n\,k}^{(i',j')}\big([n-1]\times [k-1]\big)\Big)\bigg) \cup \ins_{n\,k}^{(i',j')}\left({B'}^*\right).
\end{equation}
 The set $\ins_{n\,k}^{(i',j')}({B'}^*)$ is a cobasis of $\m'$ as it follows from~\eqref{lem:StrikingOutLifting:eq10}. Hence, \begin{equation}\label{lem:DetNKZeroStrikingOutDetZero:eq901}
 {I'}^* \in \mathcal I(\m^*(\K))
  \end{equation}
  by Corollary~\ref{lem:MatroidCobaseContractionRestriction}\ref{lem:MatroidCobaseContractionRestriction:part1} applied to $\m = \m(\K),$ $E = [n]\times [k]$, $T =[n]\times \{j'\}$,  $S = \ins_{n\,k}^{(i',j')}\big([n-1]\times [k-1]\big)$, $I^* = B^*$ and $\ins_{n\,k}^{(i',j')}\left({B'}^*\right)$. By substituting~\eqref{lem:DetNKZeroStrikingOutDetZero:eq4} into~\eqref{lem:DetNKZeroStrikingOutDetZero:eq9} we obtain that 
\begin{equation}\label{lem:DetNKZeroStrikingOutDetZero:eq902}
 {I'}^* = \Big(B^* \cap \big(\{i'\}\times [k]\big)\Big) \cup \ins_{n\,k}^{(i',j')}\left({B'}^*\right).
\end{equation}
Then, aligning~\eqref{lem:DetNKZeroStrikingOutDetZero:eq901} and~\eqref{lem:DetNKZeroStrikingOutDetZero:eq902} together yields the equality~\eqref{lem:StrikingOutLifting:eqq2} as it is claimed in the part~\ref{lem:StrikingOutLifting:part1} of the lemma.

\paragraph{\ref{lem:StrikingOutLifting:part3}} Let  $X' = (x'_{i\,j}) \in \K'$. It means that there exists $X = (x_{i\,j}) \in A \cap U(x_{(i,j')} = \delta_{i\,i'} \mid 1 \le i \le n)$ such that $X' = \pi_{\ins_{n\,k}^{(i',j')}}(X) = X(i'|j')$. This implies that $X$ has the form
\begin{equation}\label{lem:DetNKZeroStrikingOutDetZero:eq1}
\begin{pmatrix}
x'_{1\,1} & \cdots & x'_{1\, (j'-1)} & 0 & x'_{1\,j'} & \cdots & x'_{1\, (k-1)}\\
\vdots & \ddots & \vdots & \vdots & \vdots & \ddots & \vdots\\
x'_{(i'-1)\,1} & \cdots & x'_{(i'-1)\, (j'-1)} & 0 & x'_{(i'-1)\,j'} & \cdots & x'_{(i'-1)\,(k-1)}\\
* & \cdots & * & 1 & * & \cdots & *\\
x'_{i'\,1} & \cdots & x'_{i'\, (j'-1)} & 0 & x'_{i'\,j'} & \cdots & x'_{i'\, (k-1)}\\
\vdots & \ddots & \vdots & \vdots & \vdots & \ddots & \vdots\\
x'_{(n-1)\,1} & \cdots & x'_{(n-1)\, (j'-1)} & 0 & x'_{(n-1)\,j'} & \cdots & x'_{(n-1)\,(k-1)}
\end{pmatrix}.
\end{equation}

By the Laplace expansion along the $j'$-th column (Lemma~\ref{lem:DetNKLaplaceExp}), we have an equality
\begin{equation}\label{lem:DetNKZeroStrikingOutDetZero:eq10001}
\det_{n\, k}(X) = \sum_{i = 1}^n (-1)^{i+j'} x_{i\, j'} \det_{(n-1)\,(k-1)}\Bigl(X(i|j)\Bigr),
\end{equation}
where $X(i|j)$ denotes a matrix obtained from $X$ by striking out the $i$-th row and the $j$-th column. Since $x_{i'\,j'} = 1$ is the only nonzero value in the sequence $x_{1\,j'}, \ldots, x_{n\,j'}$, then
\begin{equation}\label{lem:DetNKZeroStrikingOutDetZero:eq10002}
\sum_{i = 1}^n (-1)^{i+j'} x_{i\, j'} \det_{(n-1)\,(k-1)}\Bigl(X(i|j)\Bigr) = (-1)^{i'+j'}\det_{(n-1)\,(k-1)}\Bigl(X(i'|j')\Bigr).
\end{equation}
Note that $X(i'|j') = X'$ because $X$ has the form~\eqref{lem:DetNKZeroStrikingOutDetZero:eq1}. Therefore, 
\begin{equation}\label{lem:DetNKZeroStrikingOutDetZero:eq10003}
\det_{(n-1)\,(k-1)}(X') = \det_{(n-1)\,(k-1)}\Bigl(X(i'|j')\Bigr)\\ \overset{\eqref{lem:DetNKZeroStrikingOutDetZero:eq10001}\;\mbox{\scriptsize and}\;\eqref{lem:DetNKZeroStrikingOutDetZero:eq10002}}{=\joinrel=\joinrel=\joinrel=\joinrel=\joinrel=\joinrel=} (-1)^{i' + j'}\det_{n\, k}(X) \overset{\eqref{lem:StrikingOutLifting:eqq3}}{=\joinrel=\joinrel=} 0.
\end{equation}
Since~\eqref{lem:DetNKZeroStrikingOutDetZero:eq10003} holds for all $X' \in \K'$, the equality~\eqref{lem:StrikingOutLifting:eqq4} is established.
\end{proof}

\begin{corollary}\label{cor:LetASLcodimStrikingOutIfColumnPrecise}Let $k > 1$, $\K \subseteq \M_{n\,k}(\mathbb F)$ be a linear variety such that $\det_{n\,k} (X) = 0$ for all $X \in A$, and $B^* \in \mathcal B(\m(\K)^*)$. If there exist $1 \le i' \le n$ and $1 \le j' \le k$ such that $\Big|B^* \cap \big(\{i'\}\times [k]\big)\Big| = l$ and $B^* \cap \big([n]\times \{j'\}\big) = \varnothing$, then there exists a linear variety $\K' \subseteq \M_{(n-1)\,(k-1)}(\mathbb F)$ with $\codim(\K') \le \codim(\K) - l$ such that $\det_{(n-1)\,(k-1)} (X') = 0$ for all $X' \in \K'$.
\end{corollary}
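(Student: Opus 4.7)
The corollary is essentially an immediate repackaging of Lemma~\ref{lem:StrikingOutLifting}, so my plan is simply to invoke that lemma with the correct specialization of its parameters and collect the three relevant conclusions.

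First, I would set $c_i = \delta_{i\,i'}$ for $1 \le i \le n$, and define
\[
\K' = \pi_{\ins_{n\,k}^{(i',j')}}\Bigl(\K \cap U\bigl(x_{(i,j')} = c_i \,\big|\, 1 \le i \le n\bigr)\Bigr).
\]
The hypotheses of Lemma~\ref{lem:StrikingOutLifting} are met: $k > 1$, $\K \subseteq \M_{n\,k}(\F)$ is a linear variety, $B^*$ is a cobasis of $\m(\K)$, and the chosen $j'$ satisfies $B^* \cap ([n]\times \{j'\}) = \varnothing$ by assumption.

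Next I would read off the three conclusions from the respective parts of Lemma~\ref{lem:StrikingOutLifting}. Part~\ref{lem:StrikingOutLifting:part4} guarantees that $\K'$ is a linear variety, and hence lies in $\M_{(n-1)\,(k-1)}(\F)$ once we identify $\F^{[n-1]\times [k-1]}$ with $\M_{(n-1)\,(k-1)}(\F)$ in the usual way. Part~\ref{lem:StrikingOutLifting:part5} gives the codimension bound
\[
\codim(\K') \le \codim(\K) - \bigl|B^* \cap (\{i'\}\times [k])\bigr| = \codim(\K) - l,
\]
using the hypothesis $|B^* \cap (\{i'\}\times [k])| = l$. Finally, part~\ref{lem:StrikingOutLifting:part3} applies precisely because we chose $c_i = \delta_{i\,i'}$ and because $\det_{n\,k}(X) = 0$ for every $X \in \K$, and it yields $\det_{(n-1)\,(k-1)}(X') = 0$ for every $X' \in \K'$.

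There is no real obstacle here: the corollary is a direct specialization and there is nothing left to verify beyond matching the notation. I would keep the proof to a few lines, presenting it essentially as an application of Lemma~\ref{lem:StrikingOutLifting} parts~\ref{lem:StrikingOutLifting:part4}, \ref{lem:StrikingOutLifting:part5}, and~\ref{lem:StrikingOutLifting:part3} with $c_i := \delta_{i\,i'}$.
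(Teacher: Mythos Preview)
Your proposal is correct and matches the paper's approach exactly: the corollary is stated in the paper without proof as a direct consequence of Lemma~\ref{lem:StrikingOutLifting}, and you have correctly identified the specialization $c_i = \delta_{i\,i'}$ and invoked parts~\ref{lem:StrikingOutLifting:part4}, \ref{lem:StrikingOutLifting:part5}, and~\ref{lem:StrikingOutLifting:part3} to obtain the three required conclusions.
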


\begin{corollary}\label{cor:LetASLcodimStrikingOutIfColumnAtLeastOne} Let $k > 1$, $\K \subseteq \M_{n\,k}(\mathbb F)$ be a linear variety such that $\det_{n\,k} (X) = 0$ for all $X \in A$, and  $B^* \in \mathcal B(\m(\K)^*)$. If there exists $1 \le j' \le k$ such that $B^* \cap \big([n]\times \{j'\}\big) = \varnothing$, then there exists a linear variety $\K' \subseteq \M_{(n-1)\,(k-1)}(\mathbb F)$ with $\codim(\K') \le \codim(\K) - 1$ such that $\det_{(n-1)\,(k-1)} (X') = 0$ for all $X' \in \K'$.
\end{corollary}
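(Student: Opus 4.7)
The plan is to derive this statement as an immediate specialization of Corollary~\ref{cor:LetASLcodimStrikingOutIfColumnPrecise}. The only substantive work is to exhibit a row index $i'$ for which the row slice of the cobasis is nonempty, i.e.\ $|B^*\cap(\{i'\}\times[k])|\ge 1$; once that is done, Corollary~\ref{cor:LetASLcodimStrikingOutIfColumnPrecise} applied to the pair $(i',j')$ with $l=|B^*\cap(\{i'\}\times[k])|\ge 1$ produces a linear variety $\K'\subseteq\M_{(n-1)\,(k-1)}(\F)$ on which $\det_{(n-1)\,(k-1)}$ vanishes and with $\codim(\K')\le\codim(\K)-l\le\codim(\K)-1$.

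To produce such an $i'$, I would first argue that $B^*$ itself is nonempty. By Corollary~\ref{cor:matLinVarDim} we have $|B^*|=r^*(\m(\K))=\codim(\K)$, so it suffices to rule out $\K=\M_{n\,k}(\F)$. This is where the hypothesis $\det_{n\,k}(X)=0$ for all $X\in\K$ enters: the Cullis determinant is not identically zero on $\M_{n\,k}(\F)$ (for instance, by Corollary~\ref{cor:CullisAsSumDet} the matrix $E_{1\,1}+\cdots+E_{k\,k}$ has Cullis determinant equal to $\sgn_{[n]}(\{1,\ldots,k\})=1$), so $\K$ is a proper subset and $\codim(\K)\ge 1$. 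Pick any element $(i',j'')\in B^*$; the hypothesis $B^*\cap([n]\times\{j'\})=\varnothing$ forces $j''\ne j'$, and by construction $i'$ satisfies $|B^*\cap(\{i'\}\times[k])|\ge 1$, which is all we need.

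I do not expect any real obstacle: the content is entirely captured by the precise Corollary~\ref{cor:LetASLcodimStrikingOutIfColumnPrecise}, and the present statement just replaces the precise count $l$ by the lower bound $1$. The only mildly nontrivial point is justifying the nonemptiness of $B^*$, which reduces to observing that $\det_{n\,k}$ has at least one nonzero value on $\M_{n\,k}(\F)$ when $n\ge k$.
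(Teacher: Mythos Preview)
Your proposal is correct and matches the paper's (implicit) approach: the paper states this corollary without proof immediately after Corollary~\ref{cor:LetASLcodimStrikingOutIfColumnPrecise}, treating it as a direct specialization of Lemma~\ref{lem:StrikingOutLifting}, and your derivation via Corollary~\ref{cor:LetASLcodimStrikingOutIfColumnPrecise} with $l=|B^*\cap(\{i'\}\times[k])|\ge 1$ is exactly the intended reading. Your justification that $B^*\neq\varnothing$ (because $\det_{n\,k}$ is not identically zero, hence $\codim(\K)\ge 1$) is a detail the paper leaves implicit but which is indeed needed.
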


Corollary~\ref{cor:LetASLcodimStrikingOutIfColumnAtLeastOne} allows us to provide a lower bound the codimension of linear variety consisting of matrices annihilating $\det_{n\,k}$.

\begin{theorem}\label{thm:ALSDetNKZeroCodimGEK}Let $n \ge k \ge 1$ and $\K \subseteq \M_{n\,k}(\mathbb F)$ be a linear variety such that $\det_{n\,k} (X) = 0$ for every $X \in \K.$ Then $\codim (\K) \ge k.$
\end{theorem}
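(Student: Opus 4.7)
The plan is to prove the theorem by induction on $k$, using Corollary~\ref{cor:LetASLcodimStrikingOutIfColumnAtLeastOne} as the engine of the inductive step and the cobasis of $\m(\K)$ as the dispatcher between the two branches of a dichotomy.

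For the base case $k = 1$, I would invoke Corollary~\ref{cor:CullisAsSumDet} which, specialized to a single column, gives $\det_{n\,1}(X) = \sum_{i=1}^n (-1)^{i-1} x_{i\,1}$. Hence the condition $\det_{n\,1}(X)=0$ for all $X \in \K$ forces $\K$ to lie in the hyperplane cut out by the alternating sum; in particular $\K \subsetneq \M_{n\,1}(\F)$, so $\codim(\K) \ge 1$, as required.

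For the inductive step, assume the statement for $k-1$ and let $\K \subseteq \M_{n\,k}(\F)$ with $\det_{n\,k}$ vanishing on $\K$. Pick any cobasis $B^* \subseteq [n]\times [k]$ of $\m(\K)$, so that $|B^*| = r^*(\m(\K)) = \codim(\K)$ by Corollary~\ref{cor:matLinVarDim}. Now split into two cases according to whether $B^*$ meets every column of $[n]\times[k]$:
\begin{itemize}
\item If for every $1 \le j \le k$ the intersection $B^* \cap \bigl([n]\times\{j\}\bigr)$ is non-empty, then the $k$ pairwise disjoint columns each contribute at least one element to $B^*$, so $\codim(\K) = |B^*| \ge k$ and we are done.
\item Otherwise there exists some column index $j'$ with $B^* \cap \bigl([n]\times\{j'\}\bigr) = \varnothing$. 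Apply Corollary~\ref{cor:LetASLcodimStrikingOutIfColumnAtLeastOne} to obtain a linear variety $\K' \subseteq \M_{(n-1)\,(k-1)}(\F)$ with $\det_{(n-1)\,(k-1)}$ vanishing on $\K'$ and $\codim(\K') \le \codim(\K) - 1$. By the induction hypothesis, $\codim(\K') \ge k-1$, so $\codim(\K) \ge k$.
\end{itemize}

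The substantive content has already been pushed into Lemma~\ref{lem:StrikingOutLifting} and its corollaries, so the main step here is just a clean dichotomy on the cobasis. The only thing one has to be careful about is the bookkeeping of dimensions: the recursion replaces $(n,k)$ by $(n-1,k-1)$, which preserves the condition $n \ge k$ needed to invoke the Laplace expansion inside Lemma~\ref{lem:StrikingOutLifting}\ref{lem:StrikingOutLifting:part3}, and the base case $k=1$ is the only situation where the reduction is not available. No deeper obstacle appears; the hard work was done in establishing the matroid-theoretic translation that allowed the cobasis of $\m(\K)$ to detect a column absent from it.
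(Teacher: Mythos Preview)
Your proof is correct and follows essentially the same approach as the paper's: induction on $k$ with the same base case and the same use of Corollary~\ref{cor:LetASLcodimStrikingOutIfColumnAtLeastOne} in the step. The only cosmetic difference is that the paper phrases the inductive step as a proof by contradiction (assuming $\codim(\K)<k$ forces $|B^*|<k$ and hence an empty column) rather than your direct dichotomy on whether $B^*$ meets every column.
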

\begin{proof}
We prove it by induction on $k$. If $k = 1$ and $\K$ is such that $\det_{n\,1} (X) = 0$ for every $X \in \K$, then this condition implies that $\sum_{i=1}^{n}(-1)^{i-1}x_{i\,1} = 0$. Hence, 
$$\K \subseteq \AS\big((1, -1, 1, \ldots, (-1)^{n-1}),0\big)$$
 and consequently
\[
\codim (\K) \ge \codim\Big(\AS\big((1, -1, 1, \ldots, (-1)^{n-1}),0\big)\Big) = 1 = k.
\]

Assume now that $k \ge 2$, the statement of the lemma holds for $k-1$ and $\K \subseteq \M_{n\,k}(\mathbb F)$ is a linear variety such that $\det_{n\, k} (X) = 0$ for every $X \in \K$. We will prove that $\codim (\K) \ge k$ by contradiction. Suppose that $\codim (\K) < k.$ Let $B^*$ be any cobasis of $\m(\K)$. Since $\left|B^*\right| = \codim (\K) < k$, there exists $1 \le j' \le k$ such that $B^* \cap \big([n]\times \{j'\}\big) = \varnothing$. Hence, $B^*, \K$ and $j'$ satisfy the conditions of Corollary~\ref{cor:LetASLcodimStrikingOutIfColumnAtLeastOne} and there exists a linear variety $\K' \subseteq \M_{n-1\,k-1}(\F)$ with
\[\codim(\K') \le \codim(\K) - 1 < k - 1\]
which leads to a contradiction. Therefore, $\codim (\K) \ge k$.
\end{proof}

Having the linear variety $\K$ annihilating $\det_{n\,k}$ of minimal possible codimension, by applying Corollary~\ref{cor:LetASLcodimStrikingOutIfColumnPrecise} we obtain a restrictive condition on the cobases of $\m(\K)$, as detailed in the lemma below. This condition will be used in the sequel. 

\begin{lemma}\label{lem:ALSDetNKZeroCapLE1}Assume that  $n \ge k > 1$. Let $\K \subseteq \M_{n\,k}(\mathbb F)$ be a linear variety such that $\codim(\K) = k$ and $\det_{n\,k} (X) = 0$ for all $X \in \K$, and $B^* \in \mathcal B(\m(\K)^*)$. If there is  $1 \le j' \le k$ such that $B^* \cap \big([n]\times \{j'\}\big) = \varnothing$, then 
$$\Big|B^* \cap \big(\{i\}\times [k]\big)\Big| \le 1\;\;\mbox{for all}\;\; 1 \le i \le n. $$
\end{lemma}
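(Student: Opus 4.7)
The plan is to argue by contradiction using the tools already established in this section, specifically Corollary~\ref{cor:LetASLcodimStrikingOutIfColumnPrecise} together with the general codimension lower bound Theorem~\ref{thm:ALSDetNKZeroCodimGEK}. Suppose that there exists some $1 \le i' \le n$ with
\[
l = \Big|B^* \cap \big(\{i'\}\times [k]\big)\Big| \ge 2.
\]
We use this $i'$ together with the $j'$ provided by the hypothesis, for which we already know that $B^* \cap ([n]\times\{j'\}) = \varnothing$.

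Since $\K$ annihilates $\det_{n\,k}$, since $B^*$ is a cobasis of $\m(\K)$, and since the assumption $B^* \cap ([n]\times\{j'\}) = \varnothing$ holds, all hypotheses of Corollary~\ref{cor:LetASLcodimStrikingOutIfColumnPrecise} are satisfied for our chosen $(i',j')$. Applying that corollary then yields a linear variety $\K' \subseteq \M_{(n-1)\,(k-1)}(\F)$ with the property that $\det_{(n-1)\,(k-1)}(X') = 0$ for all $X' \in \K'$ and whose codimension satisfies
\[
\codim(\K') \le \codim(\K) - l = k - l.
\]

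On the other hand, since $k - 1 \ge 1$, Theorem~\ref{thm:ALSDetNKZeroCodimGEK} applied to $\K' \subseteq \M_{(n-1)\,(k-1)}(\F)$ gives the lower bound $\codim(\K') \ge k - 1$. Combining the two inequalities yields $k - 1 \le k - l$, i.e., $l \le 1$, which contradicts the assumption $l \ge 2$. Therefore no such $i'$ exists, and the claim follows. No substantial obstacle is expected here: the entire argument is a one-shot application of the striking-out lifting (which reduces both the codimension by $l$ and the size by one in each dimension) against the universal codimension lower bound established earlier in this section.
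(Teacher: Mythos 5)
Your proof is correct and is essentially identical to the paper's own argument: both proceed by contradiction, apply Corollary~\ref{cor:LetASLcodimStrikingOutIfColumnPrecise} with the row $i'$ carrying $l\ge 2$ cobasis elements and the empty column $j'$ to produce $\K'\subseteq \M_{(n-1)\,(k-1)}(\F)$ with $\codim(\K')\le k-l<k-1$, and then contradict the lower bound of Theorem~\ref{thm:ALSDetNKZeroCodimGEK}. No gaps.
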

\begin{proof}By contradiction. Suppose that $\K$ and $B^*$ satisfy the conditions of the lemma and $1 \le i' \le n$ is such that
\begin{equation}\label{lem:ALSDetNKZeroCapLE1:eq1}
\Big|B^* \cap \big(\{i'\}\times [k]\big)\Big| = l > 1.
\end{equation}
Hence, by Corollary~\ref{cor:LetASLcodimStrikingOutIfColumnPrecise} there exists a linear variety $\K' \subseteq \M_{(n-1)\,(k-1)}(\mathbb F)$ with
\[
\codim(\K') \le \codim(\K) - l \overset{\eqref{lem:ALSDetNKZeroCapLE1:eq1}}{=\joinrel=\joinrel=} k - l < k - 1
\]
such that $\det_{(n-1)\,(k-1)} (X') = 0$ for all $X' \in \K'$. This contradicts Theorem~\ref{thm:ALSDetNKZeroCodimGEK}. Thus, the statement of the lemma is true.
\end{proof}

In the following lemma we prove that every linear variety annihilating $\det_{n\,k}$ of minimal possible codimension is a vector space of matrices whose rows satisfy a common linear relation.

\begin{lemma}\label{lem:ALSDetNKZeroDimKThenExistVec}
Assume that $n \ge k + 2$. Let $\K \subseteq \M_{n\,k}(\mathbb F)$ be a linear variety with $\codim(\K) = k$ such that $\det_{n\,k} (X) = 0$ for every $X \in \K.$ Then the following is true:
\begin{enumerate}[label=\textit{\textbf{P\arabic*.}}, ref=\textit{\textbf{P\arabic*}}] 
\item\label{lem:ALSDetNKZeroDimKThenExistVec:P1} If $B^*$ is a cobasis of $\m(\K)$, then $B^* \cap \big([n] \times \{j\}\big) \neq \varnothing$ for all $1 \le j \le k$.
\item\label{lem:ALSDetNKZeroDimKThenExistVec:P2} There exists $\mathbf 0 \neq \mathbf z \in \mathbb F^n$  such that $\K = \{X \in \M_{n\,k}(\F) \mid \mathbf z^t X = \mathbf 0\}.$
\end{enumerate}
\end{lemma}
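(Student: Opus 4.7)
I would prove the lemma by joint induction on $k$, establishing P1 and P2 together. The base case $k=1$ is immediate: since $\det_{n,1}(X)=\sum_i(-1)^{i-1}x_{i,1}$, the hypotheses force $\K$ to equal the codimension-one hyperplane $\{X : \sum_i (-1)^{i-1} x_{i,1} = 0\}$, so $\mathbf z = (1,-1,\ldots,(-1)^{n-1})^t$ witnesses P2, while P1 is vacuous since there is only one column. For the inductive step, note that $n-1 \ge k+1 = (k-1)+2$, so the induction hypothesis applies to linear varieties in $\M_{n-1,k-1}(\F)$.

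\textbf{Inductive step, P1.} Suppose for contradiction that some cobasis $B^*$ of $\m(\K)$ is disjoint from $[n]\times\{j_0\}$. By Lemma~\ref{lem:ALSDetNKZeroCapLE1}, every row meets $B^*$ in at most one element, so $B^*$ occupies exactly $k$ distinct rows, leaving $n-k\ge 2$ rows free. For each free row $i^*$, applying Lemma~\ref{lem:StrikingOutLifting} with $i'=i^*$, $j'=j_0$ and $c_i=\delta_{i,i^*}$ produces a linear variety $\K'_{i^*} \subseteq \M_{n-1,k-1}(\F)$ annihilating $\det_{n-1,k-1}$, with $k-1 \le \codim(\K'_{i^*}) \le k$ (the upper bound from Lemma~\ref{lem:StrikingOutLifting}\ref{lem:StrikingOutLifting:part5} using $|B^* \cap (\{i^*\}\times [k])|=0$, the lower bound from Theorem~\ref{thm:ALSDetNKZeroCodimGEK}). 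In the case $\codim(\K'_{i^*})=k-1$, the inductive hypothesis P2 provides a row-relation vector $\mathbf z'_{i^*} \in \F^{n-1}$; in the case $\codim(\K'_{i^*})=k$, Lemma~\ref{lem:StrikingOutLifting}\ref{lem:StrikingOutLifting:part1} lifts a cobasis of $\K'_{i^*}$ to a new cobasis of $\m(\K)$ still disjoint from column $j_0$, permitting an iterated application of the argument with the second free row. Combining these structural constraints across the two free rows, I would show that every $X \in \K$ satisfies an additional linear relation, forcing $\codim(\K)>k$ and contradicting $\codim(\K)=k$.

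\textbf{Inductive step, P2.} Once P1 is established, a cobasis $B^*$ (of size $|B^*|=k$) meets each of the $k$ columns in exactly one element. Choose a reduced equational representation $(A,\mathbf b)$ of $\K$ with respect to $B^*$ via Lemma~\ref{lem:ReducedERExists}. If some row of $A$ had a nonzero entry at a matrix position outside its designated column, Lemma~\ref{lem:ALSReducedBNonzeroExchange} would produce a new cobasis missing a column, contradicting P1. Therefore the $j$-th equation takes the form $\sum_{i=1}^n z^{(j)}_i x_{i,j} = b_j$ for some nonzero vector $z^{(j)} \in \F^n$. To finish, I would show that all $z^{(j)}$ coincide (up to scalar) with a single $\mathbf z$; once this is done, $\mathbf z^t X$ is constant on $\K$ in every column, and an appropriate translation in $\K$ (using the variety structure) reduces this to $\mathbf z^t X = 0$. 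The cross-column comparison would be carried out by expanding $\det_{n,k}(X) = 0$ via Lemma~\ref{lem:DetNKLaplaceExp} along two different columns on the $(n-1)k$-dimensional $\K$ and matching the resulting polynomial identities, using the exchange freedom between cobases to rotate the role of any two columns.

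\textbf{Main obstacle.} The delicate step is precisely this cross-column comparison: structurally the proof must convert the single identity $\det_{n,k}|_\K \equiv 0$ into the statement that the \emph{same} row-vector $\mathbf z$ governs each of the $k$ column relations. The column relations $z^{(j)}$ arise independently from the reduced equational representation, and forcing their proportionality requires careful analysis of how the alternating-sum expansion in Corollary~\ref{cor:CullisAsSumDet} ties together the $k$ columns. The induction hypothesis is what ultimately makes this tractable: the slice-and-project operation of Lemma~\ref{lem:StrikingOutLifting} enters a $(k-1)$-setting where a common vector $\mathbf z'$ is guaranteed by P2 at the previous level, and one shows that the lifted $\mathbf z'$ coincides (up to the dropped coordinate) with each of the $z^{(j)}$ for $j\ne j_0$, forcing their mutual equality.
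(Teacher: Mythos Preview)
Your inductive architecture matches the paper's, but the execution of both P1 and P2 misses the key mechanisms. For P1, the paper does \emph{not} slice at a free row: it picks a row $i'$ with $|B^*\cap(\{i'\}\times[k])|=1$, so that Lemma~\ref{lem:StrikingOutLifting}\ref{lem:StrikingOutLifting:part5} gives $\codim(\K')\le k-1$, hence $\codim(\K')=k-1$ exactly. Then P2 at level $k-1$ yields a vector $\mathbf z'$ with $\mathbf z'_{i_0}=1$, and the crucial observation is that ${B'}^*=\{(i_0,1),\ldots,(i_0,k-1)\}$ is a cobasis of $\m(\K')$ concentrated in a single row. Lifting via Lemma~\ref{lem:StrikingOutLifting}\ref{lem:StrikingOutLifting:part1} produces a cobasis of $\m(\K)$ with $k-1$ elements in one row, which for $k\ge 3$ contradicts Lemma~\ref{lem:ALSDetNKZeroCapLE1}. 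Your choice of a free row leaves $\codim(\K'_{i^*})$ undetermined between $k-1$ and $k$, and your plan to ``combine constraints across two free rows'' to manufacture an extra linear relation is not an argument---nothing in the setup forces the two $\mathbf z'_{i^*}$ to be compatible or to lift to a new equation on $\K$. Note also that the paper treats $k=2$ separately (by an explicit slice and computation), since the lifting contradiction needs $k-1\ge 2$.

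For P2 your reduction to column-concentrated equations $\sum_i z^{(j)}_i x_{i,j}=b_j$ is exactly right, but your proposed proportionality arguments both fail as stated. Expanding $\det_{n,k}$ along two columns and ``matching polynomial identities'' is too vague to assess, and your fallback---applying Lemma~\ref{lem:StrikingOutLifting} to drop a column $j_0$---is blocked precisely by P1, which says no cobasis misses a column. The paper's device is a column operation: if $z^{(p_1)},z^{(p_2)}$ are not proportional, let $T$ subtract column $p_1$ from column $p_2$; then $\K_T=T(\K)$ still annihilates $\det_{n,k}$ (Theorem~\ref{thm:DetNKBasProp}\ref{thm:DetNKBasProp:mark1}), and after one row reduction the new equational matrix is reduced with respect to $B^*$ but has a nonzero off-column entry in row $p_2$, so Lemma~\ref{lem:ALSReducedBNonzeroExchange} produces a cobasis of $\m(\K_T)$ missing column $p_2$---contradicting P1 applied to $\K_T$. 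Finally, the paper disposes of the constant vector $\mathbf b'$ not by ``an appropriate translation'' but by exhibiting an explicit $X^{\mathrm n}\in\K$ with $\det_{n,k}(X^{\mathrm n})\neq 0$ whenever $\mathbf b'\neq 0$.
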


\begin{proof}The statement of the theorem is clearly true for $k = 1$. To prove it for $k > 1$ we prove separately the following claims:
\begin{enumerate}[label=(\alph*), ref=(\alph*)]
\item\label{lem:ALSDetNKZeroDimKThenExistVec:part1} \ref*{lem:ALSDetNKZeroDimKThenExistVec:P1} holds for $k = 2$.
\item\label{lem:ALSDetNKZeroDimKThenExistVec:part2} If $k \ge 3$, then~\ref*{lem:ALSDetNKZeroDimKThenExistVec:P2} for $k-1$ implies~\ref*{lem:ALSDetNKZeroDimKThenExistVec:P1} for $k$.
\item\label{lem:ALSDetNKZeroDimKThenExistVec:part3} If~\ref*{lem:ALSDetNKZeroDimKThenExistVec:P1} holds for $k$, then~\ref*{lem:ALSDetNKZeroDimKThenExistVec:P2} holds for $k$.
\end{enumerate}

The corresponding diagram of implications obtained from~\ref*{lem:ALSDetNKZeroDimKThenExistVec:part1}, \ref*{lem:ALSDetNKZeroDimKThenExistVec:part2} and \ref*{lem:ALSDetNKZeroDimKThenExistVec:part3} could be represented as follows
\begin{center}
\begin{tikzcd}[arrows=Rightarrow]
\text{\ref*{lem:ALSDetNKZeroDimKThenExistVec:P1}(2) = \ref*{lem:ALSDetNKZeroDimKThenExistVec:part1}}\ar{d}{\text{\ref*{lem:ALSDetNKZeroDimKThenExistVec:part3}}} & \text{\ref*{lem:ALSDetNKZeroDimKThenExistVec:P1}(3)} \ar{d}{\text{\ref*{lem:ALSDetNKZeroDimKThenExistVec:part3}}} &\text{$\cdots$}  \\
\text{\ref*{lem:ALSDetNKZeroDimKThenExistVec:P2}(2)} \ar{ur}{\text{\ref*{lem:ALSDetNKZeroDimKThenExistVec:part2}}} & \text{\ref*{lem:ALSDetNKZeroDimKThenExistVec:P2}(3)} \ar{ur}{\text{\ref*{lem:ALSDetNKZeroDimKThenExistVec:part2}}} &\text{$\cdots$}
\end{tikzcd}
\end{center}
Thus, the principle of mathematical induction will imply the statement of the lemma for $k > 1$ if we prove the claims \ref*{lem:ALSDetNKZeroDimKThenExistVec:part1}, \ref*{lem:ALSDetNKZeroDimKThenExistVec:part2} and \ref*{lem:ALSDetNKZeroDimKThenExistVec:part3}.

\paragraph{\ref{lem:ALSDetNKZeroDimKThenExistVec:part1}} By contradiction. Let $n \ge k + 2 = 4$ and $\K \subseteq \M_{n\,k}(\mathbb F)$ be a linear variety with $\codim(\K) = k = 2$ such that $\det_{n\,2} (X) = 0$ for every $X \in \K$. Suppose that there exists a cobasis $B^*$ of  $\m(\K)$ and $1 \le j' \le 2$  such that  $B^* \cap \big([n]\times \{j'\}\big) = \varnothing$. Without loss of generality let us assume that $j' = 1$. Thus, 

\begin{equation}\label{lem:ALSDetNKZeroDimKThenExistVec:eq:1001}
B^* \cap \big([n]\times \{1\}\big) = \varnothing
\end{equation}

Assuming this, let us show that  $\{(1,2), (2,2)\}$ is a cobasis of $\m(\K)$. \linebreak If $B^* = \{(1,2), (2,2)\}$, then the claim follows immediately. Now consider the case if $B^* \neq \{(1,2), (2,2)\}$. Assume without loss of generality that $(1,2) \not\in B^*$. Then there exists $2 \le i' \le n$ such that $(i',2)\in B^*$. The equality~\eqref{lem:ALSDetNKZeroDimKThenExistVec:eq:1001} imply that
\begin{equation}\label{lem:ALSDetNKZeroDimKThenExistVec:eq:10001}
\left|B^* \cap \left(\{i'\}\times [2]\right)\right| = 1.
\end{equation}
Let $\K'$ be defined by
$$\K' = \pi_{\ins_{n\, 2}^{(i',1)}}\big(\K\cap U(x_{(1\,i)} = \delta_{i\,i'} \mid 1 \le i \le n)\big).$$
The equality~\eqref{lem:ALSDetNKZeroDimKThenExistVec:eq:1001} allows us to apply Lemma~\ref{lem:StrikingOutLifting} to $\K$, $i'$, $j' = 1$ and $\{c_i = \delta_{i\,i'} \mid 1 \le i \le n\}$. From this we conclude that $\K'$ is a linear variety,
\begin{equation}\label{lem:ALSDetNKZeroDimKThenExistVec:eq:1002}
\det_{(n-1)\,1}(X') = 0\;\;\mbox{for all}\;\;X' \in \K'
\end{equation}
and 
$$\codim(\K') \le \codim(\K) - \left|B^* \cap \left(\{i'\}\times [2]\right)\right| \overset{\eqref{lem:ALSDetNKZeroDimKThenExistVec:eq:10001}}{=\joinrel=\joinrel=}  \codim(\K) - 1 = 1.$$ 
Since $\det_{(n-1)\,1}$ is a nonzero function, then $\codim(\K') > 0$ and consequently 
\begin{equation}\label{lem:ALSDetNKZeroDimKThenExistVec:eq:1003}
\codim(\K') = 1.
\end{equation}

Let $A' \in \M_{1\, [n-1]\times \{1\}}(\F)$  be defined by $A'_{1\,(i,1)} = (-1)^{i-1}$ for all $1 \le i \le n-1$. Since $A'$ is nonzero,
\begin{equation}\label{lem:ALSDetNKZeroDimKThenExistVec:eq:1004}
\codim(\AS(A',\mathbf 0 )) = 1.
\end{equation}
In addition, the definition of Cullis' determinant (Definition~\eqref{def:CullisDet}) implies that 
$$\det_{(n-1)\, 1}(X) = \sum_{i = 1}^{n-1}(-1)^{i-1}x_{i\,1} =   A'X\;\;\mbox{for all}\;\;X = (x_{i\,j}) \in \M_{(n-1)\, 1},$$
where $X$ in $A'X$ is considered as a vector in $\F^{[n-1]\times \{1\}}$. Therefore, following~\eqref{lem:ALSDetNKZeroDimKThenExistVec:eq:1002}, we conclude that $\K' \subseteq \AS(A',\mathbf 0)$. Hence,
$\K' = \AS(A',\mathbf 0)$
by Lemma~\ref{lem:ASSubAS} because~\eqref{lem:ALSDetNKZeroDimKThenExistVec:eq:1003} and~\eqref{lem:ALSDetNKZeroDimKThenExistVec:eq:1004} imply that $\codim(\AS(A',\mathbf 0 )) = \codim(\K')$.

Since $A'_{1\,(1,1)} = 1 \neq 0$, then $\rk\Big(A'\big(\big|(1,1)\big]\Big) = 1$. Therefore, 
\begin{equation}\label{lem:ALSDetNKZeroDimKThenExistVec:eq:1005}
{B^*}' = \{(1,1)\}
\end{equation}
is a cobasis of $\K'$ by the definition of $\m[A']$ and Corollary~\ref{cor:MatMatDualToIndepMat}. By Lemma~\ref{lem:StrikingOutLifting}\ref{lem:StrikingOutLifting:part1} applied to $\K$, $i'$, $j' = 1$ and $\{c_i = \delta_{i\,i'} \mid 1 \le i \le n\}$,  the set
$${I^*} = \Big(B^* \cap \big(\{i'\} \times [k]\big)\Big) \cup \ins_{n\,2}^{(i',1)}({B^*}' )$$
 is a coindependent set of $\K$. Since
\[
\Big(B^* \cap \big(\{i'\} \times [k]\big)\Big) = \{(i',2)\}
\]
and
\[
\ins_{n\,2}^{(i',1)}({B^*}') \overset{\eqref{lem:ALSDetNKZeroDimKThenExistVec:eq:1005}}{=\joinrel=\joinrel=} \ins_{n\,2}^{(i',1)}(\{(1, 1)\}) = \{(1,2)\},
\]
then
\[
{I^*} = \{(i',2)\} \cup \{(1,2)\} = \{(1,2), (i',2)\}.
\]
Since $\codim(\K) = 2$, then ${I^*}$ is a cobasis of $\m(\K)$. If $i' = 2,$ then the claim is established. Otherwise, by setting ${B_{\mathrm{new}}^*} = I^*$, applying the arguments above to ${B_{\mathrm{new}}^*}$ and $i' = 1$ we obtain that ${B^C}^* = \{(1,2), (2,2)\}$ is a cobasis of $\K$. 

Thus, since ${B^C}^*$ is a cobasis of $\m(\K)$, then $B^C = \big([n]\times [2]\big)\setminus {B^C}^*$ is a basis of $\m(\K)$. Let $I^C \subseteq B^C, \mathbf c^C \in \F^{I^C}$ and $\K^C \subseteq \K$  be defined by
\begin{equation*}
I^C = \{(1,1), \ldots, (n,1), (5,2), \ldots, (n,2)\},
\end{equation*}
\begin{equation}\label{lem:ALSDetNKZeroDimKThenExistVec:eq1001}
\mathbf c^C_e =  
\begin{cases}
1, & e = (1,1)\\
1, & e = (2,1)\\
0, & e = (3,1)\\
-1, & e = (4,1)\\
0, & \mbox{otherwise}
\end{cases}\;\;\mbox{for all}\;\; e \in I^C
\end{equation} 
and 
\begin{equation*}
\K^C = \K \cap U(x_{e} = \mathbf c^C_e\;|\; e \in I^C).
\end{equation*} 
Since $I^C \subseteq B^C$, then $I^C$ is an independent set of $\K$.

Lemma~\ref{lem:LinVarSec} applied to $\K$, $I_C$ and $\mathbf c^C$ implies that 
$\K^C$ is a linear variety and 
\begin{equation}\label{lem:ALSDetNKZeroDimKThenExistVec:eq101}
\m(\K^C)\setminus I^C = \m(\K)/I^C.
\end{equation}

Let $I^{(3,2)} = \{(3,2)\} \cup I^C$. Since $I^{(3,2)} \subseteq B^C$, it is an independent set of $\m(\K)$. Hence, $\{(3,2)\} = I^{(3,2)} \setminus I^C$ is an independent set of $\m(\K)/I^C$ by Corollary~\ref{cor:ContractionIndependent}. The equality~\eqref{lem:ALSDetNKZeroDimKThenExistVec:eq101} implies that $\{(3,2)\}$ is an independent set of $\m(\K^C)\setminus I^C$. By the definition of restriction matroid (Definition~\ref{def:RestrictionMatroid}), $\{(3,2)\}$ is an independent set of $\m(\K^C)$. From this and Corollary~\ref{cor:IndepEveryVal} applied to $\K^C$, $I = \{(3,2)\}$ and $\mathbf c = (1) \in \F^{\{(3,2)\}}$ we conclude that there exists $X^C = (x^C_{i\,j}) \in \K^C$ such that 
\begin{equation}\label{lem:ALSDetNKZeroDimKThenExistVec:eq102}
x^C_{3\,2} = 1.
\end{equation}
Considering $\det_{n\,2} (X^C)$, we have on the one hand that 
\begin{equation}\label{lem:ALSDetNKZeroDimKThenExistVec:eq1002}
\det_{n\,2} (X^C) = 0
\end{equation}
because  $\K^C \subseteq \K$ and  $\det_{n\,2} (X) = 0$ for all $X \in \K^C$ by the initial assumption on $\K$. 

On the other hand, if we express $\det_{n\,2}(X^C)$ in terms of $x^C_{i\,j}$ using  Laplace expansion along the first column, by Lemma~\ref{lem:DetNKLaplaceExp}  we obtain that
\begin{multline}\label{lem:ALSDetNKZeroDimKThenExistVec:eq1003}
\det_{n\,2}(X^C) = \sum_{i = 1}^n (-1)^{i+1} x^C_{i\, 1} \det_{(n-1)\, 1}\left(X^C(i|1)\right)\\
  \overset{\eqref{lem:ALSDetNKZeroDimKThenExistVec:eq1001}}{=\joinrel=\joinrel=} 1\cdot \det_{(n-1)\,1}\left(X^C(1|1)\right) - 1\cdot \det_{(n-1)\,1}\left(X^C(2|1)\right) + 0 \cdot \det_{(n-1)\,1}\left(X^C(3|1)\right)\\
\phantom{XX}  - (-1)\cdot \det_{(n-1)\,1}\left(X^C(4|1)\right) + \sum_{i = 5}^n (-1)^{i+1} 0\cdot \det_{(n-1)\,1}\left(X^C(i|j)\right)\\
\phantom{XXXXXX} = (x^C_{2\,2} - x^C_{3\,2} + x^C_{4\,2} + 0 + \ldots + 0) - (x^C_{1\,2} - x^C_{3\,2} + x^C_{4\,2} + 0 + \ldots + 0)\\  + (x^C_{1\,2} - x^C_{2\,2} + x^C_{3\,2} + 0 + \ldots + 0)
 = x^C_{3\,2}.
\end{multline}
Aligning~\eqref{lem:ALSDetNKZeroDimKThenExistVec:eq1003} and~\eqref{lem:ALSDetNKZeroDimKThenExistVec:eq102} together yields $\det_{n\,2}(X^C) = 1$. This contradicts with~\eqref{lem:ALSDetNKZeroDimKThenExistVec:eq1002}. Thus, this part of the lemma is established.

\paragraph{\ref{lem:ALSDetNKZeroDimKThenExistVec:part2}}
By contradiction. Assume that $k \ge 3$ and~\ref{lem:ALSDetNKZeroDimKThenExistVec:P2} holds for $k-1$. Suppose that a linear variety $\K \subseteq \M_{n\,k}(\mathbb F)$, a cobasis $B^*$ of $\m(\K)$ and  $1 \le j' \le k$ are such that $\codim(\K) = k$ $\det_{n\,k} (X) = 0$ for all $X \in \K$ and $B^* \cap \big([n]\times \{j'\}\big) = \varnothing$. 

Since $\left|B^*\right| = k > 0$, there exists $1 \le i' \le n$ such that $\Big|B^* \cap \big(\{i'\} \times [k]\big)\Big| > 0$. Lemma~\ref{lem:ALSDetNKZeroCapLE1} implies that $\Big|B^* \cap \big(\{i'\} \times [k]\big)\Big| \le 1$. Therefore, 
\begin{equation}\label{lem:ALSDetNKZeroDimKThenExistVec:eq1103}
\Big|B^* \cap \big(\{i'\} \times [k]\big)\Big| = 1.
\end{equation}
 Let $\K' = \pi_{\ins_{n\,k}^{(i',j')}}\Big(\K \cap U\big((i,j') = \delta_{i\,i'}\mid 1 \le i \le n\big)\Big)$. From  Lemma~\ref{lem:StrikingOutLifting} applied to $i'$, $j'$, $\K$, $B^*$  and $\{c_i = \delta_{i\,i'}\mid 1 \le i \le n\}$ we conclude that $\K'$ is a linear variety, 
 \begin{equation}\label{lem:ALSDetNKZeroDimKThenExistVec:eq1105}
 \det_{(n-1)\,(k-1)}(X') = 0\;\;\mbox{for all}\;\;X' \in \K'
 \end{equation}
 and
 \begin{equation}\label{lem:ALSDetNKZeroDimKThenExistVec:eq1104}
 \codim(\K') \le\codim(\K) - \Big|B^* \cap \big(\{i'\} \times [k]\big)\Big|  \overset{\eqref{lem:ALSDetNKZeroDimKThenExistVec:eq1103}}{=\joinrel=\joinrel=} \codim(\K) - 1 = k - 1.
\end{equation}
The condition~\eqref{lem:ALSDetNKZeroDimKThenExistVec:eq1105} allows us to apply Theorem~\ref{thm:ALSDetNKZeroCodimGEK} to $\K'$. It follows that
\begin{equation}\label{lem:ALSDetNKZeroDimKThenExistVec:eq1106}
\codim(\K') \ge k - 1.
\end{equation}
Thus, 
\[
k - 1 \overset{\eqref{lem:ALSDetNKZeroDimKThenExistVec:eq1106}}{\le} \codim(\K') \overset{\eqref{lem:ALSDetNKZeroDimKThenExistVec:eq1104}}{\le} k - 1
\]
and consequently 
\begin{equation}\label{lem:ALSDetNKZeroDimKThenExistVec:eq1107}
\codim(\K') = k - 1.
\end{equation}
Since we assume that~\ref*{lem:ALSDetNKZeroDimKThenExistVec:P2} holds for $k-1$, applying~\ref*{lem:ALSDetNKZeroDimKThenExistVec:P2} to $\K'$ yields that there is $0 \neq \mathbf z' \in \F^{n-1}$ such that
\begin{equation}\label{lem:ALSDetNKZeroDimKThenExistVec:eq1}
\K' = \{X' \in \M_{n\,k}(\F) \mid \mathbf {z'}^t X' = 0\}.
\end{equation}
Let us assume without loss of generality that there is $1 \le i_0 \le n-1$ such that
\begin{equation}\label{lem:ALSDetNKZeroDimKThenExistVec:eq1101}
\mathbf z'_{i_0} = 1.
\end{equation}

Let  $A' \in \M_{(k-1)\, [n-1]\times[k-1]}(\F)$ be defined by
\begin{equation}\label{lem:ALSDetNKZeroDimKThenExistVec:eq1102}
A'_{p\,(i,j)} =
\begin{cases}
\mathbf z'_{i}, & j = p\\
0, & \mbox{otherwise}
\end{cases}
\end{equation}
for all $1 \le p \le k-1$ and $(i,j) \in [n-1]\times[k-1]$. Then  $A'\big(\big|(i_0,1),\ldots, (i_0,k-1)\big]$ is a permutation matrix by~\eqref{lem:ALSDetNKZeroDimKThenExistVec:eq1101} and~\eqref{lem:ALSDetNKZeroDimKThenExistVec:eq1102} and consequently
\begin{equation}\label{lem:ALSDetNKZeroDimKThenExistVec:eq1109}
\rk\bigg(A'\big(\big|(i_0,1),\ldots, (i_0,k-1)\big]\bigg) = k - 1.
\end{equation}
Since $A'$ has $k-1$ rows, from~\eqref{lem:ALSDetNKZeroDimKThenExistVec:eq1109} we obtain that
\begin{equation}\label{lem:ALSDetNKZeroDimKThenExistVec:eq1108}
\rk(A') = k-1.
\end{equation}
Let ${B'}^* \subseteq \left([n-1]\times[k-1]\right)$ be defined by
\begin{equation}\label{lem:ALSDetNKZeroDimKThenExistVec:eq2}
{B'}^* = \{(i_0,1),\ldots, (i_0,k-1)\}.
\end{equation}
Using the definition of vector matroid $\m[A']$ (Definition~\ref{def:VecMat}), we conclude from equalities~\eqref{lem:ALSDetNKZeroDimKThenExistVec:eq1109} and~\eqref{lem:ALSDetNKZeroDimKThenExistVec:eq1108} that
\begin{equation}\label{lem:ALSDetNKZeroDimKThenExistVec:eq1110}
{B'}^*\in \mathcal B(\m[A']).
\end{equation}

The equality~\eqref{lem:ALSDetNKZeroDimKThenExistVec:eq1} means that $\K' \subseteq \AS(A',0)$. Hence, $\K' = \AS(A',0)$ by Lemma~\ref{lem:ASSubAS} because 
$$\codim(\K') \overset{\eqref{lem:ALSDetNKZeroDimKThenExistVec:eq1107}}{=\joinrel=} k-1 \overset{\eqref{lem:ALSDetNKZeroDimKThenExistVec:eq1108}}{=\joinrel=} \rk(A') \overset{\mbox{\scriptsize Lemma~\ref{lem:MatRankEqToCodim}}}{=\joinrel=\joinrel=\joinrel=\joinrel=\joinrel=\joinrel=} \codim(\AS(A',0)).$$ Therefore, ${B'}^*$ is a cobasis of $\m(\K')$ by Lemma~\ref{lem:IdepMatroidCorr} and~\eqref{lem:ALSDetNKZeroDimKThenExistVec:eq1110}.

 By Lemma~\ref{lem:StrikingOutLifting}\ref{lem:StrikingOutLifting:part1}, $I^* = \Big(B^* \cap \big(\{i'\} \times [k]\big)\Big) \cup \ins_{n\,k}^{(i',j')}({B'}^*)$ is a coindependent set of $\m(\K)$. Hence, $I^*$ is a cobasis of $\K$ because
\begin{multline*}
\bigg|\Big(B^* \cap \big(\{i'\} \times [k]\big)\Big) \cup \ins_{n\,k}^{(i',j')}({B'}^*)\bigg|\\
 = \Big|B^* \cap \big(\{i'\} \times [k]\big)\Big| + \Big|\ins_{n\,k}^{(i',j')}({B'}^*)\Big| \overset{\eqref{lem:ALSDetNKZeroDimKThenExistVec:eq1103}}{=\joinrel=\joinrel=} 1 + \Big|\ins_{n\,k}^{(i',j')}({B'}^*)\Big|\\
 \overset{\eqref{lem:ALSDetNKZeroDimKThenExistVec:eq2}}{=\joinrel=\joinrel=\joinrel=}
1 + (k-1) = k = \codim(\K).
\end{multline*}
In addition, $I^*$ contains $k-1 \ge 3 - 1 =  2$ elements of the set
$\ins_{n\,k}^{(i',j')}({B'}^*)$. The equality~\eqref{lem:ALSDetNKZeroDimKThenExistVec:eq2} implies that all elements of $\ins_{n\,k}^{(i',j')}({B'}^*)$ belong to the same row by Lemma~\ref{lem:InsPresRowCol}. This leads to a contradiction because by Lemma~\ref{lem:ALSDetNKZeroCapLE1} every cobasis of $\K$ has no more than one element in every row of $[n]\times [k]$.

\paragraph{\ref{lem:ALSDetNKZeroDimKThenExistVec:part3}}
Let $\K \subseteq \M_{n\,k}(\mathbb F)$ be a linear variety with $\codim(\K) = k$ such that $\det_{n\,k} (X) = 0$ for all $X \in \K$, $B^*$ be a cobasis of $\m(\K)$. Since~\ref*{lem:ALSDetNKZeroDimKThenExistVec:P1} holds for $k$ and $\left|B^*\right| = \codim(\K) = k$, then $B^*$ has exactly one element in each column of $[n]\times [k]$. 

By Lemma~\ref{lem:ForLinVarExistsEqRep}, there exist $m \ge 1$, $A \in \M_{m\,\left([n]\times [k]\right)}$ and $\mathbf b \in \F^{m}$ such that $\K = \AS(A,\mathbf b)$. In addition, Lemma~\ref{lem:ReducedERExists} allows us to assume that is $A$ reduced with respect to $B^*$ and consequently $m = k$. For every $1 \le p \le k$ let us denote by $a_p$ a unique element of $B^*$ such that 
\begin{equation}\label{lem:ALSDetNKZeroDimKThenExistVec:eq3}
A_{p\,a_p} = 1.
\end{equation}
Since the permutation of rows of system of linear equations does not affect on the set of its solutions, we also assume that $a_p$ belongs to $[n]\times \{p\}$ for every $1 \le p \le k$.

Let us show that 
\begin{equation}\label{lem:ALSDetNKZeroDimKThenExistVec:eq203}
A_{p\,a} = 0\;\;\mbox{for all}\;\;a \in \big([n]\times [k]\big) \setminus \big([n]\times \{p\}\big).
\end{equation}
Indeed, let $a \in \big([n]\times [k]\big)$ be such that $A_{p\,a} \neq 0$. Then ${B'}^* = B^* \triangle \{a, a_p\}$ is a cobasis of $\m(\K)$ by Lemma~\ref{lem:ALSReducedBNonzeroExchange}. Therefore, 
\begin{equation}\label{lem:ALSDetNKZeroDimKThenExistVec:eq201}
{B'}^* \cap \big([n] \times \{p\}\big) \neq \varnothing
\end{equation}
by \ref*{lem:ALSDetNKZeroDimKThenExistVec:P1} for $k$ applied to $\K$ and ${B'}^*$. In addition,
\begin{multline}\label{lem:ALSDetNKZeroDimKThenExistVec:eq202}
{B'}^* \cap \big([n] \times \{p\}\big) = \big(B^* \triangle \{a, a_p\}\big) \cap \big([n] \times \{p\}\big)\\
 = \Big(B^*  \cap \big([n] \times \{p\}\big) \Big) \triangle \left(\{a, a_p\} \cap \Big([n] \times \{p\}\Big) \right)\\
  = \Big(\{a_p\} \cap \big([n] \times \{p\}\big) \Big) \triangle  \Big(\{a, a_p\} \cap \big([n] \times \{p\}\big) \Big)\\
    = \big(\{a_p\} \triangle \{a, a_p\}\big) \cap \big([n] \times \{p\}\big)  = \{a\} \cap \big([n] \times \{p\}\big).
\end{multline}
By aligning~\eqref{lem:ALSDetNKZeroDimKThenExistVec:eq201} and~\eqref{lem:ALSDetNKZeroDimKThenExistVec:eq202} together we obtain that $\{a\} \cap \big([n] \times \{p\}\big) \neq \varnothing$, or, equivalently, $a \in \big([n] \times \{p\}\big)$. This establishes~\eqref{lem:ALSDetNKZeroDimKThenExistVec:eq203} which allows us to conclude that $A$ has the form
\[
\begin{pmatrix}
A_{1\, (1,1)} & \cdots & A_{1\, (n,1)} & 0 & \cdots & 0 & \cdots & 0              & \cdots & 0 \\
0       & \cdots & 0        & A_{2\, (1,2)} & \cdots & A_{2\, (n,2)} & \cdots      & 0              & \cdots & 0\\
\vdots  & \ddots & \vdots   & \vdots        & \ddots & \vdots        & \ddots      & \vdots         & \ddots & \vdots\\
0       & \cdots & 0        & 0             & \cdots & 0             & \cdots      &  A_{k\, (1,k)} & \cdots & A_{k\, (n,k)}
\end{pmatrix}
\]

Let $\mathbf w^{(1)}, \ldots \mathbf w^{(p)} \in \F^{n}$ be defined by
\[
\mathbf w^{(p)} = (A_{p\, (1,p)}, \ldots, A_{p\, (n,p)}).
\]
All these vectors are nonzero because 
\begin{equation}\label{lem:ALSDetNKZeroDimKThenExistVec:eq4}
\mathbf w^{(p)}_{a_p} = 1\;\mbox{for all}\;1 \le p \le k
\end{equation} by the equality~\eqref{lem:ALSDetNKZeroDimKThenExistVec:eq3}. 

Let us prove that $\mathbf w^{(1)}, \ldots, \mathbf w^{(k)}$ are scalar multiples of each other by contradiction. Suppose  that there is  $1 \le p_1 < p_2 \le k$ such that  $\mathbf w^{(p_2)}$ is not a scalar multiple of $\mathbf w^{(p_1)}$. Consider a linear map $T \colon \M_{n\,k}(\F) \to \M_{n\,k}(\F)$ sending $X \in \M_{n\,k}(\F)$ to $X$ with $p_1$-th column subtracted from its $p_2$-th column. This map is invertible and preserves $\det_{n\,k}$ by the property~\ref{thm:DetNKBasProp:mark1} in Theorem~\ref{thm:DetNKBasProp}. Let $D \in \M_{[n]\times [k]\,[n]\times [k]}(\F)$  be a matrix representing $T$ in the standard basis of $\M_{n\, k}(\F)$ identified with $\F^{[n]\times [k]}$. Then by direct calculations we obtain from the definition of $T$ that
\[D^{-1} = I_{[n]\times [k]} + \sum_{i=1}^n E_{(i,p_2)\,(i,p_1)}.\]

Thus, $\K_T = T(\K)$ is a linear variety of codimension $k$ such that $\det_{n\,k}(X) = 0$ for all $X \in \K_T$. In addition, $\K = \AS(AD^{-1}, \mathbf b)$ by Lemma~\ref{lem:InvertibleLDAS}.

The matrix $AD^{-1}$ could be represented as follows
\[
\begin{pmatrix}
A_{p_1\, (1,1)} & \cdots & A_{p_1\, (n,1)} & 0 & \cdots & 0 & \cdots & 0              & \cdots & 0 \\
A_{p_2\, (1,1)} & \cdots & A_{p_2\, (n,1)}  & A_{p_2\, (1,2)} & \cdots & A_{p_2\, (n,2)} & \cdots      & 0              & \cdots & 0\\
\vdots  & \ddots & \vdots   & \vdots        & \ddots & \vdots        & \ddots      & \vdots         & \ddots & \vdots\\
0       & \cdots & 0        & 0             & \cdots & 0             & \cdots      &  A_{k\, (1,2)} & \cdots & A_{k\, (n,2)}
\end{pmatrix},
\]
where we put $p_1$-th and $p_2$-th row of $AD^{-1}$ first for convenience. If we subtract the $p_1$-th row of $AD^{-1}$ with the coefficient $\alpha = \frac{\mathbf w^{(p_2)}_{a_{p_2}}}{\mathbf w^{(p_1)}_{a_{p_1}}}$  from its $p_2$-th row, we obtain the matrix $A'$ which has the form
\[
\begin{psmallmatrix}
A_{p_1\, (1, p_1)} & \cdots & A_{p_1\,a_{p_1}} & \cdots & A_{p_1\, (n, p_1)} & 0 & \cdots & 0 & \cdots & 0 & \cdots & 0 \\
A_{p_2\, (1, p_2)} - \alpha A_{p_1\, (1,p_1)} & \cdots & 0 & \cdots& A_{p_2\, (n, p_2)} - \alpha A_{p_1\, (n, p_1)} & A_{p_2\, (1,p_2)} & \cdots & A_{p_2\, (n,p_2)} & \cdots      & 0              & \cdots & 0\\
\vdots  & \ddots & \vdots & \ddots & \vdots   & \vdots        & \ddots & \vdots        & \ddots      & \vdots         & \ddots & \vdots\\
0       & \cdots & 0 & \cdots       & 0             &  0             & \cdots     & 0 & \cdots &  A_{k\, (1,k)} & \cdots & A_{k\, (n,k)}
\end{psmallmatrix}.
\]
Therefore, $A'$ is reduced with respect to $B^* = \{a_{p_1}, \ldots, a_{p_k}\}$. Hence, $B^*$ is a cobasis of $\m(\K_T)$.

Since $\mathbf w^{(p_2)}$ is not scalar multiple of $\mathbf w^{(p_1)}$, $A'_{p_2\, a}$ is nonzero for some $a \in [n]\times \{p_1\}$. Therefore, Lemma~\ref{lem:ALSReducedBNonzeroExchange} implies that $B^* \triangle \{a_{p_2}, a\}$ is a cobasis of $\m(\K_T)$ which does not contain an element belonging to $[n]\times \{p_2\}$. This lead us to a contradiction because we assume that~\ref*{lem:ALSDetNKZeroDimKThenExistVec:P1} holds for $k$.

Thus, $\mathbf w^{(1)}, \ldots, \mathbf w^{(k)}$ are proportional to each other. In addition, all $\mathbf w^{(p)}$ are nonzero by~\eqref{lem:ALSDetNKZeroDimKThenExistVec:eq4}. Hence, there exists $\mathbf 0 \neq \mathbf z\in \F^n$ such that $\mathbf w^{(p)} = \alpha_p \mathbf z$ for some $0 \neq \alpha_1, \ldots, \alpha_p \in \F$ and all $1 \le p \le k$. Since the multiplication of the row of system of linear equations by a nonzero scalar does not change the set of its solutions,
\begin{equation}\label{lem:ALSDetNKZeroDimKThenExistVec:eq605}
\K = \AS(A', \mathbf b')
\end{equation}
for $A' = \diag(\alpha^{-1}_1, \ldots, \alpha^{-1}_k) A$ and $\mathbf b' = \diag(\alpha^{-1}_1, \ldots, \alpha^{-1}_k)\mathbf b$,
which is equivalent to
\begin{equation}\label{lem:ALSDetNKZeroDimKThenExistVec:eq5}
\K = \left\{X \in \M_{n\,k}(\F)\,\mid\,\mathbf z^t X = \mathbf {b'}\right\}.
\end{equation}

Let us prove that $\mathbf b' = \mathbf 0$ by contradiction. Suppose that  $\mathbf b' \neq \mathbf 0$ and assume without loss of generality that 
\begin{equation}\label{lem:ALSDetNKZeroDimKThenExistVec:eq601}
\mathbf b'_1 \neq 0.
\end{equation}

Let $1 \le i_0 \le n$ be such that 
\begin{equation}\label{lem:ALSDetNKZeroDimKThenExistVec:eq604}
\mathbf z_{i_0} \neq 0.
\end{equation} 
From the definition of vector matroid (Definition~\ref{def:VecMat}) $\m[A']$ we conclude by~\eqref{lem:ALSDetNKZeroDimKThenExistVec:eq604} that the set $B^* = \{i_0\} \times [k]$ is a basis of $\m[A']$. Hence, $B = \big([n]\times [k]\big) \setminus \big(\{i_0\} \times [k]\big)$ is a basis of $\m(\K)$ by Lemma~\ref{lem:IdepMatroidCorr}\ref{lem:IdepMatroidCorr:part2} and~\eqref{lem:ALSDetNKZeroDimKThenExistVec:eq605}. Therefore, by Corollary~\ref{cor:IndepEveryVal}, there is $X' = (x'_{i\,j})\in \K$ such that 
\begin{equation}\label{lem:ALSDetNKZeroDimKThenExistVec:eq6}
x'_{i\,1} = 0\;\mbox{for all}\; 1 \le i \neq i_0 \le n
\end{equation}
and
\begin{equation}\label{lem:ALSDetNKZeroDimKThenExistVec:eq602}
\det_{(n-1)\,(k-1)}\Bigl(X'(i_0|1)\Bigr) \neq 0
\end{equation}
 because the matrix $X'(i_0|1)$ could be arbitrary and $\det_{(n-1)\, (k-1)}$ is clearly a nonzero function. In addition,
$$
\mathbf z_{i_{0}} x'_{i_0\,1} \overset{\eqref{lem:ALSDetNKZeroDimKThenExistVec:eq6}}{=\joinrel=\joinrel=}  \sum_{i = 1}^{n}\mathbf z_i x'_{i\,1} \overset{\eqref{lem:ALSDetNKZeroDimKThenExistVec:eq5}}{=\joinrel=\joinrel=} \mathbf b'_1 \overset{\eqref{lem:ALSDetNKZeroDimKThenExistVec:eq601}}{=\joinrel\neq\joinrel=} 0$$
and consequently, by~\eqref{lem:ALSDetNKZeroDimKThenExistVec:eq604} we have
\begin{equation}\label{lem:ALSDetNKZeroDimKThenExistVec:eq7}
x'_{i_0\,1} \neq 0.
\end{equation} 

Let us consider $\det_{n\,k}(X')$. On the one hand, since $X' \in \K$, then from the initial assumption on $\K$ we obtain that
 \begin{equation}\label{lem:ALSDetNKZeroDimKThenExistVec:eq603}
\det_{n\,k}(X') = 0.
\end{equation}

On the other hand, using the Laplace expansion of $\det_{n\,k}(X')$ along the first row\linebreak (Lemma~\ref{lem:DetNKLaplaceExp}), we obtain
\begin{multline*}
\det_{n\,k}(X') = \sum_{i = 1}^{n} (-1)^{i + 1}x'_{i\,1}\cdot \det_{(n-1)\,(k-1)} \Bigl(X'(i|1)\Bigr) \\ \overset{\eqref{lem:ALSDetNKZeroDimKThenExistVec:eq6}}{=\joinrel=\joinrel=} x'_{i_0\, 1}\cdot \det_{(n-1)\,(k-1)} \Bigl(X'(i|1)\Bigr) \overset{\eqref{lem:ALSDetNKZeroDimKThenExistVec:eq7}\;\mbox{\scriptsize and}\;\eqref{lem:ALSDetNKZeroDimKThenExistVec:eq602}}{=\joinrel=\joinrel=\joinrel\neq\joinrel=\joinrel=\joinrel=} 0
\end{multline*}
which contradicts with~\eqref{lem:ALSDetNKZeroDimKThenExistVec:eq603}. Therefore, $\mathbf b' = \mathbf 0$.
\end{proof}

\begin{remark}If $k > 1$, then the statement of the lemma is not true for $n = k$ and $n = k + 1$. Indeed, if $n = k$, then $\det_{n\,k}(X) = \det_{k} (X) = 0$ for all $X \in \K'$, where
\[
\K' = \{\begin{pmatrix}
0 & x_{1,1} & \cdots & x_{1\,n-1}\\
\vdots & \vdots & \ddots & \vdots\\
0 & x_{n,1} & \cdots & x_{n\,n-1}
\end{pmatrix} \mid x_{1\,1},\ldots, x_{n\,n-1} \in \F\}.
\]
If $n=k+1$, then $\det_{n\,k}(X) = \det_{k} (X) = 0$ for all $X \in \K''$, where
 \[
\K'' = \{\begin{pmatrix}
x & x_{1,1} & \cdots & x_{1\,n-2}\\
\vdots & \vdots & \ddots & \vdots\\
x & x_{n,1} & \cdots & x_{n\,n-2}
\end{pmatrix} \mid x, x_{1\,1},\ldots, x_{n\,n-2} \in \F\}
\]
which follows from multilinearity of $\det_{n\,k}$ with respect to columns and\linebreak Lemma~\ref{lem:NAKAGAMILEMMA20} because $n + k - 1$ is even in this case.
\end{remark}

\begin{lemma}\label{thm:RowVarDetNKZeroExpress}
Let $\mathbf z \in \F^n$ be such that $\mathbf z_1 = -1$ and $\K \subseteq \M_{n\,k}(\mathbb F)$ be a linear variety defined by $\K = \left\{X \in \M_{n\,k}(\F) \mid \mathbf z^t X = \mathbf 0\right\}$. Then 
\begin{equation}\label{thm:RowVarDetNKZeroExpress:eqq1}
\det_{n\,k}(X) = \sum_{\substack{c \in \binom{[n]}{k}\\1 \not\in c}} \left(1 + \sum_{\alpha=1}^{k}\mathbf z_{c(\alpha)} (-1)^{\alpha - c(\alpha)}\right)\sgn_{[n]}(c)\det_k\Bigl(X[c|)\Bigr)
\end{equation}
\end{lemma}
\begin{proof}Since $\mathbf z_1 = -1,$ the condition $\mathbf z^t X = 0$ for all $X \in \K$ means that the first row of every $X \in \K$ could be expressed as a linear combination of other rows as follows
\begin{equation}\label{thm:RowVarDetNKZeroCond:eq2}
X[1|) = \mathbf z_2 X[2|) + \ldots + \mathbf z_n X[n|).
\end{equation}

Let $X \in \K$. By Lemma~\ref{cor:CullisAsSumDet}
$$
\det_{n\, k} (X) = \sum_{c \in \binom{[n]}{k}} \sgn_{[n]}(c) \det_k\Bigl(X[c|)\Bigr).
$$
The right-hand side of this equality is divided into two summands as follows.
\begin{equation}\label{thm:RowVarDetNKZeroCond:eq3}\sum_{c \in \binom{[n]}{k}} \sgn_{[n]}(c) \det_k\Bigl(X[c|)\Bigr)\\
= \sum_{\substack{c \in \binom{[n]}{k}\\1 \in c}} \sgn_{[n]}(c) \det_k\Bigl(X[c|)\Bigr) + \sum_{\substack{c \in \binom{[n]}{k}\\1 \not\in c}} \sgn_{[n]}(c) \det_k\Bigl(X[c|)\Bigr).
\end{equation}

Consider the first summand of the right-hand side of~\eqref{thm:RowVarDetNKZeroCond:eq3}. Let $c' \in \binom{[n]}{k}$ be such that $1 \in c'$. It follows from~\eqref{thm:RowVarDetNKZeroCond:eq2} that 
\begin{equation}\label{thm:RowVarDetNKZeroCond:eq4}
 \det_k(X[c'|)) =
 \begin{vmatrix}
 X[1|)\\
 X[c'(2)|)\\
 \vdots\\
 X[c'(k)|)\\
 \end{vmatrix}
  = \begin{vmatrix}
 \mathbf z_2 X[2|) + \ldots + \mathbf z_n X[n|)\\
 X[c'(2)|)\\
 \vdots\\
 X[c'(k)|)\\
 \end{vmatrix}\\
  = \sum_{2 \le i \le n}\mathbf z_i\begin{vmatrix}
  X[i|)\\
 X[c'(2)|)\\
 \vdots\\
 X[c'(k)|)\\
 \end{vmatrix}
\end{equation}
Note that
\begin{equation}\label{thm:RowVarDetNKZeroCond:eq5}
\begin{vmatrix}
  X[i|)\\
 X[c'(2)|)\\
 \vdots\\
 X[c'(k)|)\\
 \end{vmatrix} = 0\;\;\mbox{for all}\;\;i \in c',\, 2 \le i \le n 
\end{equation}
because it becomes a determinant of a matrix with two rows equal to each other.

Since the ordinary determinant is an antisymmetric function of the rows of matrix, for $i \not\in c'$, $2 \le i \le n$  we have
\begin{equation}\label{thm:RowVarDetNKZeroCond:eq6}
\begin{vmatrix}
  X[i|)\\
 X[c'(2)|)\\
 \vdots\\
 X[c'(k)|)\\
 \end{vmatrix} = \sgn_{n\,k}(\sigma_{i\,c'})\det_k\Bigl(X[c'\triangle \{1,i\}|)\Bigr),
\end{equation}
where $\sigma_{i\, c'} \in \mathcal C_{[n]}^k$ is defined by
\[
\sigma_{i\,c'}(k) =
\begin{cases}
i, & k = 1\\
c'(k), & \mbox{otherwise}
\end{cases}.
\]

By substituting~\eqref{thm:RowVarDetNKZeroCond:eq5} and \eqref{thm:RowVarDetNKZeroCond:eq6} into~\eqref{thm:RowVarDetNKZeroCond:eq4} we obtain that
\begin{equation*}
\sum_{\substack{c' \in \binom{[n]}{k}\\1 \in c'}} \sgn_{[n]}(c') \det_k\Bigl(X[c'|)\Bigr)
= \sum_{\substack{c' \in \binom{[n]}{k}\\1 \in c'}} \sgn_{[n]}(c') \sum_{\substack{2 \le i \le n\\i \not\in c'}} \mathbf z_i\sgn_{n\,k}(\sigma_{i\,c'})\det_k\Bigl(X[c'\triangle \{1,i\}|)\Bigr).
\end{equation*}

Let us rearrange the sum in the right-hand side of this equality. Note that if $2 \le i \le n$ and $c'\triangle \{1,i\} = c$, then $i = c(\alpha)$ for some unique $1 \le \alpha \le k$. Hence, $\sgn_{[n]}(c') = (-1)^{1 - i}\sgn_{[n]}(c) = (-1)^{1 - c(\alpha)}\sgn_{[n]}(c)$ by the definition of $\sgn_{[n]}$. In addition, $\sgn_{n\,k}(\sigma_{i\,c'}) = (-1)^{\alpha - 1}$ because it is equal to the sign of permutation
\[
\begin{pmatrix}
c(1)        & c(2) & \cdots & c(\alpha) & c(\alpha+1) & \cdots & c(k)\\
c(\alpha)   & c(1) & \cdots & c(\alpha-1) & c(\alpha+1) & \cdots & c(k)
\end{pmatrix}
\]
by the definition. 

Observe also that for every pair $(c, \alpha)$ such that $c \in \binom{[n]}{k}, 1 \not\in c$ and $1 \le \alpha \le k$ there exist a unique pair $(c', i)$, where $c' \in \binom{[n]}{k}$ and $2 \le i \le n$ such that $i = c(\alpha)$, $c' = c \triangle \{1,i\}$ and consequently $1 \in c'$ and  $i \not\in c'$. 

Thus,
\begin{multline*}
\sum_{\substack{c' \in \binom{[n]}{k}\\1 \in c'}} \sgn_{[n]}(c') \sum_{\substack{2 \le i \le n\\i \not\in c'}} \mathbf z_i\sgn_{n\,k}(\sigma_{i\,c'})\det_k\Bigl(X[c'\triangle \{1,i\}|)\Bigr)\\
= \sum_{\substack{c' \in \binom{[n]}{k}, 2 \le i \le n\\1 \in c', i \not\in c'}} \mathbf z_i \sgn_{[n]}(c') \sgn_{n\,k}(\sigma_{i\,c'})\det_k\Bigl(X[c'\triangle \{1,i\}|)\Bigr)\\
= \sum_{\substack{c \in \binom{[n]}{k}, 1 \le \alpha \le k\\1 \not\in c}} \mathbf z_{c(\alpha)} (-1)^{1 - c(\alpha)}\sgn_{[n]}(c)(-1)^{\alpha - 1}\det_k\Bigl(X[c|)\Bigr)\\
= \sum_{\substack{c \in \binom{[n]}{k}\\1 \not\in c}} \left(\sum_{\alpha=1}^{k}\mathbf z_{c(\alpha)} (-1)^{\alpha - c(\alpha)}\right)\sgn_{[n]}(c)\det_k\Bigl(X[c|)\Bigr)
\end{multline*}

By substituting this instead of the first summand into~\eqref{thm:RowVarDetNKZeroCond:eq3} we obtain that
\begin{multline}\label{thm:MaxDimKEvenAltSumZero:eq2}
\det_{n\,k}(X) = \sum_{c \in \binom{[n]}{k}} \sgn_{[n]}(c) \det_k\Bigl(X[c|)\Bigr)\\
= \sum_{\substack{c \in \binom{[n]}{k}\\1 \not\in c}} \left(\sum_{\alpha=1}^{k}\mathbf z_{c(\alpha)} (-1)^{\alpha - c(\alpha)}\right)\sgn_{[n]}(c)\det_k\Bigl(X[c|)\Bigr) + \sum_{\substack{c \in \binom{[n]}{k}\\1 \not\in c}} \sgn_{[n]}(c) \det_k\Bigl(X[c|)\Bigr)\\
= \sum_{\substack{c \in \binom{[n]}{k}\\1 \not\in c}} \left(1 + \sum_{\alpha=1}^{k}\mathbf z_{c(\alpha)} (-1)^{\alpha - c(\alpha)}\right)\sgn_{[n]}(c)\det_k\Bigl(X[c|)\Bigr)
\end{multline}
for all $X \in \K$ which establishes the equality~\eqref{thm:RowVarDetNKZeroExpress:eqq1}
\end{proof}

\begin{lemma}\label{thm:RowVarDetNKZeroCond}
Let $\mathbf z \in \F^n$ be such that $\mathbf z_1 = -1$ and $\K \subseteq \M_{n\,k}(\mathbb F)$ be a linear variety defined by $\K = \left\{X \in \M_{n\,k}(\F) \mid \mathbf z^t X = \mathbf 0\right\}$. Then 
\begin{equation}\label{thm:RowVarDetNKZeroCond:eqq2}
\det_{n\,k} (X) = 0\;\;\mbox{for all}\;\;X \in \K
\end{equation}
if and only if
\begin{equation}\label{thm:RowVarDetNKZeroCond:eq1}
1 + \sum_{\alpha=1}^{k}\mathbf z_{c(\alpha)} (-1)^{\alpha - c(\alpha)} = 0\quad\mbox{for all}\;\; c \in \binom{[n]}{k}\;\;\mbox{such that}\;\; 1 \not\in c.
\end{equation}
\end{lemma}
\begin{proof}First we prove the necessity and then we prove the sufficiency.

\paragraph{Necessity.} Let $c \in \binom{[n]}{k}$ be such that $1 \not\in c$. Consider $X_{c} \in \M_{n\,k}(\F)$  defined by
\[
X_{c} = \sum_{\alpha=1}^{k} \left(E_{c(\alpha)\,\alpha} + \mathbf z_{c(\alpha)}E_{1\,\alpha}\right).
\]
The matrix $X_{c}$ clearly satisfies the equation $\mathbf z^t X_{c} = 0$ and consequently $X_{c} \in \K$. In addition, for every $c' \in \binom{[n]}{k}$ such that $1 \not\in c'$
\begin{equation}\label{thm:RowVarDetNKZeroCond:eq103}
\det_k(X_{c}[c'|)) =
\begin{cases}
1, & c' = c\\
0, & \mbox{otherwise}
\end{cases}.
\end{equation}
On the one hand, 
\begin{equation}\label{thm:RowVarDetNKZeroCond:eq102}
\det_{n\,k}(X_{c}) = \sum_{\substack{c' \in \binom{[n]}{k}\\1 \not\in c'}} \left(1 + \sum_{\alpha=1}^{k}\mathbf z_{c'(\alpha)} (-1)^{\alpha - c'(\alpha)}\right)\sgn_{[n]}(c')\det_k\Bigl(X_c[c'|)\Bigr)
\end{equation}
by Lemma~\ref{thm:RowVarDetNKZeroExpress}. By substituting~\eqref{thm:RowVarDetNKZeroCond:eq103} into~\eqref{thm:RowVarDetNKZeroCond:eq102} we obtain that
\begin{equation}\label{thm:RowVarDetNKZeroCond:eq104}
\det_{n\,k}(X_{c}) = \left(1 + \sum_{\alpha=1}^{k}\mathbf z_{c(\alpha)} (-1)^{\alpha - c(\alpha)}\right)\sgn_{[n]}(c).
\end{equation}
On the other hand, since $X_{c} \in \K$, then 
\begin{equation}\label{thm:RowVarDetNKZeroCond:eq105}
\det_{n\,k}(X_{c}) = 0.
\end{equation}
Hence, by aligning~\eqref{thm:RowVarDetNKZeroCond:eq104} and~\eqref{thm:RowVarDetNKZeroCond:eq105} together we conclude that
$$\left(1 + \sum_{\alpha=1}^{k}\mathbf z_{c(\alpha)} (-1)^{\alpha - c(\alpha)}\right)\sgn_{[n]}(c) = 0.$$
Dividing this equality by $\sgn_{[n]}(c) \neq 0$ yields the equality
\[
1 + \sum_{\alpha=1}^{k}\mathbf z_{c(\alpha)} (-1)^{\alpha - c(\alpha)} = 0.
\]
Since this equality holds for every $c \in \binom{[n]}{k}$ such that $1 \not\in c$, the \textbf{Necessity} part of the lemma is established.

\paragraph{Sufficiency.} Let $X \in \K$. By Lemma~\ref{thm:RowVarDetNKZeroExpress} we have 
\begin{equation}\label{thm:RowVarDetNKZeroCond:eq101}
\det_{n\,k}(X) = \sum_{\substack{c \in \binom{[n]}{k}\\1 \not\in c}} \left(1 + \sum_{\alpha=1}^{k}\mathbf z_{c(\alpha)} (-1)^{\alpha - c(\alpha)}\right)\sgn_{[n]}(c)\det_k\Bigl(X[c|)\Bigr).
\end{equation}
By substituting~\eqref{thm:RowVarDetNKZeroCond:eq1} into~\eqref{thm:RowVarDetNKZeroCond:eq101} we obtain that
$$
\det_{n\,k}(X) =  \sum_{\substack{c \in \binom{[n]}{k}\\1 \not\in c}} 0\cdot \sgn_{[n]}(c)\det_k\Bigl(X[c|)\Bigr) = 0.
$$
Thus, $\det_{n\,k}(X) = 0$ for all $X \in \K$ which establishes the \textbf{Sufficiency} part of the lemma.
\end{proof}

\begin{theorem}\label{thm:MaxDimKEvenAltSumZero}
Let $n \ge k + 2$ and $\K \subseteq \M_{n\,k}(\mathbb F)$ be a linear variety. Then $\det_{n\,k} (X) = 0$ for all $X \in \K$ and $\codim(\K) = k$ if and only if $k$ is odd and alternating row sum of every $X \in \K$ is equal to zero.
\end{theorem}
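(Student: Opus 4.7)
The plan is to prove both directions, with the forward direction as the harder part.

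For the direction ($\Leftarrow$), I would take $\mathbf z = ((-1)^i)_{i=1}^n$ and set $\K_0 = \{X \in \M_{n\,k}(\F) \mid \mathbf z^t X = 0\}$. Substituting $\mathbf z_{c(\alpha)} = (-1)^{c(\alpha)}$ into the criterion of Lemma~\ref{thm:RowVarDetNKZeroCond} gives
\[
1 + \sum_{\alpha=1}^k \mathbf z_{c(\alpha)}(-1)^{\alpha - c(\alpha)} = 1 + \sum_{\alpha=1}^k (-1)^\alpha,
\]
which vanishes exactly when $k$ is odd; hence $\det_{n\,k}$ vanishes identically on $\K_0$. Since under the hypothesis $\K \subseteq \K_0$ and $\codim(\K_0) = k$, the lower bound $\codim(\K) \ge k$ from Theorem~\ref{thm:ALSDetNKZeroCodimGEK}, combined with Lemma~\ref{lem:ASSubAS}, forces $\K = \K_0$ and gives both conclusions.

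For the direction ($\Rightarrow$), I would first invoke Lemma~\ref{lem:ALSDetNKZeroDimKThenExistVec}\ref{lem:ALSDetNKZeroDimKThenExistVec:P2} to produce a nonzero $\mathbf z \in \F^n$ with $\mathbf z^t X = 0$ for every $X \in \K$. Since Lemma~\ref{thm:RowVarDetNKZeroCond} requires the first coordinate of the annihilating vector to equal $-1$, I would pick $i^\circ$ with $\mathbf z_{i^\circ} \ne 0$ and pass to $\K^\circ = \SCS_{i^\circ}(\K)$. By the invertibility of $\SCS_{i^\circ}$ and properties \ref{SCSprop2}, \ref{SCSprop3} of Lemma~\ref{lem:SCS}, the variety $\K^\circ$ still has codimension $k$, is annihilated by $\det_{n\,k}$, and is annihilated by the vector $\mathbf z^\circ$ from~\eqref{lem:SCS:eq}, whose first coordinate is $\mathbf z_{i^\circ} \ne 0$. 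After rescaling $\mathbf z^\circ$ so that $\mathbf z^\circ_1 = -1$, Lemma~\ref{thm:RowVarDetNKZeroCond} then yields
\[
1 + \sum_{\alpha=1}^k \mathbf z^\circ_{c(\alpha)}(-1)^{\alpha - c(\alpha)} = 0 \qquad \text{for every } c \in \binom{[n]}{k} \text{ with } 1 \notin c.
\]

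The heart of the argument, and what I expect to be the main obstacle, is to extract from these relations that $k$ must be odd and that $\mathbf z^\circ_i = (-1)^i$ for all $i$. The plan is to substitute $w_i := (-1)^i \mathbf z^\circ_i$ so that the relations become $1 + \sum_{\alpha=1}^k (-1)^\alpha w_{c(\alpha)} = 0$, and then, for each $2 \le i < n$, to pair two $k$-subsets of $\{2,\ldots,n\}$ that share the same $k-1$ elements chosen from $\{2,\ldots,n\}\setminus\{i,i+1\}$ and differ only by swapping the consecutive indices $i$ and $i+1$. Because $i$ and $i+1$ then occupy the same position $\alpha$ in the respective natural orderings, subtracting the two relations yields $w_i = w_{i+1}$; the pairing is possible precisely because $n \ge k+2$ leaves $n-3 \ge k-1$ free elements for the shared part. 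Iterating gives a common value $w$ for all $w_i$ with $i \ge 2$, reducing the master relation to $1 + w\sum_{\alpha=1}^k (-1)^\alpha = 0$, which is inconsistent for even $k$ and forces $w = 1$ for odd $k$; together with $\mathbf z^\circ_1 = -1$ this gives $\mathbf z^\circ_i = (-1)^i$ for all $i$, so every $Y \in \K^\circ$ has vanishing alternating row sum. Finally, since $k$ is now known to be odd, Lemma~\ref{lem:SCS}\ref{SCSprop4} transfers this condition from $\K^\circ$ back to $\K$, completing the proof.
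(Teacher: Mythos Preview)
Your forward direction ($\Rightarrow$) is correct and follows the paper's proof almost verbatim: the paper likewise invokes Lemma~\ref{lem:ALSDetNKZeroDimKThenExistVec} to produce $\mathbf z$, reduces via $\SCS_{i^\circ}$ to the normalization $\mathbf z_1=-1$, applies Lemma~\ref{thm:RowVarDetNKZeroCond}, and subtracts the relations for two $k$-subsets of $\{2,\ldots,n\}$ differing only by $\{i\}\leftrightarrow\{i+1\}$ (possible since $n\ge k+2$) to obtain $\mathbf z_i=-\mathbf z_{i+1}$ for $1<i<n$; your substitution $w_i=(-1)^i\mathbf z^\circ_i$ is just a cosmetic repackaging of that step. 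The only difference is the order of operations---the paper isolates the deduction for the normalized $\mathbf z$ as separate claims and applies $\SCS_{i^\circ}$ afterward, while you shift first---but the content is identical.

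There is a slip in your ($\Leftarrow$) argument, however. From $\K\subseteq\K_0$ both Lemma~\ref{lem:ASSubAS}\ref{lem:ASSubAS:part2} and Theorem~\ref{thm:ALSDetNKZeroCodimGEK} yield only the \emph{lower} bound $\codim(\K)\ge k$; neither supplies an upper bound, so you cannot conclude $\codim(\K)=k$ or $\K=\K_0$ (and indeed any proper sub-variety of $\K_0$ would be a counterexample under the literal reading ``alternating row sum of every $X\in\K$ vanishes''). The paper resolves this by reading the right-hand side of the biconditional as $\K=\K_0$, after which $\codim(\K)=k$ is immediate and only your verification of~\eqref{thm:RowVarDetNKZeroCond:eq1} for $\mathbf z_i=(-1)^i$ is needed.
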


\begin{proof}First we prove the necessity and second we prove the sufficiency.
\paragraph{Necessity.} Let $\K$ be a linear variety satisfying the conditions of the lemma. Lemma~\ref{lem:ALSDetNKZeroDimKThenExistVec} implies that there exists $\mathbf z \in \F^n$ such that 
\begin{equation}\label{thm:MaxDimKEvenAltSumZero:eq11}
\mathbf z^t X = \mathbf 0\;\;\mbox{for all}\;\; X \in \K. 
\end{equation}

The proof is built up from the following three claims proven independently for all $n, k$ and $\K$ satisfying the conditions of the lemma:
\begin{enumerate}[label=(\roman*), ref=(\roman*)]
\item\label{thm:MaxDimKEvenAltSumZero:part3} If $\mathbf z_1 = -1$, then 
\begin{equation}\label{thm:MaxDimKEvenAltSumZero:eq15}
\mathbf z_{i} = -\mathbf z_{i+1}\;\;\mbox{for all}\;\;1 < i < n.
\end{equation}
\item\label{thm:MaxDimKEvenAltSumZero:part4} The equality~\eqref{thm:MaxDimKEvenAltSumZero:eq15} implies the statement of this part the lemma.
\item\label{thm:MaxDimKEvenAltSumZero:part2} If the claims~\ref{thm:MaxDimKEvenAltSumZero:part3} and~\ref{thm:MaxDimKEvenAltSumZero:part4} hold, then this part of the lemma holds (we mean the \textbf{Necessity} part).
\end{enumerate}
Thus, this part of the lemma follows from the claims~\ref{thm:MaxDimKEvenAltSumZero:part3}--\ref{thm:MaxDimKEvenAltSumZero:part2}.

\paragraph{Claim~\ref{thm:MaxDimKEvenAltSumZero:part3}.} 
To establish the equality~\eqref{thm:MaxDimKEvenAltSumZero:eq15}, consider any subsequence $m_1, \ldots, m_{k+1}$ of the sequence $2, \ldots, n$ containing both $i$ and $i+1$ (such a sequence exists because $n \ge k+2$). Suppose that $i = m_{l}.$ Then consider  $c_1 = \{m_1,\ldots,\widehat{m_{l+1}},\ldots,m_{k+1}\} \in \binom{[n]}{k}$ and $c_2 = \{m_1,\ldots,\widehat{m_{l}},\ldots,m_{k+1}\} \in \binom{[n]}{k}$. Since $1 \not\in c_1, c_2$, the equalities
\begin{equation}\label{eqexp1}
1 + \sum_{1 \le \alpha \le k}(-1)^{\alpha - c_1(\alpha)} {\mathbf z}_{c_1(\alpha)} = 0\quad\mbox{and}\quad
1 + \sum_{1 \le \alpha \le k}(-1)^{\alpha - c_2(\alpha)} {\mathbf z}_{c_2(\alpha)} = 0
\end{equation}
hold by Lemma~\ref{thm:RowVarDetNKZeroCond}. Since $c_1(\alpha) = c_2(\alpha)$ for all $\alpha \neq l$, the difference of the left-hand sides of equalities in~\eqref{eqexp1} is equal to
\[
(-1)^{l - c_1(l)} {\mathbf z}_{c_1(l)} - (-1)^{l - c_2(l)} {\mathbf z}_{c_2(l)} = (-1)^l\left((-1)^{i} {\mathbf z}_i - (-1)^{i+1}{\mathbf z}_{i+1}\right).
\]
Hence,
\[
{\mathbf z}_i + {\mathbf z}_{i+1} = 0.
\]
This implies the equality~\eqref{thm:MaxDimKEvenAltSumZero:eq15}.

\paragraph{Claim~\ref{thm:MaxDimKEvenAltSumZero:part4}.} 
From \ref{thm:MaxDimKEvenAltSumZero:part3} we conclude that there exists $z \in \F$ such that 
\begin{equation}\label{thm:MaxDimKEvenAltSumZero:eq201}
\mathbf z_i = (-1)^{i}z\;\;\mbox{for all}\;\;i \ge 2.
\end{equation}
Lemma~\ref{thm:RowVarDetNKZeroCond} applied to  $c = \{2, \ldots, k+1\}$ implies that
$1 + \sum_{1 \le \alpha \le k}(-1)^{\alpha + c(\alpha)} \mathbf z_{c(\alpha)} = 0$
and consequently
\begin{equation}\label{thm:MaxDimKEvenAltSumZero:eq202}
\sum_{1 \le \alpha \le k}(-1)^{\alpha + c(\alpha)} \mathbf z_{c(\alpha)} = -1.
\end{equation}
Consider the left-hand side of~\eqref{thm:MaxDimKEvenAltSumZero:eq202}. By the equality~\eqref{thm:MaxDimKEvenAltSumZero:eq201} we obtain that
\begin{equation}\label{thm:MaxDimKEvenAltSumZero:eq203}
\sum_{1 \le \alpha \le k}(-1)^{\alpha + c(\alpha)} \mathbf z_{c(\alpha)} = \sum_{1 \le \alpha \le k}(-1)^{\alpha - \alpha-1} (-1)^{i+1}z = \sum_{1 \le \alpha \le k}(-1)^{i}z.
\end{equation}
Substituting the right-hand side of~\eqref{thm:MaxDimKEvenAltSumZero:eq203} into~\eqref{thm:MaxDimKEvenAltSumZero:eq202} yields
\begin{equation}\label{thm:MaxDimKEvenAltSumZero:eq4}
\sum_{1 \le \alpha \le k}(-1)^{i}z = -1.
\end{equation}

If $k$ is even, we obtain a contradiction because the left part of~\eqref{thm:MaxDimKEvenAltSumZero:eq4} is zero while the right part is nonzero.

If $k$ is odd, then  the left part of~\eqref{thm:MaxDimKEvenAltSumZero:eq4} is equal to $-z$, which implies that $z = 1$. Thus, $\mathbf z_{i} = (-1)^{i}$.

\paragraph{Claim~\ref{thm:MaxDimKEvenAltSumZero:part2}.} Since $\mathbf z \neq 0$, there exists $1 \le i^{\circ} \le n$ such that $\mathbf z_{i^{\circ}} \neq 0$. The equality~\eqref{thm:MaxDimKEvenAltSumZero:eq11} implies that 
\begin{equation}\label{thm:MaxDimKEvenAltSumZero:eq204}
{{\mathbf z}'}^t X = \mathbf 0\;\;\mbox{for all}\;\;X \in \K,
\end{equation}
 where $\mathbf z'$ is defined by
$
\mathbf z'_{i} = -\frac{{{\mathbf z}}_{i}}{{{\mathbf z}}_{i^{\circ}}}\;\;\mbox{for all}\;\; 1 \le i \le n.
$
Additionally, 
\begin{equation}\label{thm:MaxDimKEvenAltSumZero:eq205}
\mathbf z'_{i^{\circ}} = -1
\end{equation}
 by the definition of $\mathbf z'$.

%
 
By Lemma~\ref{lem:SCS}, there exists an invertible linear map $\SCS_{i^{\circ}} \colon \M_{n\, k}(\F) \to \M_{n\,k}(\F)$ having the following properties:
\begin{enumerate}[label=(S\arabic*), ref=(S\arabic*)]
\item
if $X \in \M_{n\,k}(\F)$ and $\mathbf z \in \F^n$ are such that $\mathbf z^t X = 0$, then $\mathbf {z^{\circ}}^t\SCS_{i^{\circ}}(X) = 0$, where 
    \begin{equation*}\mathbf z^{\circ} = (\mathbf z_{i^{\circ}}, \ldots, \mathbf z_n, (-1)^{n+k+1}\mathbf z_1 \ldots, \mathbf (-1)^{n+k+1}\mathbf z_{i^{\circ}-1})^t;
    \end{equation*}
\item
$\det_{n\,k}(X) = 0 \Rightarrow \det_{n\,k}(\SCS_{i^{\circ}}(X)) = 0$;
\item
If $k$ is odd, then 
$$\sum_{i=1}^n (-1)^{i}\left(\SCS_{i^{\circ}}(X)\right)[i|) = \begin{pmatrix}0 & \cdots & 0\end{pmatrix} =\joinrel=\joinrel\Rightarrow \sum_{i=1}^n (-1)^{i}X[i|) = \begin{pmatrix}0 & \cdots & 0\end{pmatrix}$$
for all $X \in \M_{n\,k}(\F).$
\end{enumerate}
Let $K^{\circ}$ be defined by $\K^{\circ} = \SCS_{i^{\circ}}(\K)$. Since $\SCS_{i^{\circ}}$ is an invertible linear map, $\K^{\circ}$ is a linear variety with $\codim(\K^{\circ}) = \codim(\K) = k$. By the property~\ref{SCSprop3} of $\SCS_{i^\circ}$ we have $\det_{n\,k}(X^{\circ}) = 0$ for all $X^{\circ} \in \K^{\circ}.$

It follows from the property~\ref{SCSprop2} of $\SCS_{i^{\circ}}$ and~\eqref{thm:MaxDimKEvenAltSumZero:eq204} that $\mathbf z^{\circ} X^{\circ} = 0$ for all $X^{\circ} \in \K^{\circ}$, where $\mathbf{z}^{\circ}$ is defined by
\begin{equation}\label{thm:MaxDimKEvenAltSumZero:eq206}
\mathbf{z}^{\circ} = (\mathbf z'_{i^{\circ}}, \ldots, \mathbf z'_n, (-1)^{n+k+1}\mathbf z'_1 \ldots, \mathbf (-1)^{n+k+1}\mathbf z'_{i^{\circ}-1})^t.
\end{equation}
The equalities~\eqref{thm:MaxDimKEvenAltSumZero:eq205} and~\eqref{thm:MaxDimKEvenAltSumZero:eq206} imply that $\mathbf{z}^{\circ}_1 = -1.$ Hence, we can apply the claims~\ref{thm:MaxDimKEvenAltSumZero:part3} and~\ref{thm:MaxDimKEvenAltSumZero:part4} to $\K^{\circ}$ and conclude that $k$ is odd and
\begin{equation}\label{thm:MaxDimKEvenAltSumZero:eq101}
\sum_{i=1}^n (-1)^{i}X^{\circ}[i|) = 0\;\;\mbox{for all}\;\; X^{\circ} \in \K^{\circ}.
\end{equation}

Let $X \in \K$. Then $\SCS_{i^{\circ}}(X) \in \K^{\circ}$ and consequently
$
\sum_{i=1}^n (-1)^{i}\left(\SCS_{i^{\circ}}(X)\right)[i|) = 0
$ 
by~\eqref{thm:MaxDimKEvenAltSumZero:eq101}. Hence, $\sum_{i=1}^n (-1)^{i}X[i|) = 0$ by the property~\ref{SCSprop4} of $\SCS_{i^\circ}$. That is, the alternating row sum of every element in $\K$ is equal to zero. This establishes the claim~\ref{thm:MaxDimKEvenAltSumZero:part2}.

\paragraph{Sufficiency.} Assume that $k$ is odd and $\K \subseteq \M_{n\,k}(\F)$ is a vector space consisting of all the matrices such that the alternating sum of their rows is zero. It is clear that $\codim(\K) = k$.

Assume that $X \in \K$. Then $\mathbf z^t X = \mathbf 0$ for $\mathbf z = (-1, 1, \ldots, (-1)^n)^t$ by the definition of $\K$. Lemma~\ref{thm:RowVarDetNKZeroCond} implies that in order to show that $\det_{n\,k}(X)=0$ it is sufficient to check the condition~\eqref{thm:RowVarDetNKZeroCond:eq1} for every $c \in \binom{[n]}{k}$ such that $1 \not\in c$. For every such $c$ we have
\begin{equation*}
1 + \sum_{\alpha=1}^{k}\mathbf z_{c(\alpha)} (-1)^{\alpha - c(\alpha)} = 1 + \sum_{\alpha=1}^{k}(-1)^{c(\alpha)} (-1)^{\alpha - c(\alpha)}
= 1 + \sum_{\alpha=1}^{k}(-1)^{\alpha} = 1 - 1 = 0.
\end{equation*}
Therefore, the condition~\eqref{thm:RowVarDetNKZeroCond:eq1} is satisfied  for every $c \in \binom{[n]}{k}$ such that $1 \not\in c$. This implies that $\det_{n\,k} (X) = 0$ for all $X \in \K.$
\end{proof}

\section{Further work}\label{sec:furtherwork}
First, a natural continuation of this work is to complete the investigation by determining the vector spaces of $n\times k$ matrices of maximal dimension that annihilate $\det_{n,k}$ when $k$ is even. As it is mentioned in the introduction, the only available information is that the codimension of such spaces is strictly larger than $k$. No description of such spaces of maximal dimension is currently known for even $n$, nor is there a strict lower bound for the codimension $k$ of such spaces. 
Any progress in this direction will be interesting.

Second, the found complete solution to the problem of maximal spaces annihilating the Cullis' determinant  could  help to solve linear preserver problem for the Cullis' determinant in the full generality, without any restriction on the ground field, in contrast to how it is done in~\cite{Guterman2025} and~\cite{Guterman2025b}. 


\paragraph{Funding information:} The research of the second author was supported by the scholarship of the Center for Absorption in Science, the Ministry for Absorption of Aliyah, the State of Israel.
\paragraph{Author contributions:} All authors have accepted responsibility for the entire content of this manuscript and
consented to its submission to the journal, reviewed all the results, and approved the final version of the
manuscript.
\paragraph{Conflict of interest:} The authors declare no conflicts of interest.
\paragraph{Data availability statement:} Not applicable.

\bibliographystyle{plain}
\bibliography{cullisgeneral}

\end{document}